\newtheorem{theorem}{Theorem}[subsection]
\newtheorem{corollary}[theorem]{Corollary}
\newtheorem{definition}[theorem]{Definition}
\newtheorem{example}[theorem]{Example}
\newtheorem{lemma}[theorem]{Lemma}
\newtheorem{proposition}[theorem]{Proposition}
\newtheorem{remark}[theorem]{Remark}
\numberwithin{equation}{section}
\begin{document}

\title{Convenient Partial Poisson manifolds}

\author{Fernand PELLETIER $^1$ and Patrick CABAU $^2$}

\date{}
	
\maketitle

\tableofcontents

\begin{abstract}
We introduce the concept of \textit{partial Poisson structure} on a manifold $M$ modelled on a convenient space. This is done by specifying a (weak)
subbundle $T^{\prime}M$ of $T^{\ast}M$ and an antisymmetric morphism $P:T^{\prime}M\rightarrow TM$ such that the bracket $\{f,g\}_{P}=-<df,P(dg)>$
defines a Poisson bracket on  a sub-algebra  $\mathcal{A}$ of the algebra of smooth functions $f$
on $M$ whose differential $df$ induces a section of $T^{\prime}M$. In particular, to each such function $f\in\mathcal{A}$ is associated a Hamiltonian vector field $P(df)$. This notion takes naturally place in the
framework of infinite dimensional weak symplectic manifolds and Lie algebroids. After having defined this concept, we will illustrate it by a lot
of natural examples. We will also consider the particular situations of direct (resp. inverse) limits of such Banach structures. Finally, we will also give some results on the existence of (weak) symplectic foliations naturally associated to some particular
partial Poisson structures.
\end{abstract}

\textbf{MSC 2010}.- Primary 58A30, 18A30, 46T05; secondary 17B66, 37K30, 22E65. \\
\medskip
\textbf{Keywords}.- Poisson partial manifold; convenient structure; integrable distribution; direct limit; inverse limit; almost Lie Banach algebroid; almost Lie bracket; Koszul connection; anchor range.

\medskip
{\footnotesize
\noindent  
$^1$ Univ. Savoie Mont Blanc, CNRS, LAMA, 73000 Chamb\'{e}ry, France\\
fernand.pelletier@univ-smb.fr
\smallskip \\
$^2$ Univ. Savoie Mont Blanc, CNRS, LAMA, 73000 Chamb\'{e}ry, France\\
patrickcabau@yahoo.fr}
\medskip

\section{Introduction}
\label{_Introduction}

The concept of Poisson structure is a fundamental mathematical tool in
Mathematical Physics and classical Mechanics (specially in finite dimensions)
and, in an infinite dimensional context, in Hydrodynamics, Quantum Mechanics,
as a tool for integrating some evolutionary PDEs (for example KdV), .... In
any of these situations, we have an algebra $\mathcal{A}$ of smooth functions
on some manifold $M$ (eventually infinite dimensional) which is provided with
a Poisson bracket, i.e. a Lie bracket $\{\;,\;\}$ which satisfies the Leibniz
property. Moreover, to the derivation $g\mapsto\{f,g\}$ in $\mathcal{A}$, we
can associate a vector field $X_{f}$ on $M$ called the Hamiltonian vector
field of $f$. In infinite dimension, when $M$ is a Banach manifold and
$\mathcal{A}=\mathcal{C}^{\infty}(M)$, such a framework was firstly defined
and studied in a series of papers by A. Odzijewicz, T. Ratiu and their
collaborators (see for instance \cite{OdzRat}); we will see how this context
is included in our presentation.\newline A more recent approach was also
proposed by K.H. Neeb, H. Sahlmann and T. Thiemann (\cite{NeSaTh}) when $M$ is
a smooth manifold modelled on a locally convex topological vector space: the
authors consider a subalgebra $\mathcal{A}$ of $C^{\infty}(M)$ which is
provided with a Poisson bracket and such that the following separation
assumption is satisfied:
\[
\{\;\forall x\in M,\ \forall f\in\mathcal{A},\ d_{x}f(v)=0\}\;\Longrightarrow
\;\{v=0\}
\]
This condition implies that the Hamiltonian field $X_{f}$ is defined for any
$f\in\mathcal{A}$. \newline

Our purpose is to propose, in an infinite dimensional context, a
\textit{Poisson framework} for which the Poisson bracket can be defined for
some particular local or global smooth functions on $M$. \newline 
Essentially we consider:

\begin{enumerate}
\item[--] 
a subalgebra  $\mathcal{A}(M)$ of the algebra smooth functions $f$ on $M$ whose
differential $df$ induces a section of a subbundle of $T^{\prime}M$ of
$T^{\ast}M$;
\item[--] 
a bundle morphism $P:T^{\prime}M\rightarrow TM$ such that
$\{f,g\}_{P}=dg(P(df))$ defines a Poisson bracket on $\mathcal{A}$.
\end{enumerate}

After having given a lot of examples of partial Poisson manifolds, we show
that the subbundle $T^{\prime}M\rightarrow M$ associated to a partial Poisson
manifold can be endowed with a partial Lie algebroid structure and not
necessarly with a classical Lie algebroid structure as it can be done in
finite dimension. \\ 
Finally, in the last section, we look for the existence of a weak symplectic foliation associated to a partial Poisson structure.

\section{Convenient Partial Poisson manifold} \label{_ConvenientPartialPoissonManifold}

\subsection{Convenient framework} \label{__ConvenientFramework}

The convenient setting discovered by A. Fr\"{o}licher and A. Kriegl (cf.
\cite{FroKri}) gives an adapted framework for differentiation in the spaces we
consider here. It coincides with the classical G\^{a}teaux approach on
Fr\'{e}chet spaces.

The references for this section is the tome \cite{KriMic} which includes some
further results and the paper \cite{EgeWur}.

For short, a convenient vector space $E$ is a locally convex topological
vector space (l.c.t.v.s) such that a curve $c:\mathbb{R}\longrightarrow E$ is
smooth if and only if $\lambda\circ c$ is smooth for all continuous linear
functionals $\lambda$ on $E$. We then get a second topology on $E$ which is
the final topology relatively to the set of all smooth curves and called the
$c^{\infty}$-topology. This last topology may be different from the l.c.t.v.s
topology and, for this topology, $E$ cannot be a topological vector space.
However for Fr\'{e}chet (and so Banach) spaces, both topologies coincide. A
map $f:E\rightarrow\mathbb{R}$ is smooth if and only if $f\circ c:\mathbb{R}%
\rightarrow\mathbb{R}$ is a smooth map for any smooth curve $c$ in $E$.
\newline The convenient calculus provides an appropriate extension of
differential calculus to such spaces because, for any $c^{\infty}$-open set
$U$ of a convenient space $E$ and any convenient space $F$, we have the
following properties:

\begin{enumerate}
\item[--] 
the space $C^{\infty}\left(  U,F\right)  $ of smooth maps may be
endowed with a structure of convenient space;
\item[--] 
the differential operator $d:C^{\infty}\left(  U,F\right)
\longrightarrow C^{\infty}\left(  U,L\left(  E,F\right)  \right)  $ defined
by
\[
df\left(  x\right)  .v=\underset{t\longrightarrow0}{\lim}\dfrac{f\left(
x+tv\right)  -f\left(  x\right)  }{t}%
\]
where $L\left(  E,F\right)  $ denotes the space of all bounded (equivalently
smooth) linear mappings from $E$ to $F$, exists and is linear and smooth;
\item[--] 
the chain rule holds.
\end{enumerate}

Therefore the notion of smooth convenient manifold $M$ (\cite{KriMic}, 27)
modelled on a convenient vector space $\mathbb{M}$ is defined in an obvious
way.\\
The notion of weak submanifold is adapted from \cite{Pel} as follows.

\begin{definition}
\label{D_WeakSubmanifold}
A weak submanifold of $M$ is a pair $(N,\varphi)$ where $N$ is a
non necessarily Haussdorf convenient connected  manifold (modelled on a convenient space $F$) and
$\varphi:N\longrightarrow M$ is a conveniently smooth map such that:
\begin{enumerate}
\item[--]
there exists a continuous injective linear map $i:F\longrightarrow E$ (for the structure of l.c.v.s. of $E$)
\item[--]
$\varphi$ is an injective conveniently smooth map and the tangent map
$T_{x}\varphi:T_{x}N\longrightarrow T_{\varphi(x)}M$ is an injective
continuous linear map with closed range  for all $x\in N$.
\end{enumerate}
\end{definition}

The notions of convenient vector bundle (\cite{KriMic}, 29)
and Lie group (\cite{KriMic}, 36) are defined naturally.

\subsection{Partial Poisson manifold} \label{__PartialPoissonManifold}

Let $M$ be a convenient manifold modelled on a convenient space $\mathbb{M}$.
We denote by $:p_{M}:TM\to M$ its kinematic tangent bundle (\cite{KriMic}, 28.12) and by $p_{M}^{\ast}:T^{\ast}M\to M$ its
kinematic cotangent bundle (\cite{KriMic}, 33.1).

\begin{definition}
\label{D_WeakSubbundle}
A vector subbundle $p^{\prime}:T^{\prime} M\to M$ of $p_{M}^{\ast}:T^{\ast}M \to M$ where $p^{\prime}:T^{\prime} M\to M$ is a
convenient bundle is a weak subbundle\index{weak!subbundle} of $p_{M}^{\ast}:T^{\ast}M\to M$
 if the canonical injection $\iota:T^{\prime} M \to T^{\prime}M$ is a convenient bundle morphism.
\end{definition}

Following \cite {KriMic}, 48, for any open set $U$ in $M$ we introduce:

\begin{definition}
\label{D_AU}
Let  $\mathcal{A}(U)$ be the set of smooth functions $f\in C^\infty(U)$ such that each iterated derivative $d^{k}f(x)\in L_{\operatorname{sym}}^{k}(T_{x}M,\mathbb{R})$ ($k \in \mathbb{N^\ast}$) satisfies:
\begin{equation}
\label{eq_dkf}
\forall x \in U, \forall (u_2,\dots,u_k) \in (T_xM)^{k-1},\;
d^{k}_xf(.,u_{2},\dots,u_{k}) \in T_{x}^{\prime}M.
\end{equation}
\end{definition}
\smallskip
\begin{remark}
\label{R_OndefintionAU}${}$
\begin{enumerate}
\item[1.]
If $T^{\prime} M=T^{\ast} M$ then for any open set $U$ in $M$  the algebra $C^\infty (U)$ satisfies the assumption of Definition \ref{D_AU} and so in this case $\mathcal{A}(U)=C^\infty(U)$.
\item[2.] 
Consider $f\in\mathcal{A}(U)$, then for any $x\in U$, any $k\in \mathbb{N}$ any $u_2,\dots,u_k$ in $T_xM$, if $A$ is an endomorphism of $T_xM$ then the linear map  $u\mapsto d_x^kf(A(u),u_2,\dots,u_k)$  belongs to $T_x^\prime M$
\end{enumerate} 
\end{remark}
\smallskip
\begin{proposition}
\label{P_AUalgebra}
Fix any open set  $U$ in $M$,
\begin{enumerate}
\item[1.] 
The set $\mathcal{A}(U)$ is  a subalgebra of $C^\infty(U)$. 
\item[2.] 
For each $k\in \mathbb{N}$ and local vector fields $X_1,\dots ,X_k$ on $U$ the map 
\[
x\mapsto  d^kf(X_1,\dots,X_k)(x) 
\]
belongs to $\mathcal{A}(U)$.
\end{enumerate}
\end{proposition}

\begin{proof} 
It is clear that $\mathcal{A}(U)$ is a real vector subspace of  $C^\infty(U)$.\\

We must show that for any  $f, g\in\mathcal{A}(U)$   then the product $fg$ belongs to $\mathcal{A}(U)$. 
We fix such  $f, g\in\mathcal{A}(U)$. Since $df$ and $dg$ are sections of $T^{\prime} M$ on $U$, it follows $d_x(fg)$ belongs to $T_x^{\prime} M$ for all $x\in U$. More generally for any $k\in \mathbb{N}$, any $x\in U$ and any $u_2,\dots,u_{k}$ in $T_xM$ we have the following  formulae for the derivative 
\[
\begin{array}{cl}
	& d^k_x(fg)(.,u_2,\dots, u_k)	\\
=	& \displaystyle \sum_{l=1}^k
 ( \sum_{\sigma}d_x^lf(u_{\sigma_1},\dots,u_{\sigma_{l}})d^{k-l}_x g(.,u_{\sigma_{l+1}},\dots,u_{\sigma_{k-1}}) \\
 	&  
 	+d_x^lg(u_{\sigma_1},\dots,u_{\sigma_{l}})df^{k-l}_x(.,u_{\sigma_{l+1}},\dots,u_{\sigma_{k-1}})
 )
\end{array}
\]
where the summation on $\sigma$ is for all $(l,k-l-1)$ shuffles $\sigma$ of $\{1,\dots,l,\dots,k-1\}$
 Since $f$ and $g$ belongs to $\mathcal{A}(U)$, it follows that the second member of the previous  relation is a section of $T_x^{\prime} M$ which ends the proof of the first part.\\

Fix some vector fields   $X_1,\dots X_k$ on $U$ and we set $\phi=d^kf(X_1,\dots,X_k)$.

 The differential $d_x^l\phi(u_1,\dots,u_l)$ is a sum  of terms of type 
\begin{equation}\label{eq_Termdkh0f}
d_x^{k+h_0}f(u^{\sigma^0}, X_1^{h_1}(u^{\sigma^1}), \dots X_j^{h_j}(u^{\sigma^j}),\dots X^{h_k}(u^{\sigma^k}))
\end{equation}
with the following notations and constraints:
\begin{enumerate}
\item[$\bullet$]
$h_0+h_1+\cdots+h_k=k+h$ where $h \in \{0,\dots,l-k\}$;
\item[$\bullet$]
for $j \in \{0, \dots,k\}$,
\begin{enumerate}
\item[--]
if $h_j=0$ then $\sigma^j$ is an empty set,
\item[--]
if  $h_j>0$ then $\sigma^j$ is  a strictly increasing sequence $\sigma^j= \left( \sigma^j_1,\dots,\sigma^j_{h_j} \right) $\\
and  $u^{\sigma^j}=(u_{\sigma^j_1},\dots u_{\sigma^j_{h_j}})$;
\end{enumerate}
\item[$\bullet$]
for $j \in \{1, \dots, k\}$,
\begin{enumerate}
\item[--]
if $h_j=0$ then  $X_j^0(u^{\sigma^j})=X_j(x)$ (i.e. $\sigma^j$ is an empty set),
\item[--]
if   $h_j>0$ then $X_j^{h_j}(u^{\sigma^j})=T_x ^{h_j}X_j(u_{\sigma^j_1},\dots ,u_{\sigma^j_{h_j}})$;\\
\end{enumerate}
\item[$\bullet$]
 $\left\{ \{\sigma^j_1,\dots \sigma^j_{h_j}\},\;j \in \{0,\dots k\}, \;\sigma^j\not=\emptyset \right\}$ is a partition of $\{1,\dots, l\}$.
\end{enumerate}

Note that, according to the previous conditions,  a term of type (\ref{eq_Termdkh0f}) is uniquely defined  by the partition 
\[
\left \{\{\sigma^j_1,\dots \sigma^j_{h_j}\},\;{j=0,\dots k}, \sigma^j\not=\emptyset\right\}
\]  
and $d_x^l\phi(u_1,\dots,u_l)$ is the sum of  terms  of type (\ref{eq_Termdkh0f}) for all  such partitions. \\

Now, {\bf if we  delete $u_1$, in $d_x^l\phi(u_1,\dots,u_l)$}, then, each term of type (\ref{eq_Termdkh0f}) will give rise to a term  in one and only one of the  following situations (according to the associated partition): \\

\noindent 
(i) there exists $1\leq i\leq h_0$ with $\sigma^0_i= 1$.  Then in  all terms associated to such a partition,  after having deleted $u_1$, we obtain a term of type  
\[
d_x^{k+h_0}f(u_{\sigma^0_1},\dots,\widehat{u_{\sigma^0_i}},\dots, u_{\sigma^0_{h_0}}, X_1^{h_1}(u^{\sigma^1}), \dots X_j^{h_j}(u^{\sigma^j}),\dots X^{h_k}_k(u^{\sigma^k}))
\]
which  belongs to $T_x^{\prime} M$, according to the characterization of  $\mathcal{A}(U)$ and the symmetry of $d_xf^{k+l}$;\\

\noindent 
(ii) there exists $1\leq j\leq k$ such that some $\sigma^j_i=1$. Therefore, after deleting $u_1$ in $X_j^{\sigma^j}(u^{\sigma^j})$, we obtain $ T_x^{h_j}X_j(u_{\sigma^j_1},\dots,\widehat{u_{\sigma^j_i}},
\dots ,u_{\sigma^j_{h_j}})$ which  defines  an endomorphism of $T_xM$  and  all the other terms $X_i^{h_i}(u^{\sigma^i})$, for $i\not=j$, belong to $T_xM$. Again, by same arguments in (i) and  Remark \ref{R_OndefintionAU}, 2., in all  terms associated to such a partition, the value of    
\[
d_x^{k+h_0}f(u^{\sigma^0}, X_1^{h_1}(u^{\sigma^1}), \dots X_j^{h_j}(u^{\sigma^j}),
\dots X^{h_k}(u^{\sigma^k}))
\]
after deleting  $u_{\sigma^j_i}$,  belongs to $T_x^{\prime} M$.\\

This implies  that $d_x\phi(.,u_2,\dots,u_l)$ belongs to $T_x^{\prime} M$ for any $u_2,\dots, u_l$ in $T_xM$.\\

Since  $d_x^l\phi(u_1,u_2,\dots,u_l)$ is symmetric in $(u_1,\dots,u_l)$, it follows that\\
$d_x^l\phi(u_1,\dots,\widehat{u_i},\dots,u_l)$ also belongs to $T_x^{\prime} M$ for any $i\in \{1,\dots,l\}$. Now,  as $d_x^{k+l}f$ and $T_x^{h_j}X_j$ for $j \in \{1,\dots,k\}$,  are bounded symmetric maps, it follows that $d_x^l\phi$ is also a bounded symmetric map. \\
Finally, such a result is  true  for any integer  $k$ and $l$, any $x\in U$ and  any local vector fields $X_1,\dots,X_k$ on $U$ and so the proof is complete.
\end{proof}


Consider the canonical  bilinear crossing $<\;,\;>$ between $T^{\ast}M$ and $TM$.
\begin{definition}
\label{D_SkewSymmetricMorphism}
A morphism $P:T^{\prime} M\to TM$ is called skew-symmetric\index{skew-symmetric morphism} if it satisfies the relation
\begin{equation}
\label{eq_SkewSymmetry}
<\xi,P(\eta)>=-<\eta,P(\xi)>
\end{equation}
for $\xi$ and $\eta$ of $T_x^{\prime}M$.
We say that $P$ is an almost Poisson anchor.
\end{definition}
Given such a morphism $P$, on $\mathcal{A}(U)$ we define:
\begin{equation}
\label{eq_DualBracket}
\{f,g\}_{P}=-<df,P(dg)>
\end{equation}
In these conditions, the relation (\ref{eq_DualBracket}) defines a skew-symmetric bilinear map 
$\{.,.\}_P:\mathcal{A}(U)\times\mathcal{A}(U)\to
C^\infty (U)$. \\
\begin{lemma}
\label{L_PoissonLeibniz}
The bilinear map $\{.,.\}_{P}$ takes values in
$\mathcal{A}(U)$ and satisfies the Leibniz property:
\[
\{f,gh\}_{P}=g\{f,h\}_{P}+h\{f,g\}_{P} \label{eq_LeibnizPrpopertyOnA(M)}
\]
\end{lemma}
 \begin{proof}
Since $\{f,g\}_P=-df(P(dg))$, for all $f$ and $g$ in $ \mathcal{A}(U)$, from Proposition \ref{P_AUalgebra}, 2., it follows that $df(P(dg))$ belongs to $\mathcal{A}(U)$.\\
Finally, from the definition of the bracket, we have 
\[
\{f,gh\}_P=-<df, Pd(gh)>=-g<df,p(dh)>-h<df,P(dg)>=g\{f,h\}_P+h\{f,g\}_P
\]
and so the proof is complete.
\end{proof}

\begin{definition}
\label{D_PartialPoissonStructure}
Let $p^{\prime}:T^{\prime} M\to M$ be a weak subbundle\index{weak!subbundle} of
$p_{M}^{\ast}:T^{\ast}M\to M$ and $P:T^{\prime} M\to TM$ an almost Poisson anchor.

\begin{enumerate}
\item 
We  say that $(T^{\prime} M,M,P,\{.,.\}_{P})$\index{partialPoissonbracket@$\{.,.\}_{P}$ (partial Poisson bracket)} is a partial Poisson structure\index{partial!Poisson structure}\index{structure!Poisson partial} on $M$  if the bracket $\{.,.\}_{P}$ satisfies the Jacobi identity\index{Jacobi identity}
\[
\{f,\{g,h\}_{P}\}_{P}+\{g,\{h,f\}_{P}\}_{P}+\{h,\{f,g\}_{P}\}_{P}=0.\label{eq_JacobiIdentityPartialPoisson}
\]
In this case $P$ is called a Poisson anchor\index{Poisson anchor}\index{anchor!Poisson}
\item 
Let $\mathcal{A}$ be a subalgebra  of $\mathcal{A}(M)$ such that the restriction of $\{.,.\}_P$ to $\mathcal{A}\times\mathcal{A}$ takes values in $\mathcal{A}$. We say that $\mathfrak{A}$ is a Poisson subalgebra of $\mathfrak{A}(M)$\index{partial!Poisson subalgebra} and $(M,\mathfrak{A},\{.,.\}_{P})$ is a partial Poisson manifold\index{partial!Poisson manifold}
\end{enumerate}
\end{definition}

Note that in particular, if $(T^{\prime} M,M,P,\{.,.\}_{P})$ is a partial Poisson structure, then  $(M,\mathcal{A}(M),\{.,.\}_{P})$  is always a partial Poisson manifold.

\begin{definition}
\label{D_PoissonMap}
Let $(M_1,\mathcal{A}_1,\{.,.\}_{P_1})$ and  $(M_2,\mathcal{A}_2,\{.,.\}_{P_2})$ two partial Poisson manifolds. A smooth map $\phi:M_1\to M_2$ is called a Poisson map\index{Poisson map}\index{map!Poisson} if the induced map $\phi^{\ast}:\mathcal{C}^{\infty}(M_2) 
\to \mathcal{C}^{\infty}(M_1) $,  defined by $\phi^{\ast}(f):=f\circ \phi$, is such that
\[
\left\{
\begin{array}{c}
\varphi^{\ast}(\mathcal{A}_2) \subset \mathcal{A}_1\\
\{\varphi^{\ast}(f),\varphi^{\ast}(g)\}_{P_1}=\varphi
^{\ast}(\{f,g\}_{P_2})
\end{array}
\right.
\]
\end{definition}
\smallskip
If $M$ is a Hilbert (resp. Banach, resp. Fr\'{e}chet) manifold and if the weak
subbundle $T^{\prime} M$ is a Hilbert (resp. Banach, resp. Fr\'{e}chet) bundle,
the partial Poisson manifold $(M,\mathcal{A},\{.,.\}_{P})$ will be called
a partial Poisson Hilbert (resp. Banach, resp Fr\'{e}chet) manifold.\\

As classically, given a partial Poisson manifold $(M,\mathcal{A}
,\{.,.\}_{P})$, any function $f\in\mathcal{A}$ is called a \emph{Hamiltonian}\index{Hamiltonian} and the associated vector field $X_{f}=P(df)$ is called a \emph{Hamiltonian vector field}\index{Hamiltonian vector field}\index{vector field!Hamiltonian}.

We then have $\{f,g\}=X_{f}(g)$ and also $[X_{f},X_{g}]=X_{\{f,g\}}$ (see {NeSaTh}), which is equivalent to
\begin{equation}
P(d\{f,g\})=[P(df),P(dg)] \label{eq_Pdfdg}
\end{equation}

\begin{example}
\label{Ex_FinitePartialPoissonManifold}
{Finite dimensional Poisson manifold.}---
A finite dimensional Poisson manifold $(M,\mathcal{C}^{\infty}(M),\{.,.\})$ (cf. \cite{Marl}) is a particular case of partial Poisson manifold. Indeed, to each function $f$ on $M$ is associated a Hamiltonian vector field $X_{f}$. Since $T^{\prime} M=T^*M$ is locally generated by differential of functions, therefore the map $df\mapsto X_{f}$ extends to a unique skew-symmetric morphism of bundles $P:T^{*}M\to TM$ such that $P(df)=X_{f}$.
\end{example}

\begin{example}
\label{Ex_BanachPoissonManifold}
Banach-Poisson manifold.---
Let $M$ be a Banach manifold. The notion of Banach-Poisson manifold was defined and developed in \cite{OdzRat} and\cite{Rat}. These authors assume that there exists a Poisson bracket
$\{.,.\}$ on $C^{\infty}(M)$ such that to each linear functional
$\xi$ on $M$ is associated a section $\xi^{\prime}$ of the bidual $T^{\prime\prime}M$ which, in fact, belongs to $TM\subset T^{\ast\ast}M$. This gives rise to a skew-symmetric morphism $P:T^{\prime}M\to TM$. Conversely, given
such a morphism, we get a bracket $\{.,.\}$ on $C^{\infty}(M)$ as
given in (\ref{eq_DualBracket}). Therefore, if this bracket satisfies
the Jacobi identity, we get the previous notion of Banach-Lie Poisson manifold
(see \cite{Pel}). Therefore $M$ is endowed with a partial Poisson structure.
\end{example}

\begin{example}
\label{Ex_BanachPoissonLieAlgebroid}
{Banach-Poisson-Lie algebroid.}---
A Banach-Lie algebroid is a Banach bundle $\pi:E\to M$ provided with a morphism $\rho:E\to TM$ (anchor)
and a Lie bracket $[.,.]_{E}$ which is a skew-symmetric bilinear map
$\Gamma(E)\times\Gamma(E)\to\Gamma(E)$ such that
\[
\lbrack X,fY]_{E}=df(\rho(X))Y+f[X,Y]_{E}%
\]
and $\Gamma(E)$ is the set of sections of $\pi:E\to M$ which satisfy the Jacobi identity (see \cite{Ana} and \cite{CabPel1}). \\
If $\pi^\ast:E^{\ast}\to M$ denotes the dual bundle of $\pi:E\to M$, there exists a  Banach subbundle $T^{\prime}E^{\ast}$ of $T^{\ast}E^{\ast}$ defined as follows.\\
For any (local) section $s: U\to E_{| U}$ we denote by $\Phi_s$ the linear map on  $E^\ast_{|U}$ defined by $\Phi_x(\xi)=<\xi, s\circ \pi^\ast (\xi)>$. Then
 for any $\sigma\in E^\ast$  then $T^{\prime}_\sigma E^\ast$ is generated by the set
\[
\{d(\Phi_s+f\circ\pi\ast),\; \; s \textrm { any section of } E_{| U},\;\; f\in C^\infty(U), \;\; U \textrm{ any neighbourhood of } \pi^\ast(\sigma) \}.
\]

Let $\mathcal{A}_L(E^\ast)$ be the set of smooth functions $f:E^\ast \to\mathbb{R}$  whose restriction to each fiber is linear. Clearly $\mathcal{A}_L(E^\ast)$ is contained  in $\mathcal{A}(E^\ast)$.

Now there exists a morphism $P:T^{\prime}E^{\ast}\to TE^{\ast}$ which gives rise to a bracket
$\{.,.\}_{P}$ whose restriction to $\mathcal{A}_L(E^\ast)\times \mathcal{A}_L(E^\ast)$ takes values in $\mathcal{A}_L(E^\ast)$\footnote{such a Poisson bracket is called a Linear Poisson bracket }.  Since the Lie bracket $[.,.]_{E}$ satisfies the Jacobi identity, this implies that the bilinear map $\{.,.\}_P $ also satisfies the Jacobi identity; in this way, we obtain a  Poisson manifold  $(E^\ast,\mathcal{A}_L(E^\ast),\{.,.\}_{P})$ (see \cite{CabPel1} for more details).
\end{example}

\begin{example}
\label{Ex_WeakSymplecticConvenientManifold}
{Weak symplectic convenient manifold.}---
A weak symplectic manifold is a convenient manifold $M$ endowed with a closed $2$-form ${\omega}$ such that
the associated morphism
\[
\begin{array}
[c]{cccc}
{\omega}^{\flat}:   & TM    & \to       & T^{\ast}M\\
                    & X     & \mapsto   & {\omega}(X,.)
\end{array}
\]
is injective (see \cite{KriMic}, 48). Therefore if the range  $T^{\prime} M=\omega^{\flat}(TM)$ is a weak subbundle of $T^{\ast}M$ we have a skew-symmetric morphism $P=(\omega^{\flat})^{-1}:T^{\prime} M\to TM$.
According to \cite{KriMic} $\S$ 48, we have a bracket which satisfies the
Leibniz property on $\mathcal{A}(M)$. Moreover, since $\omega$ is closed, this
bracket satisfies the Jacobi identity (see \cite{KriMic}, Theorem~48.8); so we obtain a partial Poisson structure on $M$.
\end{example}

\begin{remark}
\label{R_MarsdenExample}
If $M$ is a weak symplectic Banach manifold endowed with a symplectic $2$-form $\omega$, then the range  $T^\ast M=\omega^\flat(TM)$ is not always a weak subbundle of $T^*M$ (cf. Marsden example in \cite{Mars}). However, on $M$ we have a subalgebra $\mathcal{A}(M)$ of $C^\infty(M)$ which has  the same characterization as in Definition \ref{D_AU}.  Then in \cite{KriMic}, 48 the authors shows that can be provided with a Lie Poisson bracket. The reader will find a generalization of such a situation in Banach setting in \cite{Tum}.
\end{remark}

\begin{remark}
\label{R_HamiltonianStructureOnSmoothFunctionsOnTheCircle}
One can also find in \cite{Kol} how $C^\infty(\mathbb{S}^1)$ can be endowed with a weaker structure than a partial Poisson one: since the product of two functionals is no well defined (cf. \cite{Olv}, p.~357), the Leibniz rule has no counterpart in this situation.
\end{remark}

\subsection{Partial Lie algebroid and partial Poisson manifold} \label{__PartialLieAlgebroidPartialPoissonManifold}

We begin this section by adapting the concept of almost Banach Lie algebroid
to the convenient setting.\newline

Let $\pi:E\rightarrow M$ be a convenient vector bundle on a convenient
manifold modelled on a convenient space $\mathbb{M}$ whose fiber is modelled
on a convenient space $\mathbb{E}$.

\begin{definition}
\label{D_AnchoredBundle} 
A morphism of vector bundles $\rho:E\rightarrow TM$
is called an \textit{anchor}. The triple $\left(  E,M,\rho\right)  $ is called
an \textit{anchored bundle}.
\end{definition}

Let $\Gamma({E})$ be the $\mathcal{C}^{\infty}(M)$-module of smooth sections
of $E\rightarrow M$. The morphism $\rho$ gives rise to a morphism (again
denoted $\rho$) $\rho:\Gamma(E)\rightarrow\Gamma(TM)=\mathfrak{X}(M)$.

\begin{definition}
\label{D_AlmostLieBracket} An almost Lie bracket on an anchored bundle
$(E,M,\rho)$ is a bilinear map $[\;,\;]_{E}:\Gamma({E})\times\Gamma
({E})\longrightarrow\Gamma({E})$ which satisfies the following properties:
\begin{enumerate}
 \item 
 $[\;,\;]_{E}$ is antisymmetric and depends only on $1$-jets of sections 
 \item
 Leibniz property:
\[
\forall s_{1},s_{2}\in\Gamma({E}),\forall f\in C^{\infty}\left(  M\right)
,\ [s_{1},fs_{2}]_{E}=f.[s_{1},s_{2}]_{E}+df(\rho(s_{1})).s_{2}.
\]
 \end{enumerate} 
The quadruple $(E,M,\rho,[\;,\;]_{E})$ is called an almost algebroid.
\end{definition}

\begin{definition}
Given an almost Lie bracket $[\;,\;]_{E}$, the Jacobiator is the tensororial
map $J_{E}:\Gamma({E})^{3}\rightarrow\Gamma({E})$ defined, for all $s_{1},s_{2},s_{3}\in\Gamma({E})$ by
\[
J_{E}(s_{1},s_{2},s_{3}
)=[s_{1},[s_{2},s_{3}]_{E}]_{E}+[s_{2},[s_{3},s_{1}]_{E}]_{E}+[s_{3},[s_{1},s_{2}]_{E}]_{E}\;\;
\]
A \textit{ }convenient Lie algebroid is an almost Lie algebroid $(E,M,\rho
,[\;,\;]_{E})$ such that the associated Jacobiator $J_{E}$ is vanishes
identically and $\rho$ is a Lie algebra morphism from $\Gamma(E)$ to
$\mathfrak{X}(M)$.\newline
\end{definition}

\begin{proposition}
\label{P_EquivalenceMorphismJEtensor}
Consider a convenient almost Lie algebroid  $\left( E,\pi,M,\rho ,[.,.]_{E} \right) $.
\begin{enumerate}
\item
For any open set $U\subseteq M$  and any $ \left(  s_{1}, s_{2} \right) \in \Gamma\left( E_{ U} \right) ^2$, the map
\[
\left(  s_{1}, s_{2} \right)\mapsto \rho \left( [ s_1, s_2]_E \right) -[\rho( s_1), \rho( s_2)]
\]
only depends on the $1$-jet of $\rho$ at any $x\in U$ and the values of $ s_1$ and $ s_2$ at $x$.
\item
If the  Jacobiator $J_{E_U}$ vanishes identically, then we have:
\begin{equation}
\label{eq_rhoCompatible}
\forall \left(  s_{1}, s_{2} \right) \in \Gamma\left( \mathcal{A}_{ U} \right) ^2,\; \rho \left( [ s_1, s_2]_\mathcal{A} \right) =[\rho( s_1), \rho( s_2)].
\end{equation}
\item
If property (\ref{eq_rhoCompatible}) is true, then $J_{\mathcal{A}_U}$  is a bounded trilinear $C^\infty(U)$ morphism from $ \Gamma\left( \mathcal{A}_{ U} \right)^3$ to  $\Gamma \left( \mathcal{A}_{ U} \right) $ which take values in $\ker \rho$ over $U$.
\end{enumerate}
\end{proposition}

\begin{proof}${}$\\
(1) Since Point (1) is local, we may assume that $M$ is a $c^\infty$-open set in $\mathbb{M}$ and $E=M\times\mathbb{E}$. In this context, according to the Definition \ref{D_AlmostLieBracket}, we have 
    
 \begin{equation}
\label{eq_ExpressionBracketsMathcalA}
[ s_1, s_2]_{\mathcal{A}} =d s_2(\rho( s_1))-d s_1(\rho( s_2))+ C( s_1, s_2).
\end{equation}
On the other hand, the Lie bracket of the vector fields $\rho( s_1)$ and $\rho( s_2)$ is:
\[
[\rho( s_1),\rho( s_2)]= d\rho(\rho( s_1), s_2)-d\rho(\rho( s_2), s_1)+\rho\circ d s_2\left(\rho( s_1)\right)-\rho\circ d s_1\left(\rho( s_2)\right).
\]
We then obtain:
\[
\rho \left( [ s_1, s_2]_E\right) -[\rho( s_1), \rho( s_2)]
=\rho \left( C( s_1, s_2) \right) - d\rho \left( \rho( s_1), s_2 \right) +d\rho \left( \rho( s_2), s_1 \right) .
\]
From the last member, It follows that $\rho \left( [ s_1, s_2]_E \right) -[\rho( s_1), \rho( s_2)]$ depends only on the $1$-jet of $\rho$ at any $x\in U $and the values of $s_1$ and $s_2$
at $x$.

(2) For any triple $\left(  s_{1}, s_{2}, s_{3} \right)  \in \left( \Gamma(\mathcal{A}_{U}) \right) ^3 $ and any $f\in C^\infty(U)$, we have (cf. for instance \cite{DuZu} Lemma 8.1.4):
\begin{equation}\label{Js1s2s3}
\begin{array}
[c]{cl}
J_{E_U}( s_{1}, s_{2},f s_{3})   &=[ s_{1},[ s_{2},f s_{3}]_{E}]_{E}+[ s_{2},[f s_{3}, s_{1}]_E]_{E}+[f s_{3},[ s_{1}, s_{2}]_{E}]_{E}\\
                            &=\rho( s_1)(\rho( s_2)(f)) s_3+\rho( s_2)(f)[ s_1, s_3]_E\\
                            &\;\;\;+\rho( s_1)(f)[ s_2, s_3]_E+f[ s_1,[ s_2, s_3]_E]_E\\
                            &\;\;\;+\rho( s_2)(f)[ s_3, s_1]_E+f[ s_2,[ s_3, s_1]_E]_E\\
                            &\;\;\;+f[ s_3,[ s_1, s_2]_E]_E-\rho([ s_1, s_2]_E)(f) s_3\\
                            &=([\rho( s_1),\rho( s_2)]-\rho([ s_1, s_2]_E))(f) s_3+fJ_{E_U}( s_1,s_2, s_3)
\end{array}
\end{equation}
Assume that $J_{E_U}$ vanishes on $ \Gamma\left( E_{ U} \right)^3$.\\
It follows that
\[
\forall \left(  s_{1}, s_{2} s_{3} \right) \in \left( \Gamma(E_{| U}) \right) ^3,\, \forall f \in  C^\infty(U), \;
J_{E_U}( s_{1}, s_{2},f s_{3})=fJ_{E_U}( s_1, s_2, s_3)
\]
if and only if
\[
\forall \left(  s_{1},  s_{2} \right) \in \Gamma(\mathcal{A}_{U})^2, \forall  s_3 \in \Gamma(E_{U}), \forall f \in C^\infty(U),\;
([\rho( s_1),\rho( s_2)]-\rho([ s_1, s_2]_E)(f) s_3=0
\]
which is equivalent to
\[
\forall \left(  s_{1},  s_{2} \right) \in \Gamma(E_{U})^2, \forall f \in C^\infty(U),\;  ([\rho( s_1),\rho( s_2)]-\rho([ s_1, s_2]_E)(f)=0
\]
which is equivalent to
\[
\forall \left(  s_{1},  s_{2} \right) \in \Gamma(E_{U})^2, \;([\rho( s_1),\rho( s_2)]-\rho([ s_1, s_2]_E)=0.
\]

(3) If the property (\ref{eq_rhoCompatible}) is true, then, from (\ref{Js1s2s3}), we have

$\forall \left(  s_{1}, s_{2}, s_{3} \right) \in \left( \Gamma(E_{| U}) \right) ^3,\, \forall f \in  C^\infty(U), \;
J_{E_U}( s_{1}, s_{2},f s_{3})=fJ_{E_U}( s_1, s_2, s_3)$.\\

But, we  have  $\rho\circ J_{E_U}\equiv 0$ according to the Jacobi property of the Lie bracket of vector fields and property (\ref{eq_rhoCompatible}).
Finally,  on the one hand, from the second member of (\ref{eq_ExpressionBracketsMathcalA}) since the differential is a bounded morphism of convenient space, (cf. \cite{KriMic}, Theorem~3.18) and $x\mapsto C_x(.,.)$ is  a smooth field of a bounded  linear operators and so is a bounded convenient operator from $\Gamma(E_U)^2$ to $\Gamma(E_U)$ and, on the other hand,  from its definition, it follows that $J_{E_U}$ is a convenient  bounded  operator from  $\Gamma(E_U)^3$ to $\Gamma(E_U)$. This completes the proof according to the uniform boundedness  principle  in  \cite{KriMic}, Proposition~30.1.
\end{proof}

Let $ \left( E,\pi,M,\rho \right) $ be a convenient anchored bundle. Given a sheaf $\mathcal{E}_M$ of of subalgebra of the sheaf $C^\infty _M$ of smooth functions on $M$,  let $\mathfrak{P}_M$  be a sheaf of $\mathcal{E}_M$ modules of section of $E$.  Assume that  $\mathfrak{P}_M$ can be provided with a structure of Lie algebras sheaf  which satisfies, for any open set $U$ in $M$:
\begin{description}
\item[--]
the Lie bracket $[.,.]_{\mathfrak{P}(U)}$ on $\mathfrak{P}(U)$ only depends on the $1$-jets of sections of ${\mathfrak{P}(U)}$
\item for any $(s,s')\in \left( \mathfrak{P}(U) \right) ^2$ and any $f\in \mathcal{E}(U)$, we have the compatibility conditions
\begin{eqnarray}
\label{eq_rhoCompatibilitySheaf}
[s,fs']_{\mathfrak{P}(U)}=df(\rho(s))s'+f[s,s']_{\mathfrak{P}(U)}.
\end{eqnarray}
\item[--]
$\rho$ induces  a Lie algebra morphism from  $\mathfrak{P}(U)$ to $\mathfrak{X}(U)$, for any open set $U$ in $M$.\\
\end{description}
Then $ \left( E,\pi, M,\rho,\mathfrak{P}_M \right)$ is called a convenient partial Lie algebroid\index{convenient!partial Lie algebroid}. The family $\{ [.,.]_{\mathfrak{P}(U)}, U \textrm{ open set in }M \}$ is called a sheaf bracket\index{sheaf bracket} and is denoted $[.,.]_E$. \\
A partial convenient Lie algebroid  $(E,\pi, M,\rho,\mathfrak{P}_M)$  is called strong\index{partial!Lie algebroid!strong} \index{strong partial Lie algebroid} if for any $x\in M$, the stalk\index{stalk}
\[
\mathfrak{P}_x=\underrightarrow{\lim}\{ \mathfrak{P}(U),\;\; \varrho^U_V,\;\; U \textrm{ open neighbourhood of } x \}
\]
is equal to $\pi^{-1}(x)$ for any $x\in M$.\\

\begin{remark} 
\label{R_JPPropertiesonPM}  
Let $ \left( E,\pi, M,\rho,\mathfrak{P}_M \right)$ be a convenient partial Lie algebroid. On each open set $U$, we can define the Jacobiator $J_{\mathfrak{P}(U)}$ on sections of $\mathfrak{P}(U)$. Thus, by same arguments in the proof of Proposition \ref{P_EquivalenceMorphismJEtensor}, 3., applied for sections in $\mathfrak{P}(U)$ and functions in $\mathcal{E}(U)$, we have $J_{\mathfrak{P}(U)}(s_1,s_2,f s_3)=f J_{\mathfrak{P}(U)}(s_1,s_2,f s_3)$ for any $s_1,s_2,s_3\in \mathfrak{P}(U)$ and function $f\in \mathcal{E}(U)$.
\end{remark}

This notion of  strong partial Lie algebroid  is justified by the following result:

\begin{proposition}
\label{P_PropertiesPartialPoissonManifold}
Let $(T^{\prime} M,P,\{.,.\}_{P})$ be a partial Poisson structure and we denote by $\mathfrak{P}_M$ the  sheaf of $\mathcal{A}(U)$-modules generated by
the set $\{df,f\in\mathcal{A}(U)\}$. Then we have the following properties:
\begin{enumerate}
\item
We can define a sheaf of almost brackets\index{sheaf!of almost brackets} $[.,.]_{P}$ on the sheaf  $\mathfrak{P}_M$ by:
\begin{equation}
\label{eq_AlmostPoissonBracket}
[\alpha,\beta]_{P}=L_{P(\alpha)}\beta-L_{P(\beta)}\alpha-d<\alpha,P(\beta)>
\end{equation}
for any open set $U$ in $M$ and any section $\alpha$ and $\beta$ in $\mathfrak{P}(U)$ where $L_{X}$ is the Lie derivative. \\
Moreover  $[.,.]_{P}$ satisfies:
\begin{equation}
\label{eq_DiffAlmostPoissonBracket}
\forall \left( f,g \right) \in \left( \mathcal{A}(U) \right) ^2,\; [df,dg]_{P}=d\{f,g\}_{P}
\end{equation}
\item $ \left( \mathfrak{P}_M,[.,.]_{P} \right) $ is a sheaf of Poisson-Lie algebra. In particular,  $(T^{\prime} M,p^{\prime}_M, M, P,\mathfrak{P}_M)$ is a strong  partial convenient  Lie algebroid 
\end{enumerate}
\end{proposition}

\begin{proof} ${}$\\
(1) At first we must show that on an open set $U$,  the bracket $[.,.]_P$ takes values in $\mathfrak{P}(U)$.
Now  any $\alpha\in \mathfrak{P}(U)$ can be written 
$$\alpha=\sum\limits_{i\in I}g_{i}df_{i}$$ where $I$ is a finite set of indexes
and where each $f_{i}$  and $g_i$ belongs to  $\mathcal{A}(U)$.

Recall that, for a smooth functions $f$ and $g$ on $U$, and  vector field $Y$,  we have 
\[
L_{gY}df=df(Y)dg+g\;d^2f(Y,.).
\]
Thus,  from Proposition \ref{P_AUalgebra} if $f$ and $g$ belongs to $\mathcal{A}(U)$, also $df(Y)$ belongs to $\mathcal{A}(U)$  and and $d^2f(Y,.)$ is a section of $T^{\prime} M$ so  $L_{gY}df$ belongs to $\mathfrak{P}(U)$.

It follows that if $\alpha, \beta$ belongs to $\mathfrak{P}(U)$  then  $L_{P(\alpha)}\beta$ belongs to $\mathfrak{P}(U)$. 

Now we have:
$$d<\alpha,P(\beta)>=\sum\limits_{i\in I} dg_{i}(P(\beta))\;df_{i}+g_i\;d<df_i,P(\beta)>$$
But from Proposition \ref{P_AUalgebra}, since $f_i$ belongs to $\mathcal{A}(U)$ then $df_i(P(\beta))$ belongs to $\mathcal{A}(U)$  and $d<df_i, P(\beta)>=d^2f_i(P(\beta,.)$ is a section of $T^{\prime} M$ over $U$. It follows that $ d<\alpha,P(\beta)>$ also belongs to $\mathfrak{P}(U)$. Therefore   $[.,.]_P$ takes values in $\mathfrak{P}(U)$
and   is skew-symmetric and depends only on the $1$-jets of $\alpha$ and $\beta$
The Leibniz property  of $[.,.]_P$ 
is a direct consequence  of the Leibniz property of $\{.,.\}_P$ and  the decomposition of $1$-forms in $\mathfrak{P}_M$.\\
 From the definition of $[.,.]_P$,  we have
\[
[df,dg]_P=d<df, Pdg>-d<dg,Pdf>-d<df,Pdg>=-d <df,Pdg>=d\{f,g\}
\]
which ends the proof of (1).\\

(2) From Point 1,  the bracket (\ref{eq_DiffAlmostPoissonBracket})  takes value in
$\mathfrak{P}(U)$.\\
 Now  we have
\[
\lbrack df,[dg,dh]_{P}]_{P}=d\{f,\{g,h\}_{P}\}_{P}
\]
Since the Poisson bracket satisfies the Jacobi identity, the previous relation implies
\[
J_{P}(df_1,df_2,df_3)=0
\]
  for all $f_2,f_2 ,f_3$ in $\mathcal{A}(U)$.
On the other hand, from (\ref{eq_DiffAlmostPoissonBracket}) and (\ref{eq_Pdfdg})
 it follows that
\begin{equation}
\label{eq_Pdf1df2}
P([df_1,df_2]_P)=[P(df_1),P(df_2)]
\end{equation}
for any $f_1,f_2$ in $\mathcal{A}(U)$.

According to Remark \ref{R_JPPropertiesonPM}, for any functional  linear combination $\displaystyle\sum_{i=1}^n g_idf_i$, where $f_i, g_i$ belongs to $\mathcal{A}(U)$ for $i=1,\dots,n$, we have
$$J_P(dh, dg\displaystyle\sum_{i=1}^n g_idf_i)=\displaystyle\sum_{i=1}^n g_iJ_P( dg,dh, df_i,)=0$$
for any $ g,h\in \mathcal{A}(U)$. From the skew symmetry of $J_P$.\\
this  implies that   $J_P$ vanishes identically in restriction to the module $\mathfrak{P}(U)$. The Leibntiz  property of the almost bracket $[.,.]_P$  on $\mathfrak{P}(U)$ and relation  (\ref{eq_DiffAlmostPoissonBracket}) imply  that the restriction of $P$ to $\mathfrak{P}(U)$ is a Lie algebra morphism. The proof  of (2) will be complete by application of the following Lemma  \ref{L_P(U)generates}.
\end{proof}

\begin{lemma}
\label{L_P(U)generates}
Let $(T^{\prime} M,M,P,\{.,.\}_P)$ be a partial Poisson structure.\\
For each $x\in M$, there exists an open neighbourhood $U$ of $x$ such that the vector space 
$\mathfrak{P}_x(U)=\{d_xf,\; f\in \mathcal{A}(U)\}$ is equal to the fibre $T^{\prime}_xM$.
\end{lemma}

\begin{proof}
Fix some $x\in M$.  Since it is a local problem, according to the assumptions of the definition of a partial Poisson manifold, there exists an open neighbourhood $U$ of $x \in M$ such that over $U$  the bundles $TM$, $T'M$ and $T^{\prime} M$ are trivializable. Thus, without loss of generality, we may assume that $U$ is an open set of $\mathbb{M}$, so that ${TM}_{| U}=U\times\mathbb{M}$,  ${T^\prime M}_{|U}=U\times\mathbb{M}^\prime$,  ${T^{\prime} M}_{| U} =U\times \mathbb{E}$ and if $i : \mathbb{E}\to \mathbb{M}'$ is the natural inclusion then $\iota_{| U}(x,u)=(x,i(u))$. In this situation, any section of $T^{\prime} M$ over $U$ is characterized by a smooth map ${\alpha}:U\to \mathbb{E}\subset \mathbb{M}^\prime$.\\
 Now, choose any $\alpha\in T_x^{\prime} M\subset T^\prime_x M$ and consider $f_\alpha: U\to \mathbb{R}$ defined by $f_\alpha(z)=<\alpha, z>$. Then  $d_xf_\alpha(u)=<\alpha, u>$ for all $u\in U$, so $f_\alpha$ belongs to $\mathcal{A}(U)$  since $d_x^kf_\alpha=0$ for $k\geq 2$ which ends the proof.\\
\end{proof}

\begin{remark}
\label{R_PartialFinite}
 In finite dimension, if $T^{\prime }M=T^{\ast}M$ then
$\mathcal{A}(M)=\mathcal{C}^{\infty}(M)$ and so $\mathcal{M}$ is exactly the
module $\Lambda^{1}(M)$ of $1$-forms on $M$. So we recover the classical
result that, for a finite dimensional Poisson manifold, we obtain a Lie
algebroid structure $(T^{\ast}M,M,P,[\;,\;]_{P})$. \\
In the infinite dimensional case, even if $T^{\prime}M=T^{\ast}M$ and so $\mathcal{A}(M)=\mathcal{C}%
^{\infty}(M)$, this result is not true in general: we only get a partial Lie
algebroid $(\mathcal{M},M,P,[\;,\;]_{P})$.

\end{remark}

\subsection{Almost symplectic foliation associated to a partial Poisson structure  \label{__AlmostSymplecticFoliationAssociatedToPartialPoissonStructure}}

\subsubsection{Preliminaries and notations \label{___PreliminariesNotations}}

Let $M$ be a convenient manifold.

\begin{definition}
\label{D_AlmostSympleticManifold} 
An almost symplectic (convenient) manifold
is a pair $(M,\omega)$ where $\omega$ is a differential $2$-form such that the morphism
\[
\omega^{\flat}:TM\rightarrow T^{\ast}M
\]
defined by $\omega^{\flat}(X)=\omega(X,\;)$ is injective.
\end{definition}

Recall that a \textit{weak symplectic manifold} is an almost symplectic
manifold $(M,\omega)$ such that $\omega$ is closed (cf. Example
\ref{Ex_WeakSymplecticConvenientManifold}).\\

From now on, we fix an almost symplectic manifold $(M,\omega)$. According to
\cite{Vai}, we have:

\begin{definition}
\label{D_HamiltonianVectorFieldAlmostSymplecticManifold} 
A vector field $X$ is called a Hamiltonian vector field\index{Hamiltonian!vector field} for the almost symplectic manifold
$(M,\omega)$ if
\[
L_{X}\omega=0,\;\;i_{X}\omega=-df\text{ for some }f\in{\mathcal{C}}^{\infty
}(M)
\]
where $L_{X}$ is the Lie derivative and $i_{X}$ is the inner product of forms.
\end{definition}

Note that a function $f\in\mathcal{C}^{\infty}(M)$ which satisfies this relation is defined up to a constant. Such a function is called a \emph{Hamiltonian function}\index{Hamiltonian!function} relative to $(M,\omega)$. The set $\mathcal{C}_{\omega}^{\infty}(M)$ of Hamiltonian functions relative to $(M,\omega)$ has
an algebra structure. Moreover, since we have 
\[
i_{[X,Y]}=L_{X}i_{Y}-i_{Y}L_{X}
\]
it follows that, if $X$ and $Y$ are two Hamiltonian fields, then
\[
i_{[X,Y]}\omega=-d\omega(X,Y).
\]
Therefore $[X,Y]$ is also a Hamiltonian field. It follows that $\mathcal{C}%
_{\omega}^{\infty}(M)$ can be endowed with a Lie Poisson bracket defined by
\[
\{f,g\}_{\omega}=\omega([X,Y])
\]
if $i_{X}\omega=-df$ and $i_{Y}\omega=-dg$ (for more details, see \cite{Vai}).
If $\omega$ is closed, then we get the classical Lie Poisson bracket
associated to a weak symplectic manifold (cf. Example
\ref{Ex_WeakSymplecticConvenientManifold}). However when $\omega$ is not
closed, the set $\mathcal{C}_{\omega}^{\infty}(M)$ is in general very small
and it can be reduced to constant functions on $M$ (cf. \cite{Vai} for such examples).

\subsubsection{Almost symplectic foliation of a partial Poisson structure} 
\label{___AlmostSymplecticFoliation}

\begin{definition}
Let $M$ be a convenient manifold.
\begin{enumerate}
\item
A distribution\index{distribution} $\Delta$ on $M$ is an assignment
$\Delta:x\mapsto\Delta_{x}\subset T_{x}M$ on $M$ where $\Delta_{x}$ is a subspace of $T_{x}M$, fibre of the dynamical tangent bundle $TM$ of $M$.
\item
A vector field $X$ on $M$, defined on an open set Dom$(X)$, is called tangent to a distribution\index{tangent!to a distribution} $\Delta$ if $X(x)$ belongs to
$\Delta_{x}$ for all $x\in$Dom$(X)$.
\item
A distribution $\Delta$ on $M$ is called integrable\index{distribution integrable}\index{integrable!distribution} if, for all $x\in M$, there exists a weak submanifold $L$ of $M$ such that $T_{x}L=\Delta_{x}$ for all $x\in L$. In this case, $L$ is called an
integral manifold\index{integral manifold} of $\Delta$ through $x$.
\item
An integral manifold $L$ of a distribution $\Delta$ is called maximal if any integral manifold $L^{\prime}$ of $\Delta$ is an open submanifold of $L$.
\end{enumerate}
\item
If a distribution $\Delta$ is integrable, then the set $\mathcal{F}$ of maximal integral manifolds of $\Delta$ gives rise to a partition of $M$ called a foliation\index{foliation} of $M$.
\end{definition}

Let $(M,\mathcal{A}(M),\{\;,\;\}_{P})$ be a partial Poisson manifold on $M$.
The image ${\Delta}_{x}=P(T_{x}^{\prime}M)$ gives rise to a smooth
distribution ${\Delta}$ on $M$ called \textit{the characteristic
distribution}\textbf{ } of the partial Poisson structure. Note that each
Hamiltonian vector field $P(df)$ of a function $f\in\mathcal{A}(M)$ is tangent
to $\Delta$.\newline On $T^{\prime} M$, we have a natural skew-symmetric
bilinear form $\Omega$ defined as follows:\newline for any $\alpha$ and
$\beta$ in $T_{x}^{\prime}M$, we have $\Omega(\alpha,\beta)=\{f,g\}$ if $f$
and $g$ are smooth functions defined on a neighbourhood of $x$ and such that
$df(x)=\alpha$ and $dg(x)=\beta$ (this definition is independent of the choice
of $f$ and $g$). Note that from (\ref{eq_SkewSymmetry}), we have
\begin{equation}
\Omega(\alpha,\beta)=<\alpha,P(\beta)>=-<\beta,P(\alpha)> \label{Omega}%
\end{equation}
Now, according to (\ref{Omega}), for each $x$, on the quotient $T_{x}^{\prime
}M/\ker P_{x}$ we get a well defined skew-symmetric bilinear form $\hat
{\Omega}_{x}$. On the other hand, let $\hat{P}_{x}:T_{x}^{\prime}M/\ker
P_{x}\rightarrow{\Delta}_{x}$ be the canonical isomorphism associated to
$P_{x}$ between convenient spaces. In this way, we get a skew-symmetric
bilinear form ${\omega}_{x}$ on ${\Delta}_{x}$ such that :
\[
\lbrack\hat{P}_{x}]^{\ast}{\omega}_{x}=\hat{\Omega}_{x}%
\]
Moreover by construction $(\Delta_{x},\omega_{x})$ is a weak symplectic
manifold. Therefore we introduce:

\begin{definition}
Let $(M,\mathcal{A}(M),\{\;,\;\}_{P})$ be a partial Poisson structure on $M$.

\begin{enumerate}
\item An almost symplectic (resp. weak symplectic) leaf\textbf{ } of ${\Delta
}$ is a convenient manifold ${L}\subset M$ with the following properties :

\begin{enumerate}
\item[(i)] ${L}$ is an integral manifold of ${\Delta}$;

\item[(ii)] there exists a $2$-form $\omega_{L}$ on $L$ such that
$(L,\omega_{L})$ is an almost symplectic (resp. weak symplectic) manifold such
that $(\omega_{L})_{x}={\omega}_{x}$ for all $x\in{L}${.}
\end{enumerate}

\item Assume that the characteristic distribution $\Delta$ of the partial
Poisson structure is integrable. If each maximal integral manifold is an
almost symplectic (resp. weak symplectic) leaf, the associated foliation
$\mathcal{F}$ is called an almost symplectic (resp. weak symplectic) foliation.
\end{enumerate}
\end{definition}

We then have the following result:

\begin{proposition}
\label{P_SymplecticFoliation} If the characteristic distribution $\Delta$ of a partial Poisson structure $(M,\mathcal{A}(M),\{\;,\;\}_{P})$ is integrable, then the associated foliation $\mathcal{F}$ is an almost symplectic foliation.
\end{proposition}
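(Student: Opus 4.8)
The plan is to verify the two defining conditions of an almost symplectic leaf for each maximal integral manifold $L$ of $\Delta$, and then invoke the very definition of an almost symplectic foliation. Condition (i) is automatic: $L$ is by hypothesis a maximal integral manifold of the integrable distribution $\Delta$. Hence the entire content lies in condition (ii), namely in producing a genuine smooth $2$-form $\omega_L$ on $L$ whose value at each $x\in L$ is the prescribed bilinear form $\omega_x$ already constructed on $\Delta_x=T_xL$. Once $\omega_L$ is known to be a smooth $2$-form, the almost symplectic condition of Definition \ref{D_AlmostSympleticManifold} follows for free: by construction each $(\Delta_x,\omega_x)$ is a weak symplectic vector space, so $(\omega_L)_x^\flat=\omega_x^\flat$ is injective for every $x$, i.e. $\omega_L^\flat:TL\to T^\ast L$ is injective. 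Thus the whole proof reduces to the smooth assembly of the pointwise forms $x\mapsto\omega_x$.

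First I would record the pointwise picture intrinsically. For $x\in L$ and $\alpha,\beta\in T_x'M$ one has $\omega_x(P_x\alpha,P_x\beta)=\Omega(\alpha,\beta)=\langle\alpha,P_x\beta\rangle$ by \eqref{Omega}; moreover, if $P_x\alpha=0$ then, again by the skew-symmetry in \eqref{Omega}, $\Omega(\alpha,\beta)=-\langle\beta,P_x\alpha\rangle=0$ for all $\beta$, so $\ker P_x$ lies in the radical of $\Omega$ and $\omega_x$ is well defined on $\Delta_x=P_x(T_x'M)=T_xL$. In other words, over $L$ the form $\omega_L$ is nothing but the fibrewise pushforward of the smooth form $\Omega$ on $T'M$ along the bundle morphism $P$, which by integrability maps $T'M|_L$ onto $TL$.

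The substance is then the smoothness of this pushforward, which I would establish locally. Near $x_0\in L$, trivialise the convenient bundle $T'M$ over an open set $U$, say $T'M|_U\cong U\times\mathbb{E}$, and for $v\in\mathbb{E}$ let $s_v$ be the corresponding constant section; then for each $x\in U$ the vectors $\{s_v(x):v\in\mathbb{E}\}$ exhaust $T_x'M$, so $X_v:=P(s_v)$ are smooth vector fields taking values in $\Delta$, hence tangent to $L$, with $\{X_v(x)\}$ spanning $\Delta_x=T_xL$ for every $x\in L\cap U$. On these fields $\omega_L$ is computed by $\omega_L(X_v,X_w)=\Omega(s_v,s_w)=\langle s_v,P(s_w)\rangle$, which is a smooth function because $P$ and the duality pairing are smooth bundle morphisms. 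Since $\omega_L$ is determined by smooth values on a family of smooth sections whose $P$-images span $TL$ pointwise, the convenient smoothness criterion for a tensor field yields that $\omega_L$ is a smooth $2$-form. Equivalently, whenever $P|_L:T'M|_L\to TL$ admits a local smooth right inverse $\sigma$ (as in the split Banach situation) one has $\omega_L=\sigma^\ast\Omega$, a manifestly smooth form independent of $\sigma$ since $\Omega$ annihilates $\ker P$. Restricting to exact sections $s=df$, $s=dg$ with $f,g\in\mathcal{A}(U)$ recovers the familiar formula $\omega_L(X_f,X_g)=\{g,f\}_P$ relating $\omega_L$ to the Poisson bracket \eqref{dualbrac}.

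The main obstacle is precisely this smoothness step in the infinite dimensional setting. The fibrewise kernel $\ker P_x$ need neither have constant rank nor be complemented, so one cannot in general split $T'M|_L$ smoothly along $L$, and a global smooth right inverse of $P|_L$ may fail to exist; this is why I avoid lifting tangent vectors of $L$ back into $T'M$ and instead test smoothness against a $P$-image spanning family of smooth sections of the ambient bundle $T'M$, transporting the already smooth form $\Omega$ rather than inverting $P$. I would also check that the restrictions to $L$ of the ambient fields $P(s_v)$ are smooth for the weak submanifold structure of $L$, which holds because $TL=\Delta|_L$ is a convenient subbundle of $TM|_L$ and each $P(s_v)$ is tangent to $L$. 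Finally, I emphasise that no closedness of $\omega_L$ is claimed, only injectivity of $\omega_L^\flat$, which is exactly why this argument is lighter than the classical symplectic-leaf theorem and yields an almost symplectic, rather than weak symplectic, foliation.
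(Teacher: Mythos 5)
Your construction of $\omega_L$ is in substance the paper's: both define $\omega_L$ as the pushforward of the form $\Omega$ of \eqref{Omega} along $P$, use skew-symmetry to see that $\ker P_x$ lies in the radical of $\Omega_x$ so that the pushforward is well defined on $\Delta_x=T_xL$, and reduce the proposition to the smoothness of $x\mapsto\omega_x$. The one place where you genuinely diverge is the smoothness argument, and that is where your proof has a gap. You propose to conclude smoothness of $\omega_L$ from the fact that $(x,v,w)\mapsto\langle s_v(x),P(s_w(x))\rangle$ is smooth and that the vectors $P(s_v(x))$ span $T_xL$, invoking a ``convenient smoothness criterion for a tensor field.'' No such criterion is available here: in the convenient (or even Banach) setting a fibrewise surjective bundle morphism $P\colon T'M|_L\to TL$ need not admit local smooth right inverses, so smoothness of the composite $(x,v,w)\mapsto\omega_x(P_xv,P_xw)$ on $U\times\mathbb{E}\times\mathbb{E}$ does not by itself yield smoothness of $\omega_L$ as a map on $TL\times_L TL$ --- one still has to lift pairs of tangent vectors to $L$ back into $T'M$ smoothly, which is exactly the step you announce you are avoiding. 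Your fallback $\omega_L=\sigma^\ast\Omega$ for a local smooth right inverse $\sigma$ is correct when such a $\sigma$ exists, but you only assert its existence ``in the split Banach situation,'' which is not among the hypotheses of the proposition.

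The paper handles this point by passing to a quotient: it pulls $p'\colon T'M\to M$ back to $L$, asserts that $\ker\tilde P$ is a convenient subbundle of $\widetilde{T'M}$, obtains a bundle isomorphism $\widehat P\colon\widetilde{T'M}/\ker\tilde P\to TL$, pushes $\Omega$ down to the quotient as $\tilde\omega$ and sets $\omega_L=(\widehat P^{-1})^\ast\tilde\omega$, so that the lifting problem is absorbed into the smoothness of $\widehat P^{-1}$. To complete your version you should either reproduce that quotient-bundle argument or justify the existence of local smooth right inverses of $P|_L$ modulo its kernel. The remainder of your proof --- condition (i), well-definedness of $\omega_x$, the pointwise injectivity of $\omega_L^\flat$, and the identity $\omega_L(X_f,X_g)=\{g,f\}_P$ --- is correct and matches the paper, including the observation that no closedness of $\omega_L$ is claimed.
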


\begin{proof}
Fix some maximal leaf $L$ of the foliation defined by $\Delta=P(T^{\prime} M)$.
We have already seen that, for any $x\in L$, we have a skew-symmetric bilinear
form ${\omega}_{x}$ on ${\Delta}_{x}$. Therefore we must show that the field
$x\mapsto\omega_{x}$ gives rise to an almost symplectic form $\omega_{L}$ on
$L$. If $i:{L}\rightarrow M$ is the natural inclusion, then we can consider
the pull back of $\tilde{p^{\prime}}:\widetilde{T^{\prime} M}\rightarrow L$ of
$p^{\prime}:T^{\prime} M\rightarrow M$ and we have a morphism $\tilde{\imath
}:\widetilde{T^{\prime} M}\rightarrow T^{\prime} M$ over $i$ which is an
isomorphism between each fiber. Therefore the kernel of the morphism
$\tilde{P}=P\circ\tilde{\imath}$ is a convenient subbundle of $\widetilde
{T^{\prime} M}$. Since $P(T_{x}^{\prime}M)=T_{x}L$ for $x\in L$, we obtain a
bundle isomorphism $\widehat{P}:\widetilde{T^{\prime} M}/\ker\tilde
{P}\rightarrow TL$. Moreover $\tilde{\Omega}=\tilde{\imath}^{\ast}\Omega$ is a
smooth skew-symmetric bilinear form on $\widetilde{T^{\prime}M}$ such that
\[
\tilde{\Omega}(\tilde{\alpha},\tilde{\beta})=<\tilde{\imath}(\tilde{\alpha
}),\tilde{P}(\tilde{\beta})>=-<\tilde{\imath}(\tilde{\beta}),\tilde{P}%
(\tilde{\alpha})>
\]
for all sections $\tilde{\alpha}$ and $\tilde{\beta}$ of $\widetilde
{T^{\prime} M}\rightarrow L$. It follows that $\tilde{\Omega}$ induces on the
quotient bundle $\widetilde{T^{\prime} M}/\ker\tilde{P}$ a smooth
skew-symmetric bilinear form $\tilde{\omega}$. Moreover, ${\omega}%
_{L}=({\widehat{P}}^{-1})^{\ast}\tilde{\omega}$ is a smooth skew-symmetric
bilinear form on $L$ such that $(\omega_{L})_{x}={\omega}_{x}$ for all
$x\in{L}$.
\end{proof}

\begin{remark}
\label{R_Leaf} 
In the context of Proposition \ref{P_SymplecticFoliation}, on each leaf
$L$ of $\mathcal{F}$, the associated $2$-form is in general not closed. The
Jacobi Identity   satisfied by the Poisson bracket $\{\;,\;\}_{P}$ implies
$d\omega_{L}(X,Y,Z)=0$, only for Hamiltonian vector fields $X$, $Y$ and $Z$
restricted to $L$. When $M$ is a finite dimensional manifold, then any vector
field $X$ tangent to $L$ is a finite sum of type $\sum\limits_{j\in J}\phi
_{j}X_{f_{j}}$ where each $\phi_{j}$ is a function and $X_{f_{j}}$ is an
hamiltonian vector field; so $\omega_{L}$ is closed as it is well known. But
even in the context of Banach manifolds, the previous argument is no more true
in general. However, when $T^{\prime} M=T^{\ast}M$, any form $\sigma\in
T_{x}^{\prime}M$ can be written $\sigma=d_{x}f$ for some local smooth function
$f$ around $x\in M$ and so, for $i \in \{1,2,3\}$, each $X_{i}\in T_{x}L$ can be written $X_{i}=P(d_{x}f_{i})$ for some $f_{i}$ locally defined around $x$. It
follows that, in this case, each leaf is a weak symplectic leaf.
\end{remark}

\subsubsection{Existence of almost symplectic foliation for Poisson Banach manifolds \label{___ExistenceAlmostSymplecticFoliation}}

We first recall some useful preliminaries which can be found in \cite{Pel}.

Let $\pi:E\rightarrow M$ be a Banach fiber bundle over $M$ with typical fiber
$\mathbb{E}$, and let $\rho:{E}\rightarrow TM$ be a morphism of bundles whose
kernel is supplemented in each fiber and whose range $\Delta_{x}=\operatorname*{im}\rho_{x}$ is closed in $T_{x}M$ for all $x\in M$. We denote
by $\widehat{\Gamma}({E})$ the set of local sections of $\pi:{E}\rightarrow M$, that is smooth maps $\sigma:U\subset M\rightarrow{E}$ such that $\pi
\circ\sigma=Id_{U}$ where $U$ is an open set of $M$. The maximal open set of this type is called the \emph{domain} of $\sigma$ and is denoted
Dom$(\sigma)$.\\
A subset $\mathcal{S}$ of $\widehat{\Gamma}({E})$ is called a \emph{generating set} if, for any $x\in M$, there exists a trivialization
$\Theta:U\times\mathbb{E}\rightarrow E_{|U}$ of $E$ over an open set $U$ which contains $x$
such that $\mathcal{S}$ contains all local sections defined on $U$ of type
$\hat{\alpha}:y\rightarrow\Theta(y,\alpha)$ for all $\alpha\in\mathbb{E}%
$.\\

We say that a generating set $\mathcal{S}$ satisfies the condition \textbf{(LB)} if:
\begin{enumerate}
\item[]
for any local section $\sigma\in\mathcal{S}$, there exists an open set $V\subset$Dom$(\sigma)$ and a trivialization $\Theta:U\times
\mathbb{E}\rightarrow E_{|U}$ such that $\mathcal{S}$ contains all sections
$\hat{\alpha}$ on $U$ for $\alpha\in\mathbb{E}$ and, for any $x\in V$, we have
the following property:\\
given any integral curve $\gamma:]-\varepsilon,\varepsilon\lbrack\rightarrow
V$ of $X=\rho(\sigma)$ with $\gamma(0)=x$, \\
there exists a smooth field
$\Lambda:\left]  -\varepsilon,\varepsilon\right[  \rightarrow L(\mathbb{E}%
,\mathbb{E})$ such that
\[
\forall t\in\left]  -\varepsilon,\varepsilon\right[  ,\forall\alpha
\in\mathbb{E},\ [\rho(\sigma),\rho(\hat{\alpha})](\gamma(t))=\rho
(\gamma(t),\Theta(\gamma(t),\Lambda_{t}(\alpha))\text{ }%
\]
\end{enumerate}
We then have the following result (cf. \cite{Pel}).

\begin{theorem}
\label{T_IntegrableDistribution} 
Let $(E,M,\rho)$ be a Banach anchored bundle such that the kernel of $\rho$ is supplemented in each fibre and whore range ${\Delta}=\operatorname*{im}\rho$ is closed. Then ${\Delta}$ is an integrable
distribution if and only there exists a generating set $\mathcal{S}$ which
satisfies the condition \emph{\textbf{(LB)}}.
\end{theorem}

By application of this theorem, we obtain the following result of
integrability for Banach anchored bundles.

\begin{corollary}
\label{C_IntegrabilityImageAnchor} 
Let $(E,M,\rho)$ be a Banach anchored bundle such that the
kernel of $\rho$ is supplemented in each fiber and whore range ${\Delta
}=\text{Im }\rho$ is closed. Assume that there exists a sheaf $\mathfrak{P}_M$ of $\mathcal{E}_M$-modules of sections of $T^{\prime} M$ such that $(E,M,\rho,\mathfrak{P}_M)$ is a strong partial Lie algebroid.
 Then
${\Delta}=\text{Im }\rho$ is integrable.
\end{corollary}

Under the assumptions of Corollary \ref{C_IntegrabilityImageAnchor}, we have the following local result which is an easy  adaptation of Proposition 2.13 in \cite{Pel}.

\begin{proposition}
\label{P_Slice} 
Let $x\in M$ and consider a local trivialization
$\Theta:U\times\mathbb{E}\rightarrow E_{|U}$ and a Banach
subspace $\mathbb{S}$ such that $\mathbb{E}\equiv E_{x}=\ker\rho_{x}\oplus\mathbb{S}$. For
any $u\in\mathbb{S}$, we denote by $X_{u}$ the vector field $X_{u}%
(x)=\rho\circ\Theta(x,u)$ and by $\phi_{t}^{X_{u}}$ the flow of $X_{u}$. Then
we have:
\begin{enumerate}
\item Given any norm $||\;||$ on $\mathbb{S}$, there exists a ball $B(0,r)$ in
$\mathbb{S}$ such that $\phi^{X_{u}}_{t}$ is defined for all $t\in[0,1]$ and
for all $u\in B(0,r)$.

\item If $\Phi:B\rightarrow M$ is the map defined by $\Phi(u)=\phi_{1}^{X_{u}%
}(x)$ for $u\in B\equiv B(0,r)$, there exists $\delta>0$ such that
$\Phi:B(0,\delta)\rightarrow M$ is a weak injective closed immersion $M$.

\item For $\delta$ small enough, $\Phi(B(0,\delta))$ is an integral Banach
manifold of $\Delta$ through $x$ modelled on the Banach space $\mathbb{S}$
provided with the initial norm $||\;||$.\newline
\end{enumerate}
\end{proposition}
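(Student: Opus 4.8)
The plan is to treat the three items in turn, the main inputs being the scaling behaviour of the fields $X_u$ and the smooth dependence of flows on initial data and parameters in the Banach setting. First I would record that $X_0\equiv 0$: since $\Theta(\cdot,0)$ is the zero section and $\rho$ is fibrewise linear, $X_0=\rho(\Theta(\cdot,0))=0$, so its flow is stationary at $x$ and defined for all $t$. I would then regard $u\mapsto X_u$ as the time-independent field $(u,y)\mapsto(0,X_u(y))$ on $\mathbb{S}\times U$. The domain of its flow through $(0,x)$ is open and contains the compact set $[0,1]\times\{(0,x)\}$, so a standard tube/compactness argument produces $r>0$ for which $\phi_t^{X_u}(x)$ is defined on $[0,1]\times B(0,r)$; this is item (1).

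For item (2), smoothness of $\Phi$ follows from smooth dependence of solutions of the flow equation on parameters. The decisive step is the derivative at $0$. Because $\Theta(y,\cdot)$ is linear, $X_{su}=sX_u$ for every scalar $s$, and rescaling time in the flow gives $\phi_1^{X_{su}}(x)=\phi_s^{X_u}(x)$, hence $\Phi(su)=\phi_s^{X_u}(x)$. Differentiating at $s=0$ yields $d\Phi(0)\cdot u=X_u(x)=\rho_x(\Theta(x,u))$, that is $d\Phi(0)=\rho_x|_{\mathbb{S}}$ under the identification $\mathbb{E}\equiv E_x$. Since $\mathbb{S}$ is a complement of $\ker\rho_x$, the map $\rho_x|_{\mathbb{S}}:\mathbb{S}\to\Delta_x$ is a continuous bijection onto the closed (hence Banach) subspace $\Delta_x$, so by the open mapping theorem it is a toplinear isomorphism. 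Thus $\Phi$ is an immersion at $0$ with closed tangent image $\Delta_x$; shrinking to a ball $B(0,\delta)$ and invoking the weak immersion theorem together with local injectivity gives that $\Phi:B(0,\delta)\to M$ is a weak injective closed immersion.

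For item (3), I would identify the tangent spaces of $\Phi(B(0,\delta))$ with $\Delta$. Each $X_u=\rho(\hat u)$ is tangent to $\Delta=\operatorname{im}\rho$, and under the hypotheses of Corollary \ref{C_IntegrabilityImageAnchor} the distribution $\Delta$ is integrable and its leaves are preserved by the flows of such fields; hence $\Phi(u)=\phi_1^{X_u}(x)$ stays on the leaf through $x$. Via the variational (first-variation) formula, $d\Phi(u)\cdot v$ is a superposition of push-forwards by $d\phi^{X_u}$ of values $X_v$ along the trajectory, all tangent to $\Delta$, and these remain in $\Delta_{\Phi(u)}$ precisely because the flow of $X_u$ preserves $\Delta$. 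Thus $\operatorname{im}d\Phi(u)\subseteq\Delta_{\Phi(u)}$, and since $\Phi$ is an immersion of an $\mathbb{S}$-modelled ball into a manifold whose distribution $\Delta$ is itself modelled on $\mathbb{S}$ through $\rho_x|_{\mathbb{S}}$, equality holds. This exhibits $\Phi(B(0,\delta))$ as an integral Banach manifold of $\Delta$ through $x$ modelled on $(\mathbb{S},||\;||)$.

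The main obstacle I anticipate is controlling $d\Phi(u)$ away from the base point. At $u=0$ surjectivity onto $\Delta_x$ is immediate, but for $u\neq 0$ one must know that the flows of the $X_u$ genuinely preserve $\Delta$; this flow-invariance is the substantive content and is exactly where the Lie-morphism condition on the generating set in Corollary \ref{C_IntegrabilityImageAnchor} (equivalently involutivity of $\Delta$) enters. A secondary technical point is that $\Delta_x$ is assumed only closed, not complemented, so the immersion and closedness in item (2) must be handled through the weak-submanifold machinery rather than the naive Banach immersion theorem.
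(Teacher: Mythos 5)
The paper itself does not prove this proposition: it is quoted from \cite{Pel} (Proposition 2.13 there), so the comparison below is against that argument. Your items (1) and (2) are essentially correct and follow the standard route: openness of the flow domain of the suspended field plus compactness of $[0,1]$ gives (1); the scaling identity $X_{su}=sX_u$ and time-reparametrization give $d\Phi(0)=\rho_x|_{\mathbb{S}}$, which is a toplinear isomorphism onto the closed space $\Delta_x$ by the open mapping theorem, and the usual quantitative perturbation argument (lower bound on $d\Phi(u)$ for $u$ near $0$, mean value inequality for injectivity) yields the weak injective closed immersion. You are also right that the non-complementedness of $\Delta_x$ forces the ``weak'' formulation.

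The genuine gap is in item (3), at the equality $\operatorname{im}d\Phi(u)=\Delta_{\Phi(u)}$ for $u\neq 0$. Your inclusion $\operatorname{im}d\Phi(u)\subseteq\Delta_{\Phi(u)}$ via the first-variation formula and flow-invariance is fine, but the reverse inclusion cannot be deduced from the fact that both spaces are ``modelled on $\mathbb{S}$'': in infinite dimensions a Banach space admits proper closed subspaces isomorphic to itself (e.g.\ $\ell^2$ into $\ell^2$), so an injective closed-range operator $\mathbb{S}\rightarrow\Delta_{\Phi(u)}$ between isomorphic spaces need not be onto. Moreover, that $\Delta_{\Phi(u)}$ is even isomorphic to $\mathbb{S}$ is itself something to be proved. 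The cited proof closes this gap precisely with the condition (LB): along the integral curve $\gamma(t)=\phi^{X_u}_t(x)$ one has $[\rho(\sigma),\rho(\hat{\alpha})](\gamma(t))=\rho(\Theta(\gamma(t),\Lambda_t(\alpha)))$ for a smooth field $\Lambda_t\in L(\mathbb{E},\mathbb{E})$, and solving the associated linear ODE produces a family of operators, close to the identity for small $u$, through which $T\phi^{X_u}_t$ carries $\Delta_x$ \emph{onto} $\Delta_{\gamma(t)}$ and through which $d\Phi(u)$ is seen to be surjective onto $\Delta_{\Phi(u)}$. You correctly sense that flow-invariance is the substantive content, but you use it only for the inclusion, not for the surjectivity where it is indispensable. (An alternative repair — composing $\Phi$ with the inclusion of an already-constructed leaf $L$ and applying the inverse function theorem at $0$ — would require knowing beforehand that $L$ is a Banach manifold modelled on $\Delta_x$ and that $\Phi$ is smooth into $L$, which is exactly what this proposition is designed to establish.)
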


Now let $(M,\mathcal{A}(M),\{\;,\;\}_{P})$ be a partial Banach Poisson
manifold. The following result is essentially an easy consequence of the
previous sufficient condition of integrability.

\begin{theorem}
\label{T_PropertiesFoliationPartialBanachPoissonManifold} Let $(M,\mathcal{A}(M),\{\;,\;\}_{P})$ be a partial Banach Poisson manifold such that the kernel of $P$ is supplemented in each fiber $T^{\prime}_{x}M$ of $T^{\prime} M$ and $P(T^{\prime} M)$ is a closed
distribution. Then we have the following:

\begin{enumerate}
\item $\Delta=P(T{^\prime} M)$ is integrable and the foliation defined by
$\Delta$ is an almost symplectic foliation;

\item On each maximal leaf $N$, if $(N,\omega_{N})$ is the natural almost
symplectic structure on $N$ (cf proof of Proposition \ref{P_SymplecticFoliation}), then the restriction $f_{N}$ of $f\in\mathcal{A}(M)$ belongs to $\mathcal{C}%
_{\omega_{N}}^{\infty}(M)$ and we have, for any $f$ and $g$ in $\mathcal{A}(M)$
\[
{\{f_{|N},g_{|N}\}_{P}}_{|N}=\{f_{N},g_{N}\}_{\omega_{N}}.
\]
\end{enumerate}
\end{theorem}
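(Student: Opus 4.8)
The plan is to read part (1) as a direct application of Corollary \ref{C_IntegrabilityImageAnchor} to the anchored bundle $(T^{\prime}M,M,P)$, and part (2) as a pair of pointwise computations on each leaf that transport the Poisson data onto the almost symplectic form $\omega_N$ constructed in the proof of Proposition \ref{P_SymplecticFoliation}.

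For (1), the standing hypotheses are exactly those of Corollary \ref{C_IntegrabilityImageAnchor} with $\rho=P$: $\ker P$ is supplemented in each fibre and $\Delta=P(T^{\prime}M)$ is closed. By Proposition \ref{P_PropertiesPartialPoissonManifold} the anchored bundle $(T^{\prime}M,M,P)$ carries the almost Lie bracket $[\;,\;]_P$ of (\ref{bracpoisson}), which is localizable by Remark \ref{R_Localizable}; moreover, by (\ref{difbracPoiss}) and (\ref{morphismM}) this bracket closes on the module $\widehat{\mathcal{M}}$ generated by the differentials $df$, $f\in\mathcal{A}(U)$, and there $P$ restricts to a bracket morphism $P([\alpha,\beta]_P)=[P(\alpha),P(\beta)]$. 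What remains, and is the technical heart of the argument, is to exhibit a generating set $\mathcal{S}$ in the sense preceding Theorem \ref{T_IntegrableDistribution} on which both properties hold. Once such an $\mathcal{S}$ is produced, Corollary \ref{C_IntegrabilityImageAnchor} yields integrability of $\Delta$, and Proposition \ref{P_SymplecticFoliation} upgrades the resulting foliation to an almost symplectic one, which is exactly the content of (1).

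For (2), I fix a maximal leaf $N$; since every Hamiltonian field $X_f=P(df)$ is tangent to $\Delta$, its restriction $X_f|_N$ is a vector field on $N$. From the definition of $\omega_N$ via $[\hat P_x]^{\ast}\omega_x=\hat\Omega_x$ together with (\ref{Omega}) (equivalently (\ref{dualbrac})) one computes, for $Y=P(\beta)\in T_xN$, that $\omega_N(X_f,Y)=\Omega(df,\beta)=-\langle df,P(\beta)\rangle=-df_N(Y)$, i.e. $i_{X_f|_N}\omega_N=-df_N$, so $X_f|_N$ is the candidate Hamiltonian field of $f_N$ in the sense of Definition \ref{D_HamiltonianVectorFieldAlmostSymplecticManifold}. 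The bracket identity then drops out of the same computation, since $\{f_N,g_N\}_{\omega_N}=\omega_N(X_f|_N,X_g|_N)=-\langle df,P(dg)\rangle=\{f,g\}_P|_N$. It remains to check $L_{X_f|_N}\omega_N=0$, which would place $f_N$ in $\mathcal{C}^{\infty}_{\omega_N}(N)$. Evaluating on a pair of Hamiltonian fields and using $[X_f,X_g]=X_{\{f,g\}}$ from (\ref{Pdfdg}) together with $\omega_N(X_g,X_h)=\{g,h\}_P$, one finds $(L_{X_f}\omega_N)(X_g,X_h)=\{f,\{g,h\}\}-\{\{f,g\},h\}-\{g,\{f,h\}\}$, which vanishes by the Jacobi identity.

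The main obstacle, shared by both parts, is that this last vanishing is a priori only established on Hamiltonian vectors $P(d_xg)$, whereas $T_xN=\Delta_x=P(T^{\prime}_xM)$; to conclude $L_{X_f}\omega_N=0$ as a $2$-form, and symmetrically to place the generating set of (1) inside $\widehat{\mathcal{M}}$, one needs the differentials of functions in $\mathcal{A}$ to generate $T^{\prime}M$, at least densely modulo $\ker P$. This is precisely where the morphism property holding only on $\widehat{\mathcal{M}}$ rather than on all sections of $T^{\prime}M$ (cf. Remark \ref{R_PartialFinite}) must be supplemented by a genuinely local construction, in the spirit of Proposition \ref{P_Slice}, producing enough exact sections of $T^{\prime}M$ around each point; when such sections do not exhaust $T^{\prime}_xM$ the leaf is only almost, not weakly, symplectic, consistently with Remark \ref{R_Leaf}. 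I expect this local trivialisation of $T^{\prime}M$ by differentials of $\mathcal{A}$-functions, equivalently the construction of an admissible generating set, to be the crux of the proof; the remaining verifications, namely smoothness of the field $\Lambda_t$ in condition (LB) and the Leibniz bookkeeping, are routine and follow the arguments already used for Corollary \ref{C_IntegrabilityImageAnchor}.
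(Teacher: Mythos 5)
Your overall architecture matches the paper's: part (1) is indeed obtained by feeding the anchored bundle $(T^{\prime}M,M,P)$, the localizable almost Lie bracket $[\;,\;]_{P}$ of Proposition \ref{P_PropertiesPartialPoissonManifold}, and the module $\widehat{\mathcal{M}}$ into Corollary \ref{C_IntegrabilityImageAnchor}, and then invoking Proposition \ref{P_SymplecticFoliation}; your computations for part (2) go beyond what the paper writes (it leaves them to the reader) and are essentially sound modulo sign conventions. However, there is a genuine gap: you explicitly defer the one step that constitutes the actual content of the proof, namely the verification that $\widehat{\mathcal{M}}$ \emph{is} a generating set in the sense preceding Theorem \ref{T_IntegrableDistribution}. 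You call this ``the crux'' and say you ``expect'' it to follow from a local construction in the spirit of Proposition \ref{P_Slice}, but Proposition \ref{P_Slice} is about flows of anchored vector fields and is not the relevant device. The missing idea is elementary: take a chart $(U,\phi)$ whose cotangent trivialization $T^{\ast}\phi$ maps $\phi(U)\times\mathbb{F}$ onto $T^{\prime}M_{|U}$ (where $\mathbb{F}\subset\mathbb{M}^{\ast}$ is the typical fibre of $T^{\prime}M$), and for each $\alpha\in\mathbb{F}$ consider the function $f_{\alpha}(y)=\langle\alpha,\phi(y)\rangle$ on $U$. Then $df_{\alpha}=\phi^{\ast}\alpha$ is exactly the constant section $\hat{\alpha}$ of the trivialization, so $f_{\alpha}\in\mathcal{A}(U)$, $\hat{\alpha}\in\widehat{\mathcal{M}}$, and $\widehat{\mathcal{M}}$ is a generating set. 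Combined with the identity $[Z_{\sigma_{1}},Z_{\sigma_{2}}]=P([\sigma_{1},\sigma_{2}]_{P})$ on $\widehat{\mathcal{M}}$ (Remark \ref{R_Localizable}), Corollary \ref{C_IntegrabilityImageAnchor} applies.

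Note that this same two-line construction dissolves the ``main obstacle'' you raise for part (2): since the values $d_{x}f_{\alpha}=\hat{\alpha}(x)$ exhaust $T^{\prime}_{x}M$ as $\alpha$ ranges over $\mathbb{F}$, every vector of $T_{x}N=P(T^{\prime}_{x}M)$ is the value of a local Hamiltonian field $P(df_{\alpha})$, so your evaluation of $L_{X_{f}}\omega_{N}$ and of the bracket identity on triples and pairs of Hamiltonian fields suffices to conclude on all of $TN$. Your worry that exact sections might not exhaust $T^{\prime}_{x}M$ ``modulo $\ker P$'' does not arise here; it is only relevant in the general convenient setting of Remark \ref{R_Leaf}, not under the Banach hypotheses of this theorem, where local triviality of the subbundle $T^{\prime}M$ hands you the required exact sections directly.
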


\begin{proof}
[Proof of Corollary \ref{C_IntegrabilityImageAnchor}] According to Theorem \ref{T_IntegrableDistribution}, we only have to prove that $\mathcal{S}=\mathfrak{P}_M$ satisfies the property \textbf{(LB)}.
For any smooth local section $\sigma:U\rightarrow T^{\prime} M$, we set $Z_{\sigma}=P(\sigma)$. From our assumption, we have for any sections $\sigma_{1},\sigma_{2}\in{\mathcal{S}}$ defined on $U$
\begin{equation}
\lbrack Z{\sigma_{1}},Z_{\sigma_{2}}]=\rho([\sigma_{1},\sigma_{2}]_{E}).\label{liemorph}
\end{equation}
As \textbf{(LB)} is a local property, fix some $\sigma\in\mathcal{S}$ defined on an open set $U$ such that $\mathcal{S}$ contains all local sections defined
on $U$ of type $\hat{\alpha}$ for all $\alpha\in\mathbb{E}$ and for an adequate
trivialization $\Theta:U\times\mathbb{E}\rightarrow E_{|U}$. Such a choice of $U$ is  is always true according to Lemma \ref{L_P(U)generates}. Denote by
$\phi_{t}^{Z_{\sigma}}$ the flow of $Z_{\sigma}$ defined on some open set
$V\subset U$. Consider an integral curve $\gamma(t)=\phi_{t}^{Z_{\sigma}}(z)$
through $z\in V$ defined on $\left]  -\varepsilon,\varepsilon\right[  $. For
any $\alpha\in\mathbb{E}$, from (\ref{liemorph}), we have:
\[
\lbrack Z_{\sigma},Z_{\hat{\alpha}}](\gamma(t))=[\rho(\sigma),\rho(\hat
{\alpha})](\gamma(t))=\rho([\sigma,\hat{\alpha}]_{E})(\gamma(t)))
\]
Now, using the same arguments as the ones used in the proof of Lemma 3.11 of
\cite{Pel}, we can show that the map
\[
y\mapsto(\alpha\mapsto\lbrack\sigma,\hat{\alpha}]_{E}(y))
\]
is a smooth field of continuous endomorphisms of $\mathbb{E}$. It follows that
$\mathcal{S}$ satisfies \textbf{(LB)}, and then, $\Delta$ is integrable.
\end{proof}

\begin{proof}
[Proof of Theorem \ref{T_PropertiesFoliationPartialBanachPoissonManifold}]  From Proposition  \ref{P_PropertiesPartialPoissonManifold},  all assumptions of Corollary \ref{C_IntegrabilityImageAnchor} are satisfied and so $\Delta$ is integrable.\newline 
The last other properties of the foliation defined by $\Delta$ are easy to prove and are left to the reader.
\end{proof}

\section{Direct and inverse limit of partial Poisson Banach manifolds\label{_DirectInverseLimitPartialPoissonBanachManifolds}}

\subsection{Direct and inverse limits of linear bundles} \label{__DirectLimitLinearBundles}
We only recall the results obtained in the framework of direct limits in \cite{CabPel2}. 
For the case of inverse limits, the reader can adapt the results obtained in \cite{DGV}.

\begin{definition}
\label{D_StrongAscendingSequenceBanachVectorBundles} A sequence $\left( E_{n},\pi
_{n},M_{n}\right) _{n\in \mathbb{N}^{\ast }}$ of Banach vector bundles is
called a strong ascending sequence of Banach vector bundles if the following
assumptions are satisfied:
\begin{enumerate}
\item
$\mathcal{M}=(M_{n})_{n\in \mathbb{N}^{\ast }}$ is an ascending sequence
of Banach $C^{\infty }$-manifolds, where $M_{n}$ is modelled on the Banach
space $\mathbb{M}_{n}$ such that $\mathbb{M}_{n}$ is a supplemented Banach
subspace of $\mathbb{M}_{n+1}$ and $(M_{n},\varepsilon _{n}^{n+1})$ is a weak submanifold of $M_{n+1}$;
\item
The sequence $(E_{n})_{n\in \mathbb{N}^{\ast }}$ is an ascending sequence
such that the sequence of typical fibers $\left( \mathbb{E}_{n}\right)
_{n\in \mathbb{N}^{\ast }}$ of $(E_{n})_{n\in \mathbb{N}^{\ast }}$ is an
ascending sequence of Banach spaces such that $\mathbb{E}_{n}$ is a
supplemented Banach subspace of $\mathbb{E}_{n+1}$;
\item
For each $n\in \mathbb{N}^{\ast }$, $\pi _{n+1}\circ \lambda _{n}^{n+1}=\varepsilon _{n}^{n+1}\circ \pi _{n}$ where $\lambda _{n}^{n+1}:E_{n}\longrightarrow E_{n+1}$ is the natural inclusion;
\item
Any $x\in M=\underrightarrow{\lim }M_{n}$ has the direct limit chart
property for $(U=\underrightarrow{\lim }U_{n},\phi =\underrightarrow{\lim }%
\phi _{n})$;
\item
For each $n\in \mathbb{N}^{\ast }$, there exists a trivialization $\Psi
_{n}:\left( \pi _{n}\right) ^{-1}\left( U_{n}\right) \longrightarrow
U_{n}\times \mathbb{E}_{n}$ such that the following diagram is commutative:
\begin{equation*}
\begin{array}{ccc}
\left( \pi _{n}\right) ^{-1}\left( U_{n}\right) & \underrightarrow{\lambda
_{n}^{n+1}} & \left( \pi _{n+1}\right) ^{-1}\left( U_{n+1}\right) \\ 
\Psi _{n}\downarrow &  & \downarrow \Psi _{n+1} \\ 
U_{n}\times \mathbb{E}_{n} & \underrightarrow{\left( \varepsilon
_{n}^{n+1}\times \iota _{n}^{n+1}\right) } & U_{n+1}\times \mathbb{E}_{n+1}.%
\end{array}%
\end{equation*}
\end{enumerate}
\end{definition}

We then have the following result (\cite{CabPel2}, Proposition 41).
\begin{proposition}
\label{P_ConvenientStructureOnDirectLimitLinearBundles}
Let $\left( E_{n},\pi_{n},M_{n}\right) _{n\in \mathbb{N}^{\ast }}$ be a strong ascending sequence
of Banach vector bundles. We have:

\begin{enumerate}
\item
$\underrightarrow{\lim}E_{n}$ has a structure of non necessarily Hausdorff convenient
manifold modelled on the LB-space $\underrightarrow{\lim}\mathbb{M}%
_{n}\times \underrightarrow{\lim}\mathbb{E}_n$ which has a Hausdorff
convenient structure if and only if $M$ is Hausdorff.
\item
$\left( \underrightarrow{\lim }E_{n},\underrightarrow{\lim }\pi _{n},%
\underrightarrow{\lim }M_{n}\right) $ can be endowed with a structure of convenient vector bundle whose typical fiber is $\underrightarrow{\lim }\mathbb{\mathbb{%
E}}_{n}$ and whose structural group is the metrizable complete topological
group $\mathbb{G}(\mathbb{E})$, projective limit of a sequence of Banach-Lie groups.
\end{enumerate}
\end{proposition}

\subsection{Convenient Poisson morphisms}\label{__Preliminaries}

Let $\varepsilon:M_1\to M_2$ be a smooth map between two convenient
manifolds $M_1$ and $M_2$ modelled on $\mathbb{M}_1$ and $\mathbb{M}_2$ respectively. \\
 We consider subbundles $T^{\prime }M_1$ and $T^{\flat
}M_2$ of $T^\ast M_1$ and $T^\ast M_2$ respectively, whose respective  inclusion are convenient bundle morphisms and we denote by $\mathbb{F}_1$ and $\mathbb{F}_2$  respectively their typical fibres.\\

Let $T\varepsilon:TM_1\to TM_2$ be the tangent map of $\varepsilon$.
The adjoint of the bounded operator $T_{x}\varepsilon:T_{x}M_1\to
T_{\varepsilon(x)}M_2$ is denoted by $T_{\varepsilon(x)}^{\ast}
\varepsilon:T_{\varepsilon(x)}^\ast M_2\to T_{x}^\ast M_1$ for all
$x\in M$. If $\varepsilon$ is an injective weak immersion, we can define an adjoint  bundle
morphism, denoted $T^{\ast}\varepsilon$, from $\{T^\ast M_2\}_{|\varepsilon(M_2)}$
to $T^\ast M_1$ by:
\begin{equation}
\label{eq_T*varepsilon}
\left(  T^{\ast}_{y^\ast }\varepsilon \right) (\alpha^{\prime})
=\alpha^{\prime}\circ T_{x}\varepsilon
 \textrm { if }  \varepsilon(x)=y^\prime
 \end{equation}

More generally, such a morphism does not exist, but, for each point $\varepsilon(x)\in M_2$,  the adjoint linear operator $T_x^{\ast}\varepsilon:  T_{\varepsilon(x)}^\ast M_2 \to T_x^\ast M_1$  is characterized  by the  equation (\ref{eq_T*varepsilon}) and so is a  bounded linear operator. Moreover, in our context, the restriction of   $T_x^{\ast}\varepsilon$ to $T_{\epsilon(x)}^{\prime }M_2$  is also  a bounded linear operator from $T_{\epsilon(x)}^{\prime }M_2$  into $T_x^\ast M$ since the inclusion of $T_{\epsilon(x)}^{\prime }M_2$ in $T_{\epsilon(x)}^\ast M_2$ is bounded.\\

{\bf By abuse of notation, we will  simply denote by $T^{\ast}\varepsilon$ the set of  operators $T_{\varepsilon
(x)}^{\ast}\varepsilon$  for $x\in M_1$.}

\begin{remark}
\label{R_Tepsilon*bounded}
Assume that
\begin{equation}
\label{eq_Assomptionvarepsilonx}
T^{\ast}_x\varepsilon(T_{\varepsilon(x)}^{\prime }M_2)\subset
T_{x}^{\prime }M_1.
\end{equation}
Then  the linear operator $T^{\ast}_x\varepsilon_{|T_{\varepsilon(x)}^{\prime }M_2}: T_{\varepsilon(x)}^{\prime }M_2\to T_{x}^{\prime }M_1$ is not bounded  in general  if $T_{x}^{\prime }M_1$  is provided with its own   topology of convenient space. However, this operator is bounded if $T_{x}^{\prime }M_1$ is provided with the topology induced from the topology on $T_x^\ast M_1$. In particular, if $T_{x}^{\prime }M_1$ is closed in $T_x^\ast M_1$, then $T^{\ast}_x\varepsilon_{|T_{\varepsilon(x)}^{\prime }M_2}: T_{\varepsilon(x)}^{\prime }M_2\to T_{x}^{\prime }M_1$ is bounded.
\end{remark}

\begin{definition}
\label{D_VarepsilonCompatibility}
For $i \in \{1,2\}$, let $p^\prime_i:T^{\prime } M_i\to M_i$ be a convenient weak subbundle of $p^\ast_i:T^\ast M_i\to M_i$.
\begin{enumerate}
\item[1.] 
Such  pairs $(T^{\prime } M_i, T^\ast M_i)_{i=1,2}$ are called partial  structures  if moreover $T^{\prime } M_1$ is closed in $T^\ast M_1$.
\item[2.] 
We will say that  a smooth map $\varepsilon:M_1\to M_2$ is  compatible with these partial structures  if, for   all $x\in M$, we have:
\begin{description}
\item[\textbf{(CPS)}]
$
{\hfil
T^{\ast}_x\varepsilon(T_{\varepsilon(x)}^{\prime }M_2)\subset
T_{x}^{\prime }M_1.
}
$
\end{description}
This property will be often cut down by
\begin{equation}
\left(  T^{\ast}\varepsilon\right)  (T^{\prime }M_2)\subset T^{\prime }M_1.
\label{eq_CompatibiltyMorphism}
\end{equation}
\item[3.]  
For $i \in \{1,2\}$, let $P_i:T^{\prime }M_i\to TM_i$  be a Poisson anchor over  $M_i$.
We say that $\varepsilon$ is convenient  Poisson morphism  if $\varepsilon$ satisfies the condition \emph{{\bf (CPS)}}   and the following  condition of compatibility of Poisson structures:
\begin{description}
\item[\textbf{(CPPS)}]
$
{\hfil
P_2(\varepsilon(x),\xi)=T\varepsilon\circ P_1(x,T^{\ast}_x\varepsilon(\xi))
}
$
\end{description}
for all $x\in M$ and $\xi\in T_{\varepsilon(x)}^{\ast}M_2$. \\
This relation
will be often cut down by
\begin{equation}
\label{eq_CompatibilityP}
P_2=T\mathbf{\varepsilon}\circ P_1\circ T^{\ast}{\varepsilon}.
\end{equation}
\end{enumerate}
\end{definition}
\bigskip
The following result  gives relations between convenient Poisson morphisms and convenient  Poisson maps (cf. Definition \ref{D_PoissonMap}) :
\begin{theorem}
\label{T_CharacterizationPoissonMorphism} 
Consider  partial  Poisson structures $(T^{\prime } M_i,TM_i, P_i,\{\;,\;\}_{P_i})$  for $i \in \{1,2\}$ such that  $T^{\prime } M_1$ is closed in $T^\ast M_1$ and $\varepsilon:M_1\to M_2$ is a smooth map.
\begin{enumerate}
\item 
If $\varepsilon$ is a  convenient Poisson morphism, for any open set  $U_1$ in $M_1$ and $U_2$ in $M_2$ such that $\varepsilon (U_1)\subset U_2$,  the restriction of $\varepsilon$  to $U_1$ is  a convenient  Poisson map from $(U_1,\mathcal{A}(U_1), \{.,,\}_{P_1})$ to $(M_2,\mathcal{A}_2, \{.,,\}_{P_2})$
\item If $(T_y^{\prime } M_2)^a$ is  the annihilator of  $(T_y^{\prime } M_2)$  assume that we have $(T_y^{\prime } M_2)^a  \cap T_yM_2=\{0\}$ for all $y\in  \varepsilon(M_1)$. Then conversely,  if for any open set  $U_1$ in $M_1$ and $U_2$ in $M_2$ such that $\varepsilon (U_1)\subset U_2$,  the restriction of $\varepsilon$ to $U_1$ is a convenient  Poisson map from $(U_1,\mathcal{A}(U_1), \{.,,\}_{P_1})$ to $(U_2,\mathcal{A}(U_2), \{.,,\}_{P_2})$ then $\varepsilon$ is a convenient Poisson morphism.\\
\end{enumerate}
\end{theorem}

\begin{remark}
\label{R_Examples}
${}$
\begin{enumerate}
\item[1.] 
If $M_1$ and $M_2$ are finite dimensional Poisson manifolds, the assumption of Point (2) of Theorem \ref{T_CharacterizationPoissonMorphism} is always satisfied and, as it  is  well known,  the assertions  (1) and (2) are equivalent.
\item[2.]  
If $T^{\prime } M_1=T^\ast M_1$ and  $P_2$ is partial symplectic then,  we have a weak symplectic form $\Omega_2$ on $TM_2$  such that $P_2=(\Omega^{\prime })^{-1}$ (cf. Example \ref{Ex_BanachPoissonManifold}). As in \cite{KriMic},48, this implies that the canonical   pairing $<.,.>$ between $T^{\prime } M_2$ and $TM_2$ is non degenerate and thus the assumption of Point (2) in  Theorem \ref{T_CharacterizationPoissonMorphism} are always satisfied and so  the assertions  (1) and (2) are equivalent.
\item[3.] 
In general, there will exist at least one point $y\in \varepsilon(M_1)$ at which   $(T_y^{\prime } M_2)^a\cap T_yM_2\not=\{0\}$ and so the assertions  (1) and (2) are not  equivalent. \\
For instance, it  is always  the case if  $T^{\prime } M_1=T^\ast M_1$, $\mathbb{M}_2$  is reflexive,  $\mathbb{F}_2\not=\mathbb{M}_2^\ast $ and not dense $\mathbb{M}^\ast _2$  and  $\varepsilon$ is a submersion.\\
\end{enumerate}
\end{remark}

 The proof of this Theorem  needs  some auxiliary results which will be developed  now. \\

Consider a chart $(U,\phi)$ of a convenient manifold $M$ and $T\phi
:TM_{|U}=p_{M}^{-1}(U) \to \phi(U) \times \mathbb{M}$ the associated
trivialization of $TM$. Then $T^{\ast}\phi:\phi(U)\times\mathbb{M}^{\prime
}\to T^\ast M_{|U}=(p_{M}^\ast )^{-1}(U)$ is a bundle isomorphism,
and so its inverse morphism $T^{\ast}\phi^{-1}$ is a trivialization
of $T^\ast M$ canonically associated to the chart $(U,\phi)$.\\

Assume that we have the property (\ref{eq_CompatibiltyMorphism}). For any $x\in M_1$, there
exist charts $ \left( U,\phi \right) $ around $x$ in $M_1$ and $ \left( U_2,\phi_2 \right) $
around $\varepsilon(x)$ in $M_2$ respectively such that:

\begin{enumerate}
\item
The restriction $T^{\ast}\phi^{-1}$ and $T^{\ast}{\phi^{\prime}}^{-1}$
to $\{T^{\prime }M_1\}_{|U_1}$ and $\{T^{\prime }M_2\}_{|U_2}$ are
trivializations onto $\phi \left( U_1 \right) \times\mathbb{F}_1$ and $\phi_2 \left( U_2
 \right) \times\mathbb{F}_2$ respectively;

\item
For all $z\in\phi(U)$, we have:
\[
\{ \left( T^{\ast}(\phi_2)\circ\varepsilon\circ\phi_1^{-1} \right) (\phi_2) \circ \varepsilon \circ \phi_1^{-1}(z)\}\times\mathbb{F}_2) \subset \{z\}\times\mathbb{F}_1
\]
\end{enumerate}

Such charts $ \left( U_1,\phi_1 \right) $ and $ \left( U_2,\phi_2\right) $ will be called
\emph{compatible with this Property}.

\begin{proposition}
\label{P_CharacterizationPoissonMorphism}
Consider two partial structures  $(T^{\prime } M_i,T^\ast M_i)$ $i=1,2$ (cf Definition \ref{D_VarepsilonCompatibility} Point 1.)\\
Given a smooth map $\varepsilon:M_1\to M_2$, we have
\begin{enumerate}
\item[]
$\varepsilon^{\ast}(\mathcal{A}(U_2)) \subset \mathcal{A}(U_1)$ for any open set $U_1$ of $M_1$ such that $\varepsilon(U_1)$ is contained in an open set $U_2$  of $M_2$ 
if and only if the property \emph{{\bf (CPS)}} is satisfied.
\end{enumerate}
\end{proposition}

\begin{proof}[Sketch of the proof]
According to Lemma \ref{L_P(U)generates},   the first assertion of   Proposition \ref{P_CharacterizationPoissonMorphism} implies clearly  property ({\bf CPS}). We have to prove the converse.\\

Suppose that property ({\bf CPS}) is satisfied.  Since the problem is local, and according to compatible charts as previously,  we may assume that, for $i \in \{1,2\}$,  $U_i$ is a $c^\infty$ open set in $\mathbb{M}_i$ such that $TM_i=U_i\times \mathbb{M}_i$,  $T^{\prime } M_i=U_i\times \mathbb{F}_i$ and $\varepsilon$ is a smooth map from $U_1\to U_2$ where
\begin{description}
\item[--]
$T_x\varepsilon \left( \{x\}\times \mathbb{M}_1 \right) \subset \{\varepsilon(x)\}\times \mathbb{M}_2$
\item[--]
$T^*_x\varepsilon \left( \{\varepsilon(x)\}\times\mathbb{F}_2 \right) \subset \{x\}\times\mathbb{F}_1$.
\end{description}
Moreover, by assumption, $\mathbb{F}_1$ is closed in $\mathbb{M}_1^\ast $.\\

Since $\varepsilon ^*(f)=f\circ \varepsilon$,  we have
\[
d_x(f\circ \varepsilon)=d_{\varepsilon(x)} f\circ T_x\varepsilon=T^*_x\varepsilon \left( d_{\varepsilon(x)}f \right) 
\]
Since $f$ belongs to $\mathcal{A}(U_2)$, the map $ d_xf$ belongs to $T^{\prime }_{y} M_2$. Thus,  according to relation (\ref{eq_CompatibiltyMorphism}), the previous relation implies that $d_x (f\circ \varepsilon)$ belongs to $T_x^{\prime } M_1$ and is bounded since  $\mathbb{F}_1$ is closed in $\mathbb{M}_1^\ast $.

As in the proof of Proposition \ref{P_AUalgebra}, 2., by induction, we can show that
\[
d^k_x(f\circ\varepsilon) \left( u_1,\dots, u_k \right)
\]
is a summation of terms of type
\begin{equation}
\label {eq_dfTerms}
d^l_{\varepsilon(x)}f 
\Big(
T_x\varepsilon \left( u^{\sigma^1} \right) ,T^2_x\varepsilon \left( u^{\sigma^2} \right) ,\dots,T^i_x\varepsilon \left( u^{\sigma^i} \right) ,\dots, T^{h}_x\varepsilon \left( u^{\sigma^{h}} \right)
\Big))
\end{equation}
where
\begin{description}
\item[$\bullet$] 
the set  $\{\sigma^1,\dots,\sigma^h\}$ is a partition of the set $\{1,\dots,k\}$;
\item[$\bullet$] 
for $\sigma^{i}$, one has:
\begin{description} 
\item[--]
each $\sigma^{i}$ is a disjoint union of $\nu_i$ strictly increasing sequence of length $i$, that is
 $\sigma^i_j:= \Big(  \left( s^i_j \right) _1,\dots, \left( s^i_j \right) _i \Big)$  if $\nu_i\geq 1$;
\item[--]
otherwise $\sigma^i$ is an empty set
\end{description}
\item[$\bullet$]   
for all $1\leq l\leq k$ and $h+l=k$, we have  
\[
\left\lbrace
\begin{array}{rcl}
\nu_1+2\nu_2+\cdots+i\nu_i+\cdots+h\nu_h    &=& k  \\
\nu_1+\cdots+\nu_i+\cdots+\nu_h             &=& l
\end{array}
\right.
\]
\item[$\bullet$] 
If $\nu_i\geq 1$ and  $\sigma^i_j
=
\Big( \left( s^i_j \right) _1,\dots, \left( s^i_j \right)_i \Big) $, for $j \in \{1,\dots,\nu_i\}$, we set 
\[
\begin{array}{rcl}
u_{\sigma^i_j}  &=&     \left( u_{(s^i_j)_1},\dots,u_{(s^i_j)_i} \right)  \\
T^i_x\varepsilon \left( u^{\sigma^i} \right)
                &=&    \Big(  T^i_x\varepsilon \left( u_{\sigma^i_1} \right) \dots, T^i_x\varepsilon \left( u_{\sigma^i_{\nu_i}} \right)  \Big) 
\end{array}
\]

\end{description}
Note  that the term (\ref{eq_dfTerms}) is completely defined  by such a  partition  $\{\sigma^1,\dots,\sigma^h\}$ and the summation in the expression of $d^k_x(f\circ\varepsilon) \left( u_1,\dots, u_k \right)$ is  for all such partitions.\\
For the end of the proof we need the following Lemma:
\begin{lemma}
\label{L_diepsilonast}
Let $\varepsilon$ be a smooth map from a $c^\infty$ open set $U$ around $0$ in convenient space $E_1$ into a convenient space $E_2$.\\
Consider  a convenient space $F_2$ contained in $E_2^\ast $ with   bounded inclusion  and a closed convenient subspace $F_1$  of  $E^\ast _1$.  If the restriction of   $(T_x\varepsilon)^*$   to $F_2$ is a
 bounded linear map from $F_2$ to $F_1$ for all $x \in U$ and all $u_2,\dots, u_n\in E_1$, then  $ \left((T_x^n\varepsilon)(.,u_2,\dots, u_n)\right)^*$ is a bounded linear  map from
  $F_2$ to $F_1$, for  integer $n$ and $x\in U$.\\
\end{lemma}
According to this Lemma,  we fix such a partition $\{\sigma^1,\dots,\sigma^h\}$. Then  $1$ belongs to  one and only one $\sigma^{i_0}_j:=((s^{i_0}_j)_1,\dots,(s^{i_0}_j)_{i_0})$  and  since this sequence is strictly increasing, we  $(s^{i_0}_j)_1=1$. The corresponding   term is
\[
T_x^{i_0}\varepsilon \left( u^{\sigma^{i_0}_j} \right) 
=
T^l_x\varepsilon \left( u_{(s^{i_0}_j)_1},\dots,u_{(s^{i_0}_j)_{i_0}} \right)
\]
After  deleting $u_{(s^l_j)_1}:=u_1$ in this term   we obtain  the bounded linear map
\[
\widehat{\varepsilon_1}:u_1\mapsto T^{i_0}_x\varepsilon(u_1,u_{(s^{i_0}_j)_2},\dots,u_{(s^{i_0}_j)_{i_0}})
\]
By application of Lemma \ref{L_diepsilonast}, the restriction of $\left( \widehat{\varepsilon_1}\right)^*$ to  $\mathbb{F}_2$ is a bounded linear map into $\mathbb{F}_1$.

Associated to this partition, we consider again the corresponding term (\ref{eq_dfTerms})

\begin{equation}
\label{eq_termpertition}
d^l_{\varepsilon(x)}f \Big( T_x\varepsilon \left( u^{\sigma^1} \right) ,T^2_x\varepsilon \left( u^{\sigma^2} \right) ,\dots,T^i_x\varepsilon \left( u^{\sigma^i} \right) ,\dots, T^{h}_x\varepsilon \left( u^{\sigma^{h}} \right) \Big)
 \end{equation}
In this expression,  we have exactly $h$ non zero vectors  $$T_x\varepsilon \left( u^{\sigma^1} \right) ,T^2_x\varepsilon \left( u^{\sigma^2} \right) ,\dots,T^i_x\varepsilon \left( u^{\sigma^i} \right) ,\dots, T^{h}_x\varepsilon \left( u^{\sigma^{h}} \right)$$
which  belong to $T_{\varepsilon(x)}M_1\equiv \mathbb{F}_2$ which we denote by $v_1,\dots, v_h$.
Among these vectors, we have one and only one which is equal to $\widehat{\varepsilon_1}(u_1)$ and we may assume that $v_1=\widehat{\varepsilon_1}(u_1)$;

Finally, after having deleted $u_1$ in (\ref{eq_termpertition}), we obtain a term of type
\[
\widehat{\varepsilon_1}^* \Big( d^l_{\varepsilon(x)}f \left( .,{v_2},\dots,\dots v_h \right) \Big) .
\]
which belongs to $\mathbb{F}_1$ from the construction of $(\varepsilon_1)^*$. But such a  property is true for  any term of the decomposition of $d_x^k(f\circ\varepsilon)(., u_2,\dots, u_k)$ for any $k\in \mathbb{N}$ and any $u_2,\dots, u_k$ in $T_xM_1$.
This implies that  $d_x^k(f\circ\varepsilon)(., u_2,\dots, u_k)$ belongs to $T_xM_1$ for any $k\in \mathbb{N}$ and any $u_2,\dots, u_k$ in $T_xM_1$ and so  the proof is complete, according to Proposition \ref{P_AUalgebra}.
\end{proof}

\begin{proof}[Proof of Lemma \ref{L_diepsilonast}]
Fix some $x\in U$. By assumption the result is true for $n=1$.
 Assume that the result is true for all $1\leq i< n$. Then according to the definition of $d_x^n\varepsilon$ (cf. \cite{KriMic}, 5.11), we have
 \[
 T^n_x\varepsilon(u,u_2,\dots,u_n)
 =
 \lim_{t\rightarrow 0} 
 \Big( 
 \frac{1}{t} \left( T_{x+tu}^{n-1}\varepsilon \left( u_2 ,\dots, u_n \right) 
 - T_x^{n-1}\varepsilon \left( u_2,\dots, u_n \right)
 \right) 
 \Big)
\]
On the other hand, for any $\alpha\in F_2$, we obtain
 \[
 (T^n_x\varepsilon(.,u_2,\dots,u_n))^*(\alpha)=\alpha \circ (T^n_x\varepsilon(.,u_2,\dots,u_n))
 \]
Now the map
\begin{equation}
\label{eq_alphaTxn}
 t\mapsto  \alpha\circ T_{x+tu}^{n-1}\varepsilon(.,u_2,\dots,u_n)
\end{equation}
is a smooth map from  $E^\ast _2$ to $E^\ast _1 $.\\
 Now, since $\alpha $ is a bounded linear form, and so is  smooth,   by chain rules we have:
\[
\displaystyle\frac{d}{dt}_{| t=0}\left(\alpha\circ\left( T_{x+tu}^{n-1}\varepsilon(.,u_2,\dots,u_n)\right)\right)=\alpha \circ \left(T_x^n\varepsilon(.,u,u_2\dots,u_n)\right)
\]
But  the inclusion of $F_2 $ in $E_2$  is bounded, $F_1$ is closed in $E_1^\ast $ and the map (\ref{eq_alphaTxn}) takes values in $F_1 $ from the inductive assumption. This implies that $ \alpha \circ \left(T_x^n\varepsilon(.,u,u_2\dots,u_n)\right)$ belongs  to $F_1$  for any $\alpha\in F_2$. But the map $\alpha\mapsto
\alpha \circ \left(T_x^n\varepsilon(.,u,u_2\dots,u_n)\right)$ is a bounded map from $F_2$ to $E^\ast _1$ which takes values in $F_1$. Since $F_1$ is closed it follows that
this map is a linear bounded map from $F_2$ to $F_1$.
\end{proof}

\begin{proof}[Proof of Theorem \ref{T_CharacterizationPoissonMorphism}]${}$\\
1. Assume that  $\varepsilon$ is a convenient Poisson morphism. \\
 Fix  an open  $U_i$ in $M_i$ such that $\varepsilon(U_1)\subset U_2$. Then according to ({\bf CPS}) and  Proposition \ref{P_CharacterizationPoissonMorphism} we have 
\[
\varepsilon^{\ast}(\mathcal{A}(U_2)) \subset \mathcal{A}(U_1).
\]
 
 Fix some $x\in U_1$. Consider any local section $\alpha_1$ of $T^{\prime }M_1$  and  $\alpha_2$ of $T^{\prime } M_2$) around $x$ and $\varepsilon(x)$ respectively,   such that  $\alpha_2(\varepsilon(x))= (T_x\varepsilon)^{\ast}\alpha_1(x)$.  From condition ({\bf CPPS}) and Lemma \ref{L_P(U)generates}, we have:
$$P_2(\alpha_2(\varepsilon(x))=T_x\varepsilon\circ P_1(\alpha_1(x))$$
 So if  for $i=1,2$, $\beta_i$ are also a section of $T^{\prime }M_i$ such that $\beta_2(\varepsilon(x))= (T_x\varepsilon)^{\ast}\beta_1(x)$, we will have
\begin{eqnarray}\label{E_Relationphi}
<\beta_2, P_2(\alpha_2)>\varepsilon(x)=<(T_x\varepsilon)^{*}(\beta_1(x)), T_x\varepsilon\circ P_1(\alpha_1(x))
\end{eqnarray}
 But if  $f$ and $g$ in  $\mathcal{E}(U_2)$, we have: 
\begin{eqnarray}\label{E_PoissonBracket}
\{f,g\}_{P_2}=-<df, P_2(dg)>
\end{eqnarray}
For  $\alpha_1(x)=d (g\circ \varepsilon)(x)=(T_x\varepsilon)^{\ast}dg(x)$ and $\beta_1(x)=d (f\circ \varepsilon)(x)=(T_x\varepsilon)^{\ast}df(x)$  in (\ref{E_Relationphi}) we get the announced result at point $x$ according to (\ref{E_Relationphi}). Since this true for any $x\in U$, this ends the proof.\\

2. We consider the assumptions of Point (2).\\
Fix some point $x\in M_1$ and let $U_2$ be an open set around $\varepsilon(x)$ in $M_2$ such that set $\{d_{\varepsilon(x)}f, f\in \mathcal{E}(U_2)\}$ is equal to
$T_{\varepsilon(x)}^{\prime }M_2$ (cf.  Lemma \ref{L_P(U)generates}).  Since  $d_x(f\circ \varepsilon)=(T^{\ast}_x\varepsilon)(df)$,  it follows that  $d_x(f\circ \varepsilon)$ belongs to  $(T_{x}^{\ast} \varepsilon)(T^{\prime }_{\varepsilon(x)}M_2)$ and so  the set 
$\{d_x(f\circ \varepsilon)),  f\in \mathcal{E}(U_2)\}$  
generates  $(T_{x}^{\ast} \varepsilon)(T^{\prime }_{\varepsilon(x)}M_2)$. Thus condition ({\bf CPS}) is satisfied.\\
 
 According to the definition of the Poisson bracket from a Poisson anchor and  Poisson map   we have 
$$<d_{\varepsilon(x)} g, T_x\varepsilon\circ P_1\circ(T_x^{\ast}\varepsilon)(d_{ \varepsilon(x)}f)>=<d_{\varepsilon(x)}g, P_2(d_{\varepsilon(x)}f)>$$
for all $f, g\in \mathcal{A}(U_2)$. Since  the set $\{d_{\varepsilon(x)}g,\; g\in \mathcal{A}(U_2)\}$ generates $T_{\varepsilon(x)}^{\prime } M_2$,  this implies that that the range of
$T_x\varepsilon\circ P_1\circ(T^{\ast}_x\varepsilon)(d_{\varepsilon(x)}f)-P_2(d_{\varepsilon(x)}f)$ is contained in $(T_{\varepsilon(x)}^{\prime } M_2)^a\cap T_{\varepsilon(x)}M_2$. But by assumption this vector space is reduced to $\{0\}$. This implies  that 
 the condition ({\bf CPPS}) is also satisfied, which ends the proof.\\
\end{proof}

\begin{remark}
\label{R_CharacterizationPoissonMorphism}

When $\varepsilon$ is a Poisson map between two partial Poisson manifolds $(M_1,\mathcal{A}_1,\{.,.\}_{P_1})$ and $(M_2,\mathcal{A}_2,\{.,.\}_{P_2})$, then we have the following relation
between Hamiltonian fields:
\[
X_{\varepsilon^{\ast}f}=T\varepsilon(X_{f})
\]
\end{remark}

\subsection{Partial Poisson structures on direct and inverse limits\label{__PartialPoissonStructuresOnDirectInverseLimits}}

In this section $\{(M_{i},C_{P_i}^{\infty}(M_i),\{\;,\;\}_{P_{i}})\}_{i\in
\mathbb{N}}$ is a sequence of partial Poisson Banach manifolds where
$p_{i}^{\prime}:T^{\prime}M_{i}\rightarrow M_{i}$ is a Banach subbundle of
$p_{M_{i}}^{\ast}:T^{\ast}M_{i}\rightarrow M_{i}$ and $P_{i}:T^{\prime}%
M_{i}\rightarrow TM_{i}$ is a skew-symmetric morphism. We denote by
$\mathbb{M}_{i}$ the Banach space on which $M_{i}$ is modelled, and by
$\mathbb{F}_{i}$ the model of the typical fiber of $p_{i}^{\prime}:T^{\prime
}M_{i}\rightarrow M_{i}$ and we assume that $\mathbb{F}_{i}$ is a Banach
subspace of the dual $\mathbb{M}_{i}^{\ast}$ of $\mathbb{M}_{i}$.

\begin{definition}
\label{D_DirectSequencePartialPoissonBanachManifolds}
${}$
\begin{enumerate}
\item 
The sequence $\{(M_{i},\mathcal{A}(M_i),\{\;,\;\}_{P_{i}}%
)\}_{i\in\mathbb{N}}$ is called a direct sequence of partial Poisson Banach
manifolds if $(M_{i})_{i \in \mathbb{N}^{\ast }}$ is an ascending sequence
of Banach $C^{\infty }$-manifolds, where $M_{i}$ is modelled on the Banach
space $\mathbb{M}_{i}$ such that $\mathbb{M}_{i}$ is a supplemented Banach
subspace of $\mathbb{M}_{i+1}$ and such that $(M_{i},\varepsilon_{i}$ is a weak submanifold of $M_{i+1}$, and, for all $i \in\mathbb{N}$, we have the following properties:
\begin{description}
\item[(i)]
$T^{\ast}\epsilon_{i}(T^{\prime}M_{i+1})\subset T^{\prime}M_{i}$;

\item[(ii)]
$P_{i+1}=T\epsilon_{i}\circ P_{i}\circ T^{\ast}\epsilon_{i}$;

\item[(iii)]
Around each $x\in M$, there exists a sequence of charts $\{\left(
U_{i},\phi_{i}\right)  \}_{i\in\mathbb{N}}$ such that\newline%
$(U=\underrightarrow{\lim}(U_{i}),\phi=\underrightarrow{\lim}(\phi_{i}))$ is a
chart of $x$ in $M$, so that the charts $(U_{i},\phi_{i})$ and $(U_{i+1}%
,\phi_{i+1})$ are compatible with Property (i).
\end{description}

\item The sequence $\{(M_{i},\mathcal{A}(M_i),\{\;,\;\}_{P_{i}})\}_{i\in
\mathbb{N}}$ is called an inverse sequence of partial Poisson Banach manifolds
if there exist submersions $\tau_{i}:M_{i+1}\rightarrow M_{i}$ such that
$\left\{  (M_{i},\tau_{i})\right\}  _{i\in\mathbb{N}}$ is a strong projective
sequence of Banach manifolds fulfilling the following properties:
\begin{description}

\item[(i)]
$T^{\ast}\tau_{i}(T^{\prime}M_{i})\subset T^{\prime}M_{i+1}$;

\item[(ii)]
$P_{i}=T\tau_{i}\circ P_{i+1}\circ T^{\ast}{\tau_{i}}$;

\item[(iii)]
Around each $x\in M$ there exists a sequence of charts $\{\left(
U_{i},\phi_{i}\right)  \}_{i\in\mathbb{N}}$ such that\newline
$(U=\underleftarrow{\lim}(U_{i}),\phi=\underleftarrow{\lim}(\phi_{i}))$ is a
chart of $x$ in $M$ so that the charts $(U_{i+1},\phi_{i+1})$ and $(U_{i}%
,\phi_{i})$ are compatible with Property (i).
\end{description}
\end{enumerate}
\end{definition}

\begin{remark}
\label{R_PropertyOfF}
In the case (1), since $\left\{(M_{i},\epsilon_{i})\right\}  _{i\in\mathbb{N}}$ is a direct set of Banach manifolds, we may assume that $\mathbb{M}_{i}$ is a Banach subspace of $\mathbb{M}_{i+1}$. We denote by $\hat{\epsilon}_{i}$ the natural inclusion of
$\mathbb{M}_{i}$ in $\mathbb{M}_{i+1}$ and $\hat{\epsilon}_{i}^{\ast
}:\mathbb{M}_{i+1}^{\ast}\rightarrow\mathbb{M}_{i}^{\ast}$ the adjoint
operator. Note that ${\epsilon}_{i}^{\ast}$ is surjective. Since
$\mathbb{F}_{i}$ is the typical fiber of $T^{\prime}M_{i}$, according to
assumptions (i) and (iii) (compatibility with trivializations), we must have $\hat{\epsilon}_{i}^{\ast}(\mathbb{F}_{i+1})\subset\mathbb{F}_{i}$. 
Moreover, since $\mathbb{F}_{i+1}$ is a Banach
subspace of $\mathbb{M}_{i+1}^{\ast}$, then $\hat{\epsilon}_{i}^{\ast
}(\mathbb{F}_{i+1})$ is a Banach subspace of $\mathbb{F}_{i}$. \newline 
In the case (2), since $(M_{i},\tau_{i})_{i\in\mathbb{N}}$ is a strong projective set of Banach manifolds such that
$\tau_{i}:M_{i+1}\rightarrow M_{i}$ is a submersion, there exists a surjective
operator $\hat{\tau}_{i}$ from $\mathbb{M}_{i+1}$ onto $\mathbb{M}_{i}$. We
denote by $\hat{\tau}_{i}^{\ast}:\mathbb{M}_{i}^{\ast}\rightarrow
\mathbb{M}_{i+1}^{\ast}$ the adjoint operator which is injective. Note that
$\hat{\tau}_{i}^{\ast}(\mathbb{M}_{i}^{\ast})$ is a Banach subspace of
$\mathbb{M}_{i+1}^{\ast}$. Again, since $\mathbb{F}_{i}$ is the typical fiber
of $T^{\prime}M_{i}$, according to assumptions (i) and (iii), we must have $\hat{\tau
}_{i}^{\ast}(\mathbb{F}_{i})\subset\mathbb{F}_{i+1}$. Moreover, since
$\mathbb{F}_{i}$ is a Banach subspace of $\mathbb{M}_{i}^{\ast}$, then
$\hat{\tau}_{i}^{\ast}(\mathbb{F}_{i})$ is a Banach subspace of $\mathbb{F}%
_{i+1}$ for all $i\in\mathbb{N}$.\\
\end{remark}

\begin{remark}\label{R_Poissonmapmorphism}  Since by assumption, $p_{i}^{\prime}:T^{\prime}M_{i}\rightarrow M_{i}$ is a Banach subbundle of
$p_{M_{i}}^{\ast}:T^{\ast}M_{i}\rightarrow M_{i}$ for all $i\in \mathbb{N}$, from   property ({\bf i}) and ({\bf ii}) in Point (1) (resp. (2)) in Definition \ref{D_DirectSequencePartialPoissonBanachManifolds}, the assumptions of Theorem \ref{T_CharacterizationPoissonMorphism} Point (1) are satisfied.  It follows that
$\epsilon_i$ (resp. $ \tau_i$) is a Poisson morphism and so a Poisson map. It is easy to see that, for $j> i$, the same is true for $\epsilon_{ij}=\epsilon_{j-1}\circ \cdots\circ \epsilon_i:M_i\to M_j$ (resp. $\tau_{ij}=\tau_i\circ\cdots\circ \tau_{j-1}:M_j\to M_i$).\\
\end{remark}

According to the previous definitions, we have the following result:

\begin{theorem}
\label{T_PPSOnDirectInversePoissonBanachManifolds} Let $\{(M_{i},\mathcal{A}(M_{i}),\{\;,\;\}_{P_{i}})\}_{i\in\mathbb{N}}$ be a direct
sequence (resp. an inverse sequence) of partial Poisson Banach manifolds and
$M=\underrightarrow{\lim}(M_{i})$ (resp. $M=\underleftarrow{\lim}(M_{i})$).
There exists a weak subbundle $p^{\prime}:T^{\prime}M\rightarrow M$ of
$p_M^{\ast}:T^{\ast}M\rightarrow M$ and a skew-symmetric morphism $P:T^{\prime
}M\rightarrow TM$ such that $(M,\mathcal{A}_P(M),\{\;,\;\}_{P})$ is a partial
Poisson structure on $M$ with the following characterization:

\[
\mathcal{A}_P(M)=\underrightarrow{\lim}\mathcal{A}(M_i)\text{ (resp.}%
\mathcal{A}_P(M)=\underleftarrow{\lim}\mathcal{A}(M_i)\text{)};
\]
\

if $\bar{\epsilon}_{i}:M_{i}\rightarrow M$ (resp. $\bar{\tau}_{i}:M\rightarrow
M_{i}$) is the canonical injection (resp. projection) then $\bar{\epsilon}%
_{i}$ (resp. $\overline{\tau}_{i}$) is a Poisson map from $(M_{i}%
,\mathcal{A}(M_{i}),\{\;,\;\}_{P_{i}})$ to $(M,\mathcal{A}_P(M),\{\;,\;\}_{P})$
(resp. from $(M,\mathcal{A}_P(M),\{\;,\;\}_{P})$ to $(M_{i},\mathcal{A}(M_{i}),\{\;,\;\}_{P_{i}})$) for all $i\in\mathbb{N}$, and we have

\[
\{\;,\;\}_{P}=\underrightarrow{\lim}(\{\;,\;\}_{P_{i}}\text{ (resp.}%
\{\;,\;\}_{P}=\underleftarrow{\lim}(\{\;,\;\}_{P_{i}}\text{)}.
\]
\end{theorem}

\subsection{Proof of Theorem \ref{T_PPSOnDirectInversePoissonBanachManifolds} in the case of direct limit\label{__ProofThPPSDirectInversePBM}}

\label{DLcase} 
Let $\{(M_{i},\mathcal{A}(M_{i}),\{\;,\;\}_{P_{i}}%
)\}_{n\in\mathbb{N}}$ be a direct sequence of partial Poisson Banach
manifolds. From the assumption in Definition
\ref{D_DirectSequencePartialPoissonBanachManifolds} (1) and according to
\cite{CabPel2}, the direct limit $M=\underrightarrow{\lim}(M_{i})$, is a
convenient manifold.\\
Without loss of generality, we can assume that $M_{i}\subset M_{i+1}$; so 
$M=\bigcup\limits_{i\in\mathbb{N}}M_{i}$ and $\epsilon_{i}$ is the natural inclusion
of $M_{i}$ in $M_{i+1}$. For $j>i$, we denote by $\epsilon_{ji}=\epsilon
_{j-1}\circ\dots\circ\epsilon_{i}:M_{i}\rightarrow M_{j}$  the natural
inclusion. Given a point $x\in M$, there exists $i\in\mathbb{N}$ such that $x$
belongs to $M_{i}$; let $n$ be the smallest of such integers $i$. On the one hand,
$T_{x}\epsilon_{kn}:T_{x}M_{n}\rightarrow T_{x}M_{k}$ is an injective
continuous linear map for all $k>n$. Since each $T_{x}M_{k}$ is isomorphic to
the Banach space $\mathbb{M}_{k}$ for $k\geq n$, the set $\{(T_{x}M_{k}
,T_{x}\epsilon_{kn})\}_{k\geq n}$ is an ascending sequence of Banach space
whose direct limit $T_{x}M=\underrightarrow{\lim}(T_{x}M_{k})=\bigcup\limits_{k\geq n}T_{x}M_{k}$ is a convenient space. We set $TM=\bigcup\limits_{x\in M}T_{x}M$. Let $p:TM\rightarrow M$ be the canonical projection.\\
Now, from Definition \ref{D_DirectSequencePartialPoissonBanachManifolds}
(1), it follows that $T_{x}^{\ast}\epsilon_{kn}(T_{x}^{\prime}M_{k})$ is a
subset of $T_{x}^{\prime}M_{n}$ for all $k>n$. Therefore the $\{(T_{x}^{\prime
}M_{k},T_{x}^{\ast}\epsilon_{kn})\}_{k>n}$ is an inverse sequence of Banach spaces. We set $T_{x}
^{\prime}M=\underleftarrow{\lim}(T_{x}^{\prime}M_{k})$.  In the same way, we can define the "projective dual" $T_{x}^{\ast}M=\underleftarrow{\lim}(T_{x}^{\ast}M_{k})$ of $T_{x}M$. Of course, we have $T_{x}^{\prime}M\subset T_{x}^{\ast}M$ and these vector spaces
provided with the inverse limit topology are Fr\'{e}chet vector
spaces. We set $T^{\prime}M=\bigcup\limits_{x\in M}T_{x}^{\prime}M$ and $T^{\ast
}M=\bigcup\limits_{x\in M}T_{x}^{\ast}M$. We have canonical projections $p^{\prime}:T^{\prime}M\rightarrow M$ and $p^{\ast}:T^{\ast}M\rightarrow M$. We then have:

\begin{proposition}
\label{P_ConvenientTangentBundleFrechetCotangentBundleBundleMorphism}${}$
\begin{enumerate}
\item 
$p:TM\rightarrow M$ is a convenient bundle which is the kinematic bundle of $M$.
\item 
$p^{*}:T^{*}M\rightarrow M$ and $p^{\prime}:T^{\prime}M\rightarrow M$
are Fr\'echet locally trivial bundles over $M$. Moreover $p^{*}:T^{*}%
M\rightarrow M$ is the kinematic dual bundle of $M$.
\item There exists a canonical bundle morphism $P:T^{\prime}M\rightarrow TM$
characterized by
\[
{P}(x,\xi)=P(x,\underleftarrow{\lim}_{k\geq i}(\xi_{k}))=\underrightarrow
{\lim}_{k\geq i}(P_{k}(x,\xi_{k})) \text{ if } x\in M_{i}%
\]
Moreover, $P$ is skew-symmetric, relatively to the canonical dual pairing
between $T^{*}M$ and $TM$ in restriction to $T^{\prime}M\times TM$.
\end{enumerate}
\end{proposition}

\begin{proof}
[Proof of Proposition
\ref{P_ConvenientTangentBundleFrechetCotangentBundleBundleMorphism}]. We fix
some $x\in M$ and assume that $x$ belongs to $M_{n}$ where $n$ is the smallest
integer for which this property is true. Since $\{M_{i},\epsilon_{i}%
\}_{i\in\mathbb{N}}$ has the limit chart property, there exists a chart
$(U=\underrightarrow{\lim}_{k>n}(U_{k}),\phi=\underrightarrow{\lim}_{k>n}%
(\phi_{k}))$ around $x$ such that $(U_{k},\phi_{k})$ is a chart around
$\epsilon_{kn}(x_{n})$ in $M_{k}$. After restricting $U_{n}$ if necessary, we
may assume that $U\cap M_{i}=\emptyset$ for $i<n$.

\textit{Proof of Point (1)}\\
cf. \cite{CabPel2}, proof of Proposition 41, 2.

\textit{Proof of Point (2)} \\ 
We begin by considering $T_{M_{i}}^{\ast}M=\bigcup_{x\in M_{i}}T_{x}^{\ast}M$ and $T_{M_{i}}^{\prime}M=\bigcup_{x\in M_{i}}T_{x}^{\prime}M$ and we denote by $p_{M_{i}}^{\ast}:T_{M_{i}}^{\ast}M\rightarrow M_{i}$ and $p_{M_{i}}^{\prime}:T_{M_{i}
}^{\prime}M\rightarrow M_{i}$ the canonical associate projections
respectively. For each $i\in\mathbb{N}$ and $j\in\mathbb{N}$ such that $j>i$, we set $T_{M_{i}}M_{j}=(TM_{j})_{|M_{i}}$, $T_{M_{i}}^{\ast}M_{j}=(T^{\ast}M_{j})_{|M_{i}}$ and $\epsilon_{ji}=\epsilon_{j-1}\circ\dots\circ\epsilon_{i}$. Then $T\epsilon_{ji}$ is a morphism from $TM_{i}$ to $TM_{j}$ and so we get a surjective morphism $T^{*}\epsilon_{ji}$ from $T_{M_{i}}^{\ast} M_{j}$ into $T^{\ast}M_{i}$ given by $T^{*}_{x_{i}}\epsilon_{ji}(\xi)=\xi_{| T_{x_{i}%
}M_{i}}$.

Thanks to the compatibilty relations of local charts, we have
\begin{align}
\label{Tepsilon*}
T^{\ast}{\epsilon}_{ji}\circ T^{\ast}\phi_{j}=T^{\ast}
\phi_{i}\circ T^{\ast}\hat{\epsilon}_{ji}\text{ on }\hat{\epsilon}_{ji}%
\circ\phi_{i}(U_{i} )\times\mathbb{M}_{j}^{\ast}.
\end{align}

But $T^{\ast}\hat{\epsilon}_{ji}$ from $\mathbb{M}_{i}\times\mathbb{M}
_{j}^{\ast}$ onto $\mathbb{M}_{i}\times\mathbb{M}_{i}^{\ast}$ is the map
$(y,\xi)\mapsto(y,\xi_{|\mathbb{M}_{i}})$. It follows that $\{(T_{M_{i}
}^{\ast}M_{j},T^{\ast}\epsilon_{ji})\}_{j>i}$ is a strong projective
sequence of bundles over $M_{i}$. From \cite{Gal}, it follows that
\[
{T}_{M_{i}}^{\ast}M=\underleftarrow{\lim}(T_{M_{i}}^{\ast}M_{j})\rightarrow
M_{i}
\]
is a Fr\'echet bundle. Note that ${T}_{M_{i}}^{\ast}M$ is a Fr\'echet manifold
modelled on $\mathbb{M}_{i}\times\underleftarrow{\lim}_{j\geq i}(\mathbb{M}
_{j}^{\ast})$. According to Properties (i) and (iii) of Definition
\ref{D_DirectSequencePartialPoissonBanachManifolds}, the same arguments
implies that $\{(T_{M_{i}}^{\prime}M_{j},{T^{\ast}\epsilon_{ji}}_{|T_{M_{i}
}^{\prime}M_{j})}\}_{j>i}$ is also a strong projective sequence of bundles
over $M_{i}$.
\[
{T}_{M_{i}}^{\prime}M=\underleftarrow{\lim}(T_{M_{i}}^{\prime}M_{j}
)\rightarrow M_{i}
\]
is then a Fr\'echet bundle and a Fr\'echet manifold modelled on $\mathbb{M}
_{i}\times\underleftarrow{\lim}_{j > i}(\mathbb{F}_{j})$ .\newline

On the one hand, $\epsilon_{ji}:M_{i}\rightarrow M_{j}$ is the natural
inclusion, which induces the natural inclusion $\bar{\epsilon}_{ji}:T_{M_{i}%
}^{\ast}M\rightarrow T_{M_{j}}^{\ast}M$, namely $\bar{\epsilon}_{ji}(\omega)(x_{i})=\omega(\epsilon_{ji}(x_{i}))$ for any $x_{i}\in M_{i}$.
Therefore $\{(T_{M_{j}}^{\ast}M,M_{j},\bar{\epsilon} _{ji})\}_{j>i}$ is a
direct sequence of Fr\'{e}chet bundles and so we have $T^{\ast}%
M=\underrightarrow{\lim}(T_{M_{i}}^{\ast}M)$.\\
If we set $T^*_{U_{i}}M={T_{M_{i}}^{*}M}_{| U_{i}}$, in the same way, we also
have $T_{U}^{*}M={T_{M_{i}}^{*}M}_{| U_{i}}=\underrightarrow{\lim}(T_{U_{i}%
}^{\ast}M_{j})$\\
We have $\phi_{ji}={\phi_{j}}_{| U_{i}}=\phi_{i}$ and so $\phi_{ji}(U_{i})=\phi_{i}(U_{i})$. 
Therefore $T^{*}\phi_{ji}^{-1}$ is a trivialization of $T_{U_{i}}^{*}M_{j}$ onto $\phi_{i}(U_{i})\times \mathbb{M}_{j}^{*}$ which is the restriction of $T^{*}\phi_{j}$ to $T_{U_{i}%
}M_{j}$. According to (\ref{Tepsilon*}), and the results of \cite{Gal}, we get
a trivialization $\underrightarrow{\lim}(T^{*}\phi_{ji}^{-1})$ of $T_{U_{i}}M$
onto $\phi_{i}(U_{i})\times\mathbb{M}^{*}$. Note that $\underrightarrow{\lim
}(T^{*}\phi_{ji}^{-1})$ is in fact the adjoint operator of $T\phi_{j}^{-1})_{|
\{\phi_{i}(x_{i})\}\times\mathbb{M}^{*}}$ for all $x_{i}\in U_{i}$ where
$\phi=\underrightarrow{\lim}(\phi_{j})$ and so $T^{*}\phi^{-1}_{| U_{i}%
}=\underrightarrow{\lim}(T^{*}\phi_{ji}^{-1})$

\noindent But since $(U=\underrightarrow{\lim}(U_{j}),\phi=\underrightarrow
{\lim}(\phi_{j}))$ is a limit chart, for such fixed $i$, we have
$U=\bigcup_{j\geq i}U_{j}$ and
and then $T^{\ast}{\phi_{j}}^{-1}{| U_{i}}=\underleftarrow{\lim}_{j\geq
i}(T^{\ast}{\phi_{ji}}^{-1})$ is a trivialization of $T^{*}M$ onto
$\phi(U)\times\mathbb{M}^{*}$.

Thus we get a direct sequence of charts for the direct limit
 \[
(T^{\ast}_{U_{i}}M=\underrightarrow{\lim}(T_{U_{i}}^{\ast}M),{T^{\ast}\phi^{-1}}_{| U_{i}})=\underrightarrow{\lim}(T^{*}\phi_{ji}^{-1})
\] 
for the sequence $\{T_{M_{i}}^{\ast}M,\bar{\epsilon}_{ji}\}_{j>i}$ around any point $x\in M$ which belongs to $M_{i}$. Note that each manifold $T_{M_{i}}^{\ast}M$ is a closed immersed submanifold of $T_{M_{j}}^{\ast}M$ modelled on the Fr\'echet spaces $\mathbb{M}_{i} \times\underleftarrow{\lim}_{l\geq i}(\mathbb{M}_{l}^{\ast})$
and $\mathbb{M} _{j}\times\underleftarrow{\lim}_{l\geq j}(\mathbb{M}_{l}%
^{\ast})$ respectively and the first one is a closed Fr\'echet subspace of the
second one. Note also that $\underrightarrow{\lim}_{j\geq i}(\mathbb{M}%
_{j})\times\underleftarrow{\lim}_{l\geq i}(\mathbb{M}_{l}^{\ast})$ is a
convenient space which is diffeomorphic to $\mathbb{M}\times\mathbb{M}^{\ast}$
where $\mathbb{M} =\underrightarrow{\lim}(\mathbb{M}_{j})$. By same arguments
as in \cite{CabPel2} Proposition 41, we can prove that $T^{\ast}M$ is a convenient manifold whose structural group is a metrizable complete topological group.

According to the assumption (ii) of Point (1) in Definition
\ref{D_DirectSequencePartialPoissonBanachManifolds}, the arguments used to
prove that $T^{\ast}M=\underrightarrow{\lim}(T_{M_{i}}^{\ast}M)$ is a
convenient bundle over $M$ (with typical fiber $\mathbb{F}=\underleftarrow
{\lim}(\mathbb{F}_{l})$) still work for $T^{\prime}M=\underrightarrow{\lim
}(T_{M_{i}}^{\prime}M)$.\newline

\textit{Proof of Point (3)}\\
Fix some $i\in\mathbb{N}$. According
to assumption (ii) of Point (1) in Definition
\ref{D_DirectSequencePartialPoissonBanachManifolds}, $l\geq j\geq i$ by
composition we have over $M_{l}$
\[
P_{l}=T\epsilon_{lj}\circ P_{j}\circ T^{\ast
}\epsilon_{lj}:T_{M_{j}}^{\prime}M_{l}\rightarrow T_{M_{j}}M_{l}
\]
Therefore this relation is also true in restriction to $T_{M_{i}%
}M_{j}$ and, in this case, over $M_{i}$, we have the composition
\[
P_{l}=T\epsilon_{li}\circ P_{j}\circ T^{\ast}\epsilon_{li}:T_{M_{i}}^{\prime}
M_{j}\rightarrow T_{M_{i}}M_{l}
\]
We set $P_{lji}=T\epsilon_{li}\circ{P_{j}}_{|M_{i}}$. Therefore we have a morphism
$P_{lji}:T_{M_{i}}^{\prime}M_{j}\rightarrow T_{M_{i}}M_{l}$. From the
arguments developed in the proof of Point (1), it is easy to see that
$\{(T_{M_{i}}M_{l},T\epsilon_{li})\}_{l\geq i}$ is a direct sequence of Banach
bundles. Since we have $P_{lji}=T\epsilon_{ji}\circ P_{j}$, we get a morphism
$\bar{P}_{ji}=\underrightarrow{\lim}_{l\geq j}P_{lji}$ from $T_{M_{i}}
^{\prime}M_{j}$ to $\underrightarrow{\lim}_{l\geq j}T_{M_{i}}M_{j}
=\bigcup_{l\geq j}T_{M_{i}}M_{l}$. Note that since we have $T{M_{i}}\subset
T_{M_{i}}M_{i+1}\subset\cdots\subset T_{M_{i}}M_{j}$, 
then we also have
\[
\underrightarrow{\lim}_{l\geq j}T_{M_{i}}M_{j}=\bigcup_{l\geq i}T_{M_{i}
}M_{l}=TM_{| M_{i}}=T_{M_{i}}M.
\]
On one hand, recall that $\{(T_{M_{i}}^{\prime}M_{j},{T^{\ast
}\epsilon_{ji}}_{|T_{M_{i}}^{\prime}M_{j})}\}_{j>i}$ is a strong projective sequence of bundles over $M_{i}$ (see the second part of the proof of Point (2)). On the other hand, we have a family of morphisms
$ \bar{P}_{ji}:T_{M_{i}}^{\prime}M_{j}\rightarrow T_{M_{i}}M$ over $M_{i}$ such that $\bar{P}_{hi}=\bar{P}_{ji}\circ T^{\ast}\epsilon_{hj}.$

\noindent This implies that we get a morphism 
\[
\bar{P}_{i}=\underleftarrow
{\lim}_{j\geq i}(\bar{P}_{ji}):T_{M_{i}}^{\prime}M=\underleftarrow{\lim
}_{j\geq i}(T_{M_{i}}^{\prime}M_{j})\rightarrow T_{M_{i}}M.
\]

\noindent By construction, if $(x,\xi=\underleftarrow{\lim}_{k\geq i}(\xi
_{k}))\in T^{\prime}_{M_{i}}M$ then
\[
\bar{P}_{i}(x,\xi)=(x, \underrightarrow{\lim}_{k\geq i}(P_{k}(x,\xi_{k})).
\]

\noindent Now recall that each $P_{k}$ is antisymmetric, i.e. in each
fiber over $x$ we have the relation
\[
<\eta_{k},P_{k}(x,\xi_{k})>=-<\xi_{k},P_{k}(x,\eta_{k})>.
\]

\noindent Thus given two covectors $\xi=\underleftarrow{\lim}_{k\geq i}
(\xi_{k}))$ and $\eta=\underleftarrow{\lim}_{k\geq i}(\eta_{k}))$ in
$T_{x}^{\prime}M$, we obtain
\[
<\eta, \bar{P}_{i}(x,\xi)>=- <\xi,\bar{P}_{i}(x,\eta)>.
\]
Finally, since we have ascending sequences $\{T_{M_{i}}^{\prime}M\}_{i\in
\mathbb{N}}$ and $\{T_{M_{i}}M\}_{i\in\mathbb{N}}$, from the construction of
the sequence $\{\bar{P}_{i}\}_{i\in\mathbb{N}}$ of morphisms, we have $\bar
{P}_{j}(x,\xi)=\bar{P}_{i}(x,\xi)$ for all $j\geq i$; we then obtain a
morphism $P:T^{\prime}M\rightarrow TM$ which is antisymmetric relatively to
the canonical duality pairing between $T^{\ast}M$ and $TM$ in restriction
to $T^{\prime}M\times TM$.
\end{proof}

\textit{We now give a sketch  of  the proof of Theorem
\ref{T_PPSOnDirectInversePoissonBanachManifolds} in the
case of direct limits.} \\

According to Proposition
\ref{P_ConvenientTangentBundleFrechetCotangentBundleBundleMorphism}, we have a vector subbundle $T^{\prime}M\rightarrow M$ of $T^{*}M\rightarrow M$ and a
skew-symmetric morphism $P:T^{\prime}M\rightarrow TM$ and so is a partial Poisson anchor which defines an almost Poisson bracket $\{.,.\}_P$. 

Now, if $f=\underrightarrow
{\lim}(f_{i})$ then $f_{i}=f\circ\bar{\epsilon}_{i}$ and $g_{i}=g\circ
\bar{\epsilon}_{i}$ and so $df_{i}=T^{\ast}\bar{\epsilon} _{i}(df)$. Thus   
we have:
\begin{align}
\label{epsilonPmorphism}\{f,g\}_{P}\circ\bar{\epsilon}_{i}=df_{i}%
(T\bar{\epsilon} _{i}(P(dg))=df_{i}(P_{i}(dg_{i})=\{f_{i},g_{i}\}_{P_{i}%
}=\{f\circ\bar{\epsilon}_{i},g\circ\bar{\epsilon}_{i}\}_{P_{i}}.
\end{align}

\noindent Since $\bar{\epsilon}_{i}=\bar{\epsilon}_{j} \circ\epsilon_{ji}$ according to Remark \ref{R_Poissonmapmorphism} it
follows that
\[
\{f,g\}_{P}=\underrightarrow{\lim}\{f_{i},g_{i}\}_{P_{i}}.
\]

\noindent Now, as each Poisson bracket $\{\;,\;\}_{P_{i}}$ satisfies the
Jacobi identity, the same is true for $\{.,.\}_{P}$   on $\mathcal{A}_P(M)=\underrightarrow{\lim}\mathcal{A}(M_i)$.  It follows that $(M,\mathcal{A}_P(M),\{\;,\;\}_{P})$ is a partial Poisson manifold. Finally the
Equation (\ref{epsilonPmorphism}) means the $\bar{\epsilon}_{i}$ is a Poisson
map. {This ends the proof of Theorem
\ref{T_PPSOnDirectInversePoissonBanachManifolds} in the
case of direct limits.

\subsection{Proof of Theorem \ref{T_PPSOnDirectInversePoissonBanachManifolds} in the case of Inverse limit}

Let $\{(M_{i},\mathcal{A}(M_{i} ),\{\;,\;\}_{P_{i}}
)\}_{i\in\mathbb{N}}$ be an inverse sequence of partial Poisson Banach
manifolds. For $j>i$, we set $\tau_{ji}=\tau_{i}\circ\cdots\circ\tau_{j-1}:
M_{j}\rightarrow M_{i}$. From the assumption in Definition
\ref{D_DirectSequencePartialPoissonBanachManifolds} (2) the projective limit
$M=\underleftarrow{\lim}(M_{i})$, is a Fr\'echet manifold; In particular $M$ is a convenient manifold. Note that since $\{(M_{i},\tau_{i})\}_{i\in\mathbb{N}}$ is an inverse sequence of Banach manifolds, this implies that, for all $i\in\mathbb{N}$, we have a surjective linear continuous map $\hat{\tau}_{i}:\mathbb{M}_{i+1} \rightarrow\mathbb{M}_{i}$ (cf. Remark
\ref{R_PropertyOfF}). If we set $\hat{\tau}_{ji}=\hat{\tau}_{i}\circ
\cdots\circ\hat{\tau}_{j-1}: \mathbb{M}_{j}\rightarrow\mathbb{M}_{i}$ then
$\{(\mathbb{M}_{i},\hat{\tau} _{ji})\}_{j>i}$ is an inverse sequence of
Banach spaces and $M$ is modelled on the Fr\'echet space $\mathbb{M}%
=\underleftarrow{\lim}(\mathbb{M}_{i})$. As in \cite{Gal}, the set
$\{(TM_{i},T\tau_{ji})\}_{j \geq i}$ is a strong projective sequence of Banach
manifolds and $TM=\underleftarrow{\lim}({TM}_{i})$ is the kinematic tangent
bundle of the Fr\'echet manifold $M$ modelled on $\mathbb{M}$. Now, since $M$ is a Fr\'echet manifold, the dual convenient kinematic bundle
$p^{*}:T^{*}M\rightarrow M$ is well defined and its typical fibre is the
strong dual $\mathbb{M}^{*}$ of $\mathbb{M}$ (cf. \cite{KriMic}, 33.1).\\
We identify $M$ with the set
\[
\{x=(x_{i})\in\prod_{i\in\mathbb{N}^{*}} M_{i} : x_{i}=\tau_{ji}(x_{j}) \text{ for } j> i\geq1\}
\]
Since for each $j> i$, $\tau_{ji}:M_{j}\rightarrow M_{i}$ is a submersion,
the transpose map $T^{*}{\tau}_{ji}:T^{*}_{{\tau}_{ji}(x)}M_{i}\rightarrow
T^{*}_{x}M_{j}$ is a continuous linear injective map whose range is closed,
for all $x\in M_{j}$. Now we have a submersion $\bar{\tau}_{i}: M\rightarrow
M_{i}$ defined by $\bar{\tau}_{i}(x)=x_{i}\in M_{i}$ for each $x\in M$, and
again the transpose map $T^{*}\bar{\tau}_{i}:T^{*}_{\bar{\tau}_{i}(x)}%
M_{i}\rightarrow T^{*}_{x}M$ is a linear continuous injection whose range is
closed. Therefore we have an ascending sequence $\{T^{*}_{\bar{\tau}_{i}%
(x)}M_{i}\}_{i\in\mathbb{N}}$ of closed Banach spaces. Since $T_{x}M$ is the
projective limit of $\{T_{\bar{\tau}_{i}(x)}M_{i})\}$, each vector space
$\underrightarrow{\lim}(T^{*}_{\bar{\tau}_{i}(x)}M_{i})$ is the strong dual of
$T_{x}M$ and is a convenient space (cf. \cite{CabPel2}). In particular,
we have $T_{x}^{*}M=\underrightarrow{\lim}(T^{*}_{\bar{\tau}_{i}(x)}M_{i})$

Now from Definition \ref{D_DirectSequencePartialPoissonBanachManifolds}, Point
(2), property (i), we have $T^{*}{\tau}_{ji}(T^{*}_{\bar{\tau}_{ji}(x)}M_{i})
\subset T^{*}_{x}M_{j}$ for all $x\in M_{j}$. Therefore, with our previous
identifications, $\{T^{\prime}_{\bar{\tau}_{i}(x)}M_{i}\}_{i\in\mathbb{N}}$ is
an ascending sequence of closed Banach spaces contained in $T^{*}_{x}M$. It
follows that $T^{\prime}_{x}M= \underrightarrow{\lim}(T^{\prime}_{\bar{\tau
}_{i}(x)}M_{i})$ is a convenient subspace of $T^{*}_{x}M$. We set $T^{\prime
}M=\bigcup\limits_{x\in M}T^{\prime}_{x}M$ and $p^{\prime}:T^{\prime}M\rightarrow M$ the map defined by $p^{\prime}(x,\xi)=x$.\\
We then have:

\begin{proposition}
\label{P_inversesequence}${}$

\begin{enumerate}
\item $p^{\prime}: T^{\prime}M\rightarrow M$ is a convenient subbundle of the cotangent bundle ${p}^{*}:{T}^{*}M\rightarrow M$.

\item For each $i\in\mathbb{N}$, there exists a canonical bundle
morphism $\bar{T^{\prime}}\bar{\tau}_{i}: T^{\prime}M\rightarrow T^{\prime
}M_{i}$ over $\bar{\tau}_{i}$ such that $\bar{T^{\prime}}\bar{\tau}_{i}%
(x,\xi)=(x_{i},\xi_{i})$ if $x=\underleftarrow{\lim}(x_{i})$ and
$\xi=\underrightarrow{\lim}(\xi_{i})$ where
\[
P_{i}\circ\bar{T^{\prime}}\bar{\tau}_{i}(x,\xi)=T\tau_{ji}\circ P_{j}\circ
\bar{T^{\prime}}\bar{\tau}_{j}(x,\xi)
\]
for all $(x,\xi)\in T^{\prime}M$ and $j>i$. Then $P=\underleftarrow{\lim
}(P_{i}\circ\bar{T^{\prime}}\bar{\tau}_{i})$ is a bundle morphism from
$T^{\prime}M$ to $TM$
which is skew-symmetric (relatively to the canonical dual pairing between
$T^{*}M$ and $TM$ in restriction to $T^{\prime}M\times TM$).
\end{enumerate}
\end{proposition}

\begin{proof}
[Proof of Proposition \ref{P_inversesequence}]. In this proof, we
will consider an inverse sequence $\{(U_{i},\phi_{i})\}_{i\in\mathbb{N}}$ of
charts such that $(U=\underleftarrow{\lim}(U_{i}),\phi=\underleftarrow{\lim
}(\phi_{i}))$ is a chart of $M$.\newline

\textit{Proof of Point (1)}\newline 
At first we will show that $T^{*}_{M}M_{l}=\bigcup\limits_{x\in M} T^{*}\bar{\tau}_{l}(T_{\bar{\tau}_{l}(x)}%
^{*}M_{i})$ is the total space of a convenient bundle over $M$ which is nothing more than $p^{*}: T^{*}M\rightarrow M$ and that $T^{\prime}_{M}M_{l}=\bigcup\limits_{x\in M} T^{*}\bar{\tau}_{l}(T_{\bar{\tau}_{l}(x)}^{\prime}M_{i})$ is the
total space of a convenient bundle $p^{\prime}:T^{\prime}M\rightarrow M$ which is also a closed subbundle of $p^{*}:T^{*}M\rightarrow M$.\newline 
Fix some chart $(U=\underleftarrow{\lim}(U_{l}),\phi=\underleftarrow{\lim}(\phi_{l}))$ as previously, and for each $l\in\mathbb{N}$, consider
\[
\bigcup\limits_{x\in U} T^{*}\bar{\tau}_{l}(T_{\bar{\tau}_{l}(x)}^{*}%
M_{l})={T_{M}^{*}M_{l}}_{| U}\subset T^{*}M_{| U}%
\]
Recall that we have $\bar{\tau}_{l}(U)=U_{l}$ and
$\hat{\bar{\tau}}_{l}\circ\phi= \phi_{l}\circ\bar{\tau}_{l}$ where $\hat
{\bar{\tau}}_{l}$ is the natural linear projection $\mathbb{M}=\underleftarrow
{\lim}(\mathbb{M}_{l})$ on $\mathbb{M}_{l}$.

\noindent Therefore, if $\hat{\bar{\tau}}_{l}^{*}$ is the adjoint of
$\hat{\bar{\tau}}_{l}$ then $\hat{\bar{\tau}}_{l}^{*}$ is injective and so
$\mathbb{M}^{*}$ can be identified with the direct limit of the ascending
sequence $\{(\mathbb{M}_{l}^{*},\hat{\bar{\tau}}_{l}^{*})\}_{l\in\mathbb{N}}$ of Banach spaces. It follows that $T^{*}\phi: \phi(U)\times\mathbb{M}%
^{*}\rightarrow T^{*}M_{| U}$ is a bundle isomorphism which is the inverse of
the trivialization $T^{*}\phi^{-1}$ of the cotangent bundle over $U$. Moreover, on $TM_{| U}$, we also have
\[
T\hat{\bar{\tau}}_{l}\circ T\phi= T\phi_{l}\circ T\bar{\tau}_{l}. 
\]
\noindent We then obtain
\begin{align}
\label{relationT*}
T^{*}\phi\circ T^{*}\hat{\bar{\tau}}_{l} = T^{*}\bar{\tau
}_{l}\circ T^{*}\phi_{l} \text{ on } \{\hat{\bar{\tau}}_{l}(\phi(x))\}
\times\mathbb{M}^{*}_{l} \text{ for all } x\in U
\end{align}
\noindent In particular
\[
T^{*}\phi\circ T^{*}\hat{\bar{\tau}}_{l} :\{\bar{\tau}_{l}(\phi(x))\}
\times\mathbb{M}^{*}_{l}\rightarrow T^{*}\bar{\tau}_{l}(T_{\bar{\tau}_{l}%
(x)}^{*}M_{l})\subset T^{*}_{x}M
\] 
is a linear map for all $x\in U$.

\noindent But $T^{*}\hat{\bar{\tau}}_{l}\equiv\hat{\bar{\tau}}_{l}^{*}
:\{\hat{\bar{\tau}}_{l}(t)\}\times\mathbb{M}_{l}^{*}\rightarrow\{t\}\times
\mathbb{M}^{*}$ is an injective closed linear map for all $t\in\phi(U)$.
Therefore, on the one hand, if we denote $\mathbb{E}_{l}= \hat{\bar{\tau}%
}_{l}^{*}(\mathbb{M}_{l}^{*})$, then $\hat{\bar{\tau}}_{l}^{*}$ is an
isomorphism from $\mathbb{M}_{l}^{*}$ onto the closed Banach subspace
$\mathbb{E}_{l}$ of $\mathbb{M}^{*}$; so we can assume that $\mathbb{M}%
_{l}^{*}$ is contained in $\mathbb{M}^{*}$. With these identifications, the map $T^{*}\phi\circ T^{*}\hat{\bar{\tau}}_{l}$ is nothing but the natural
inclusion of $\phi(U)\times\mathbb{M}_{l}^{*}$ into $\phi(U)\times
\mathbb{M}^{*}$.

In this way, we obtain $T^{*}\phi( \phi(U)\times\mathbb{M}^{*}_{l})={T_{M}%
^{*}M_{l}}_{| U}\subset T^{*}M_{| U}$. This implies that ${T_{M}^{*}M_{l}}$ is
the total space of a closed trivial subbundle of $T^{*}M \rightarrow M$ with
typical fiber $\mathbb{M}^{*}_{l}$.

Now, recall that $\mathbb{F}_{l}$ is a Banach subspace of $\mathbb{M}_{l}^{*}%
$. From Equation (\ref{relationT*}) and the assumptions of Definition
\ref{P_inversesequence}, Point (2) and the previous arguments, we have
\[
T^{*}\phi(\phi(U)\times\mathbb{F}_{l})=\bigcup\limits_{x\in U} T^{*}%
\bar{\tau}_{l}(T_{\bar{\tau}_{l}(x)}^{\prime}M_{l})={T^{\prime}_{M}M_{l}}_{|
U}\subset T^{*}M_{| U}%
\]
Therefore ${T_{M}^{\prime}M_{l}}$ is also the total space of a closed trivial
subbundle of $T^{*}M \rightarrow M$ with typical fiber $\mathbb{F}$. So the proof of our affirmation is complete.

Since $\hat{\tau}_{ji}^{*}$ is a linear continuous closed inclusion of
$\mathbb{M}_{i}^{*}$ into $\mathbb{M}_{j}^{*}$, via any chart
$(U=\underleftarrow{\lim}(U_{l}),\phi=\underleftarrow{\lim}(\phi_{l}))$ with
the properties required at the beginning of the proof of Proposition
\ref{P_inversesequence}, we can build an injective morphism $T_{ji}^{U}:
{T^{*}_{M}M_{i}}_{| U}\rightarrow{T^{*}_{M}M_{j}}_{| U}$ given by%

\[
T_{ji}^{U}(x,\xi)=T^{*}\phi\circ\hat{\tau}_{ji}^{*}\circ T^{*}\phi^{-1}(x,\xi)
\text{ for all } (x,\xi)\in{T_{M}^{*}M_{i}}_{| U}%
\]
Moreover for any other chart $(U^{\prime}=\underleftarrow{\lim}%
(U^{\prime}_{i}),\phi^{\prime}=\underleftarrow{\lim}(\phi^{\prime}_{l}))$ of this type with $U\cap U^{\prime}\not =\emptyset$, for all
$(x,\xi)\in{T_{M}^{*}M_{i}}_{| U\cap U^{\prime}}$, we have
\[
T^{U^{\prime}}_{ji}(x,\xi)=T^{*}\phi^{\prime}\circ T^{*}\phi^{-1}\circ
T^{U}_{ji}\circ T^{*}\phi\circ T^{*}{\phi^{\prime}}^{-1}(x,\xi) 
\]
We get an injective bundle morphism $T_{ji}: T_{M}^{*}M_{i}\rightarrow
T^{*}_{M}M_{j}$ which is nothing but the inclusion of $T_{M}^{*}M_{i}$ into
$T^{*}_{M}M_{j}$. In other words, $T_{M}^{*}M_{i}\rightarrow M$ is a Banach
subbundle of $T^{*}_{M}M_{j}$ for $j\geq i$. Finally, from this construction,
it follows that $\{(T_{M}^{*}M_{i},T_{ji})\}_{j\geq i}$ is an ascending sequence of
Fr\'echet manifolds which has the direct limit chart property at every point
of $\underrightarrow{\lim}(T_{M}^{*}M_{i})$. It follows that $\underrightarrow
{\lim}(T_{M}^{*}M_{i})$ is a convenient manifold modelled on $\mathbb{M}%
\times\mathbb{M}^{*}$; In particular we have $T^{*}M=\underrightarrow{\lim
}(T_{M}^{*}M_{i})$. Moreover, around each point $(x,\xi_{i})$ in $T^{*}%
_{M}M_{i}$, there exists a chart $({T_{M}^{*}M_{i}}_{| U}, T^{*}\phi_{|
\{{T_{M}^{*}M_{i}}_{| U}\}})$ where $(U,\phi)$ is a chart around $x$ which has
the properties required at the begining of the proof of Proposition
\ref{P_inversesequence}. \newline

Therefore $(T^{*}M_{| U}=\underrightarrow{\lim}({T_{M}^{*}M_{i}}_{| U}),
(T\phi)^{*}=T^{*}\phi^{-1}=\underrightarrow{\lim}(T^{*}\phi_{| \{{T_{M}%
^{*}M_{i}}_{| U}\}}))$ is a chart around $(x,\xi=\underrightarrow{\lim}%
(\xi_{i}))$ in $T^{*}M$. This implies that $T^{*}M=\underrightarrow{\lim
}(T_{M}^{*}M_{i})$ is a convenient vector bundle over $M$ whose typical fiber
is $\mathbb{M}^{*}=\underrightarrow{\lim}(\mathbb{M}_{i}^{*})$. \newline
Clearly the same arguments can be applied to $\{(T^{\prime}_{M}M_{i}, {T_{ji}%
}_{| T^{\prime}_{M}M_{i}})\}_{j\geq i}$ and so we have $T^{\prime
}M=\underrightarrow{\lim}(T_{M}^{\prime}M_{i})$ and we get a convenient vector
bundle $p^{\prime}:T^{\prime}M\rightarrow M$ with typical fiber $\mathbb{F}%
=\underrightarrow{\lim}(\mathbb{F}_{i})$.\newline

\textit{Proof of Point (2)}\newline 
For each $i\in\mathbb{N}$, consider the bundle $\bar{p}^{\prime}_{i}:{T_{M}^{\prime}}^{*}M_{i}\rightarrow M$. Obviously, this bundle is nothing but the pull back over $\bar{\tau}%
_{i}:M\rightarrow M_{i}$ of the bundle $p^{\prime}_{i}:T^{\prime}%
M_{i}\rightarrow M_{i}$. Therefore we have a bundle morphism ${T}^{\prime}%
\bar{\tau}_{i}$ over $\bar{\tau}_{i}$ from $T_{M}^{\prime}M_{i}$ to
$T^{\prime}M_{i}$ such that its restriction to any fiber is an isomorphism
whose inverse is $T^{*}\bar{\tau}_{i}$ in restriction to ${T}^{\prime}%
_{\bar{\tau}_{i}(x)}M_{i}$. Since $T^{*}M=\underrightarrow{\lim}(T_{M}%
^{*}M_{i})$, we have an injective bundle morphism $T_{i}: T_{M}^{*}%
M_{i}\rightarrow T^{*}M$ which is the natural inclusion. Note that for all
$j>i$ we have $T_{j}=T_{ji}\circ T_{i}$, where $T_{ji}:T_{M}^{\prime}%
M_{i}\rightarrow T_{M}^{\prime}M_{j}$ is the natural inclusion (cf. Proof of Pont(1)).

Now from the relation $\bar{\tau}_{i}=\bar{\tau}_{j}\circ\tau_{ji}$, for $j\geq
i$, we obtain for all $(x,\xi_{i})\in T^{\prime}_{M}M_{i}$:
\[
{T}^{\prime}\bar{\tau}_{j} \circ T_{ji}(x,\xi_{i})={T}^{\prime}\bar{\tau}%
_{i}\circ T^{*}\tau_{ji}(x,\xi_{i})
\]

Now, for $(x,\xi)\in T^{\prime}M$, there exists an integer $i\in\mathbb{N}$
such that $(x,\xi)$ belongs to $T_{M}^{\prime}M_{i}$; so $(x,\xi)$ also
belongs to $T_{M}^{\prime}M_{j}$ for $j>i$ and we obtain
\[
{T}^{\prime}\bar{\tau}_{j}(x,\xi)={T}^{\prime}\bar{\tau}_{i}\circ T^{*}%
\tau_{ji}(x,\xi).
\]

Finally, from the assumption (ii) of Point (2) in Definition
\ref{D_DirectSequencePartialPoissonBanachManifolds}, by induction on $j>i$, we
get the following commutative diagram:
\[%
\begin{matrix}
& T_{i} &  & {T}^{\prime}\bar{\tau}_{i} &  & P_{i} & \cr T^{\prime}M &
\longrightarrow & T_{M}^{\prime}M_{i} & \longrightarrow & T^{\prime}M_{i} &
\longrightarrow & TM_{i}\cr \;\;\;\;\Big\downarrow\;Id &  &
\;\;\;\;\Big\downarrow\; T_{ji} &  & \;\;\;\;\;\;\;\Big\downarrow\;T^{*}%
\tau_{ji} &  & \;\;\;\;\;\Big\uparrow\;T\tau_{ji}\cr T^{\prime}M &
\longrightarrow & T_{M}^{\prime}M_{j} & \longrightarrow & T^{\prime}M_{j} &
\longrightarrow & TM_{j}\cr & T_{j} &  & {T}^{\prime}\bar{\tau}_{j} &  & P_{j}
& \cr
\end{matrix}
\]
We set $\bar{P}_{i} (x,\xi)=P_{i}(\bar{\tau}_{i}(x), \bar{T}^{\prime}\bar
{\tau}_{i}\circ T_{i}(\xi))$. Note that $\bar{P}_{i}$ is a bundle morphism
from $T^{\prime}M$ into $TM_{i}$ over $\bar{\tau}_{i}$. According to the
previous commutative diagram, we obtain
\[
\bar{P}_{j}(x,\xi)=T\tau_{ji}\circ\bar{P}_{i}(x,\xi)
\]
Since $TM=\underleftarrow{\lim}(TM_{i})$ we get bundle morphism
$P=\underleftarrow{\lim}(\bar{P}_{i}):T^{\prime}M\rightarrow TM$. In
particular we have $T\bar{\tau}_{i}\circ P=\bar{P}_{i}$. Now we can remark
that $\bar{T^{\prime}}\bar{\tau}_{i}=T_{i}\circ{T}^{\prime}\bar{\tau}_{i}$ is
a bundle morphism over $\bar{\tau}_{i}$ such that $\bar{T^{\prime}}\bar{\tau
}_{i}(x,\xi)=(x_{i},\xi_{i})$ if $x=\underleftarrow{\lim}(x_{i})$ and
$\xi=\underrightarrow{\lim}(\xi_{i})$ and also $\bar{P}_{i}=P_{i}\circ
\bar{T^{\prime}}\bar{\tau}_{i}$.\newline 
It remains to prove that $P$ is skew-symmetric. Since $P_{i}$ is skew-symmetric relatively the canonical
pairing $<\;,\;>_{i}$ between $T^{*}M_{i}$ and $TM_{i}$ we have
\begin{align}
\label{skewsym}<\bar{T^{\prime}}\bar{\tau}_{i}(x,\eta),\bar{P}_{i}(x,\xi
)>_{i}=-<\bar{T^{\prime}}\bar{\tau}_{i}(x,\xi),\bar{P}_{i}(x,\eta)>_{i}%
\end{align}

If $<\;,\;> $ denotes the canonical pairing between $T^{*}M$ and $TM$, for any
kinematic differential form $\eta=\underrightarrow{\lim}(\eta_{i})$ and
$\xi=\underrightarrow{\lim}(\xi_{i})$ on $M$ which are sections of $p^{\prime
}:T^{\prime}M\rightarrow M$. Note in fact, $\eta=\bar{\tau}_{i}^{*}\eta_{i}$
and $\xi=\bar{\tau}_{i}^{*}\xi_{i}$, for any $i\in\mathbb{N}$. There exists an
integer $i$ such that $\eta$ and $\xi$ are sections of $T_{M}^{\prime}M_{i}$.
Therefore
\[
<\eta,P(\xi)>=<\bar{\tau}_{i}^{*}\eta_{i}(P(x,\xi))>= \eta_{i}(T\bar{\tau}%
_{i}\circ P(x,\xi))=\eta_{i}(\bar{P}_{i}(x,\xi))=<\bar{T^{\prime}}\bar{\tau
}_{i}(x,\eta),\bar{P}_{i}(x,\xi)>_{i}%
\]
The Equation (\ref{skewsym}) implies that $P$ is skew-symmetric.





\end{proof}

\textit{We now give the sketch of  the Proof of Theorem
\ref{T_PPSOnDirectInversePoissonBanachManifolds} in the case of inverse limits.}
\medskip

According to Proposition \ref{P_inversesequence}, to the
skew-symmetric morphism $P:T^{\prime}M\rightarrow TM$ is associated the
algebra $\mathcal{A}(M)$ of smooth functions $f:M\rightarrow
\mathbb{R}$ whose differential $df$ is a section of $T^{\prime}M\rightarrow
M$. Denote by $\{\;,\;\}_{P}$ the associated almost bracket. 
Now, if
$f=\underleftarrow{\lim}(f_{i})$ and $g=\underleftarrow{\lim}(g_{i})$ then $f=f_{i}\circ\bar{\tau}_{i}$ and
$g=g_{i}\circ\bar{\tau}_{i}$ and so $df=T^{\ast}\bar{\tau} _{i}(df_{i})$ and $dg=T^{\ast}\bar{\tau} _{i}(dg_{i})$.
Using the relation of compatibility 
\[
P_i=T\tau_{ji} \circ P_j \circ T^\ast\tau_{ji}
\]
and the relations $df_j=T^\ast \tau_{ji}(df_i)$, $dg_j=T^\ast \tau_{ji}(dg_i)$, $\{f_i,g_i\}_{P_i}=df_i(P_i(dg_i))$, we obtain (cf.  Remark \ref{R_Poissonmapmorphism})
\[
\{f_j,g_j\}_{P_j}=\{f_i,g_i\}_{P_i} \circ \tau_{ji}.
\]
We also have:
\begin{equation}
\{f_i \circ \bar{\tau} _{i} ,g_i \circ \bar{\tau} _{i}\}_{P}=\{f_i ,g_i\}_{P_i} \circ \bar{\tau} _{i}.
\label{tauPmorphism}
\end{equation} 
Thus $\{f,g\}_{P}=\underleftarrow{\lim}\{f_{i} ,g_{i}\}_{P_{i}}$ is defined on the subalgebra $\mathcal{A}_P(M)=\underleftarrow{\lim}\mathcal{A}(M_i)$ of $\mathcal{A}(M)$.

\noindent 
Now, as each Poisson bracket $\{\;,\;\}_{P_{i}}$ satisfies the Jacobi identity, the same is true for $\{\;,\;\}_{P}$  on $ \mathcal{A}
_{P}(M)$ and so $(M,\mathcal{A}
_{P}(M),\{\;,\;\}_{P})$ is a partial Poisson structure. Finally, Equation (\ref{tauPmorphism}) means that $\bar{\tau}_{i}$ is a Poisson map.
This ends the proof of Theorem \ref{T_PPSOnDirectInversePoissonBanachManifolds} in the
case of inverse limits.

\section{Existence of almost symplectic foliation for direct limit partial Poisson Banach manifolds}\label{_ExistenceAlmostSymplecticFoliation}

\textit{Before proving a result of the same type as Theorem
\ref{T_PropertiesFoliationPartialBanachPoissonManifold} for a direct sequence of partial Poisson Banach
manifolds, we need preliminaries on partial Banach Poisson manifolds.}\\

Let $\pi:E\rightarrow M$ be a Banach bundle. Classically, a \textbf{Koszul connection} on $E$ is a $\mathbb{R}$-bilinear map $\nabla:{\Gamma}%
(TM)\times\Gamma{(E})\rightarrow\Gamma{(E})$ which, for any function $\phi$ on $M$, $X\in\Gamma(M)$ and $\sigma\in\Gamma
(E)$, fulfils the following properties:
\[
\nabla_{X}\left(  \phi\sigma\right)  =d\phi(X)\sigma+\phi\nabla_{X}\sigma
\]
\[
\nabla_{\phi X}\sigma=\phi\nabla_{X}\sigma.
\]
\noindent
\textit{Unfortunately, in general, a Koszul connection may be not localizable in the following sense}: \\ 
Since any local section of $E$ (resp. any local vector field on $M$) cannot be always extended to a global section of $E$ (resp. to a global vector field on $M$), the previous operator
$\nabla$ cannot always induce a (local) operator $\nabla^{U}:{\Gamma}(TM_{|
u})\times\Gamma(E_{| U})\rightarrow\Gamma{(E}_{| U})$. Therefore, in this
work, a \emph{Koszul connection} will always assumed to be localizable in
this sense (For more details see \cite{CabPel2} section 5.2).

Now consider a direct sequence of partial Poisson Banach manifolds we
have:\newline

\begin{theorem}
\label{T_AlmostSymplecticStructureOnDirectLimit} 
Let $\{(M_{i},\mathcal{A}(M_{i} ),\{\;,\;\}_{P_{i}})\}_{i\in\mathbb{N}}$ be a direct sequence of partial Poisson Banach manifolds. Assume that, for each $i\in\mathbb{N}$, the following
assumptions are satisfied:

\begin{enumerate}
\item[(1)] There exists a Koszul connection on each $T^{\prime}M_{i}$;

\item[(2)] Over each point $x\in M_{i}$ the kernel of $P_{i}$ is supplemented
in the fiber ${p^{\prime}_{i}}^{-1}(x)$ and the distribution $P(T^{\prime
}M_{i})$ is closed;

\item[(3)] There exists $j_{i}\geq i$ such that, for any $x\in M$, we have
$P_{j}(T^{\prime}_{x}M_{j})= P_{j_{i}}(T^{\prime}_{x}M_{j_{i}})$ for $j\geq
j_{i}$.
\end{enumerate}

Then we have:

\begin{enumerate}
\item Each distribution $P_{i} (T^{\prime}M_{i})$ on $M_{i}$ is integrable and
the direct limit $\Delta=\underrightarrow{\lim}P_{i}(T^{\prime}M_{i})$ is also
an integrable distribution on $M=\underrightarrow{\lim}(M_{i})$.

\item For any $x=\underrightarrow{\lim}(x_{i})$, the maximal leaf though $x$
is a weak convenient manifold of $M$ and there exists a leaf $N_{i}$ of $P_{i}
(T^{\prime}M_{i})$ in $M_{i}$ through $x_{i}$, such that the sequence $(N_{i}
)_{i\in\mathbb{N}^{\ast}}$ is an ascending sequence of Banach manifolds whose
direct limit $N=\underrightarrow{\lim}(N_{i})$ is an integral manifold of
$\Delta$ though $x$.

\item The natural almost symplectic structure $(N,\omega_{N})$ on a leaf $N$
is such that
\[
\omega_{N}=\underleftarrow{\lim}(\omega_{N_{i}})
\]

\end{enumerate}
\end{theorem}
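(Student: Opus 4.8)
The plan is to bootstrap from the Banach case: first establish integrability and the almost symplectic foliation on each $M_i$ by invoking Theorem \ref{T_PropertiesFoliationPartialBanachPoissonManifold}, and then glue the leaves across the sequence using the compatibility conditions of Definition \ref{D_DirectSequencePartialPoissonBanachManifolds} together with the stabilization hypothesis (3). Concretely, for fixed $i$, assumption (2) says that $\ker P_i$ is supplemented in every fibre and that $\Delta^i:=P_i(T'M_i)$ is closed, so Theorem \ref{T_PropertiesFoliationPartialBanachPoissonManifold} (via Corollary \ref{C_IntegrabilityImageAnchor} and Theorem \ref{T_IntegrableDistribution}) applies and yields that $\Delta^i$ is integrable and that its foliation is almost symplectic on $M_i$. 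The role of the Koszul connection of assumption (1) is to provide, at each level, a localizable description of the almost bracket $[\;,\;]_{P_i}$ on the generating set $\widehat{\mathcal M}_i$, so that the smooth field of endomorphisms required by the condition (LB) is obtained in a form compatible from one level to the next; this is what will later let the local integral manifolds at consecutive levels be chosen coherently.

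For the limit distribution I would exploit that, by assumption (ii) of Definition \ref{D_DirectSequencePartialPoissonBanachManifolds} and the identity $P_{i+1}=T\epsilon_i\circ P_i\circ T^{\ast}\epsilon_i$, every vector $P(x,\xi)$ is already represented at a finite level and the fibres $\Delta^i_x=P_i(T'_xM_i)$ form a nested family; hypothesis (3) forces this family to stabilize, so that $\Delta_x=P(T'_xM)$ coincides with the stabilized closed Banach subspace $\Delta^{j_i}_x\subset T_xM_{j_i}$. I would then apply the slice construction of Proposition \ref{P_Slice} at the stabilized level $j_i$ to produce, around each point, a Banach integral submanifold, and check with Proposition \ref{P_Slice}(2)--(3) that it is a weak injective closed immersion whose tangent space is $\Delta$. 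Compatibility of these slices with the inclusions $\epsilon_i$ (coming from (ii) and the connection-based normalisation of the flows $\phi^{X_u}_t$) shows that the resulting local leaves assemble into an integral manifold of $\Delta=\underrightarrow{\lim}\Delta^i$ on $M$, which gives Point (1).

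For Point (2) the leaf $N$ through $x=\underrightarrow{\lim}(x_i)$ is obtained by taking, at each level, the leaf $N_i\subset M_i$ of $\Delta^i$ through $x_i$ supplied by the Banach foliation, and forming the limit $N=\underrightarrow{\lim}(N_i)$. Here the stabilization hypothesis (3) is decisive: it guarantees that the tangent spaces, and hence the leaves, become constant for $j\ge j_i$, so that $N$ is modelled on a single Banach space and is a weak convenient submanifold of $M$; the direct limit chart machinery of \cite{CabPel2} then identifies $N$ as a genuine integral manifold of $\Delta$. For Point (3) I would use that, on each $N_i$, Theorem \ref{T_PropertiesFoliationPartialBanachPoissonManifold}(2) identifies the leaf form $\omega_{N_i}$ with the form built from $\{\;,\;\}_{P_i}$ through $\Omega_i(\alpha,\beta)=\langle\alpha,P_i(\beta)\rangle$. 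Since $\bar{\epsilon}_i$ is a Poisson morphism and $\{\;,\;\}_P=\underrightarrow{\lim}\{\;,\;\}_{P_i}$ by Theorem \ref{T_PPSOnDirectprojectivePoissonBanachManifolds}, the forms $\omega_{N_i}$ are compatible under the bonding maps, and passing to the limit gives $\omega_N=\underleftarrow{\lim}(\omega_{N_i})$ with $(\omega_N)_x=\omega_x$ for all $x\in N$.

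The main obstacle will be the second half of Point (1): turning the pointwise, algebraic integrability at each finite level into a single integral submanifold of the limit distribution. The delicate step is not the existence of the Banach slices (that is Proposition \ref{P_Slice}) but the proof that the slices at different levels $i$ can be chosen nested and compatible with the inclusions $\epsilon_i$, which is exactly where assumptions (1) and (3) must be combined: the Koszul connections produce flows that intertwine the anchors $P_i$ and $P_{i+1}$ along $\epsilon_i$, while (3) ensures the nesting stops in finite time, so that the limit object is a Banach manifold rather than a merely convenient one with uncontrolled model space. Verifying that the limit of the leaf forms is well defined and nondegenerate in the convenient sense, that is, that $\omega_N^{\flat}$ stays injective, is the remaining technical point, and it should follow from the level-wise injectivity of each $\omega_{N_i}^{\flat}$ together with the closedness of the range of each $T^{\ast}\epsilon_i$.
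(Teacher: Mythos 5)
Your levelwise use of Corollary \ref{C_IntegrabilityImageAnchor} is consistent with the paper, but your picture of how the levels fit together is inverted, and this is where the proposal breaks. By properties (i) and (ii) of Definition \ref{D_DirectSequencePartialPoissonBanachManifolds}, $P_{j}=T\epsilon_{ji}\circ P_{i}\circ T^{\ast}\epsilon_{ji}$ and $T^{\ast}\epsilon_{ji}(T'M_{j})\subset T'M_{i}$, so over a \emph{fixed} $M_{i}$ the restricted distributions $\Delta_{ji}:=P_{j}(T'_{M_{i}}M_{j})$ form a \emph{decreasing} chain $\Delta_{ii}\supset\Delta_{i+1,i}\supset\cdots$ (this is relation (\ref{Pj|Mi}) in the paper). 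Consequently the trace on $M_{i}$ of the limit distribution is the intersection $\bar\Delta_{i}=\bigcap_{j\geq i}\Delta_{ji}$, and the correct local leaf inside $M_{i}$ is the intersection $\bar N_{i}=\bigcap_{j\geq i}N_{ji}$ of a decreasing sequence of leaves, where $N_{ji}$ is the leaf of $\Delta_{ji}$ on $M_{i}$ obtained by applying Corollary \ref{C_IntegrabilityImageAnchor} to the anchored bundle $(T'_{M_{i}}M_{j},M_{i},P_{j})$ with generating set $\widehat{\mathcal M}_{P_{j}}(M_{i})$. Your construction of $N$ as $\underrightarrow{\lim}$ of the leaves of $\Delta^{i}=P_{i}(T'M_{i})$ through $x_{i}$ gives tangent spaces $P_{i}(T'_{x_{i}}M_{i})$, which in general strictly contain $\Delta_{x_{i}}=P(T'_{x_{i}}M)$; such a union cannot be an integral manifold of $\Delta$ unless stabilization already holds at level $i$. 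Your alternative remark that one could work directly at the stabilized level $j_{i}$ is closer to the truth, but it is not how you actually assemble $N$, and it still leaves open why the resulting slice sits inside $M_{i}$ and carries a well-defined Banach model.

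The second gap is that you correctly identify the hard step (compatibility of the slices across levels) but supply no mechanism for it. The paper resolves it quantitatively: choose norms on $T_{x}M_{j}$ with $\|\cdot\|_{j+1}\leq\|\cdot\|_{j}$ so that $T_{x}\epsilon_{ji}$ and $T^{\ast}_{x}\epsilon_{ji}$ have operator norm at most $1$ and hence the operator norms of the $P_{j}$ are monotone; establish the slice compatibility $\Phi_{j}(\alpha)=\Phi_{i}(T^{\ast}_{x}\epsilon_{ji}(\alpha))$ from $X_{j}(\alpha)=T\epsilon_{ji}\circ X_{i}(T^{\ast}_{x}\epsilon_{ji}(\alpha))$; deduce $r_{j}\geq r_{i}$ and $\delta_{j}\geq\delta_{i}$ so the slice domains do not collapse; and equip $\bar{\mathbb S}_{i}=\bigcap_{j\geq i}T^{\ast}\epsilon_{ji}(\mathbb S_{j})$ with a Banach structure modelling $\bar N_{i}$. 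Hypothesis (3) is what makes the intersection a Banach (rather than merely Fr\'echet) space. Attributing the intertwining of the flows to the Koszul connection is also off target: the connection enters only through localizability of the relevant brackets and sections (cf.\ the discussion preceding the theorem), while the intertwining is a direct consequence of condition (ii). Your sketch of Point (3) is reasonable but, like the paper, leaves the verification implicit.
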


Note that the condition (3) is always satisfied in the following cases:
\begin{enumerate}
\item[(--)] Each manifold $M_{i}$ is finite dimensional;

\item[(--)] Each bundle $T^{\prime}M_{i}$ has a finite dimensional fiber;

\item[(--)] Each morphism $P_{i}$ has finite rank.
\end{enumerate}

Therefore we have:

\begin{corollary}
\label{finitecase} 
Let $\{(M_{i},\mathcal{A}(M_{i} ),\{\;,\;\}_{P_{i}})\}_{i\in\mathbb{N}}$ be a direct sequence of Poisson finite dimensional manifolds. Then all conclusions of Theorem \ref{T_AlmostSymplecticStructureOnDirectLimit} are valid.
\end{corollary}

\begin{proof}
\textit{We will use the notations and partial results of subsection
\ref{DLcase}}\newline 
At first, from Point (1) of Definition \ref{D_DirectSequencePartialPoissonBanachManifolds}, for any $j>i$, we have
\begin{align}
\label{Pj|Mi}\Delta_{ji}=P_{j}((T^{\prime}_{M_{i}}M_{j})\subset P_{i}%
(T^{\prime}M_{i})=\Delta_{i}%
\end{align}
Let $\mathfrak{P}_{M_j}$ be the sheaf  of local sections associated to
the partial Poisson structure $(M_{j},\mathcal{A}(M_j), \{\;,\;\}_{P_{j}})$
as defined in  Proposition \ref{P_PropertiesPartialPoissonManifold}. According to the proof of Theorem
\ref{P_PropertiesPartialPoissonManifold}, $\mathfrak{P}_{M_j}$ is a generating set for sections of the anchored bundle $(T^{\prime}M_{j},M_j, P_{j})$ so the same property is true for the restriction $\mathfrak{P}^j_{M_{i}}$ of $\mathfrak{P}_{M_j}$ to $M_{i}$ for the anchored $(T^{\prime}_{M_{i}}M_{j},M_{i}, {P_{j}})$ since $M_{i}\subset M_{j}$ and according to
(\ref{Pj|Mi}). From the properties of sheaf of Lie brackets $[.,.]_{P_{j}}$ the anchor $ P_j$   gives rise to a Lie morphism on $\mathfrak{P}_{M_j}$  this property  also true for its restriction to $\mathfrak{P}^j_{M_{i}}$, and the kernel
of $P_{j}$ is supplemented in each fiber over each point of $M_{i}\subset
M_{j}$ Therefore, by application of Corollary \ref{C_IntegrabilityImageAnchor}, the distribution
$\Delta_{ji}$ is integrable on $M_{i}$.

For $i$ fixed, on $M_{i}$, we have a decreasing sequence of smooth
distributions\footnote{Recall that a distribution $\Delta^{\prime}$ is
contained in a distribution $\Delta$ on $M$ if, for any $x\in M$, $\Delta^{\prime}_{x}%
\subset\Delta_{x}$ }
\[
\Delta_{i}=\Delta_{ii}\supset\cdots\supset\Delta_{ji}\supset\cdots
\]
and we set $\bar{\Delta}_{i}=\displaystyle\cap_{j>i}\Delta_{ji}$. Note
that since $\epsilon_{ji}:M_{i}\rightarrow M_{j}$ is the inclusion and
$T^{\prime}_{M_{i}}M_{j}\subset T^{*}_{M_{i}}M_{j}$, we have $\epsilon
_{ji}(x)=x$ and $T^{*}_{x}\epsilon_{ji}(\xi)=\xi_{| T_{x}M_{i}}$ for all $x\in
M_{i}$ and $\xi\in T^{\prime}_{x}M_{j}$. Therefore from Hahn-Banach theorem
$T_{x}^{*}\epsilon_{ji}$ is surjective.\newline 
On the one hand, since $T_{x}M_{i}\subset T_{x}M_{i+1}\subset\cdots\subset T_{x}M_{j}\subset\cdots\subset T_{x}M$, we can choose a norm $||\;||_{j}$ on $T_{x}M_{j}$ for all $j\geq i$ such that $||\;||_{j+1}\leq||\;||_{j}$ for all $j\geq i$; In particular, the operator norm of $T_{x}\epsilon_{ji}$ is bounded by $1$. We
then obtain a canonical norm $||\;||_{j}^{*}$ on $T_{x}^{*}M$ and so the
operator norm of $T^{*}_{x}\epsilon_{ji}$ is bounded by $1$ for all $j> i$.
According to Property (ii) of Point (1) in Definition \ref{D_DirectSequencePartialPoissonBanachManifolds}, all these considerations imply that the operator norm of $P_{j}$ is bounded by the operator norm of
$P_{i}$ for $j\geq i$.

On the other hand, if we consider the Banach quotient space $T_{x}^{\prime
}M_{j}/\ker(P_{j})_{x}$, according to Proposition \ref{P_SymplecticFoliation}, each vector space $(\Delta_{ji})_{x}$ has its own Banach space structure
which is isomorphic to $T_{x}^{\prime}M_{j}/\ker(P_{j})_{x}$ and so the space
$(\bar{\Delta}_{i})_{x}$ is provided with a Fr\'echet structure induced by the
sequence of Banach spaces $(\Delta_{ji})_{x}$. In
fact, $(\bar{\Delta}_{i})_{x}$ is a Banach space.

Fix some $x_{0}\in M_{i}$ and denote by $N_{ji}$ the maximal leaf of
$\Delta_{ji}$ through $x_{0}$. We then have the following sequence of (weak)
Banach submanifolds modelled on the previous Banach structure on $(\Delta
_{ji})_{x_{0}}$ and we have
\[
N_{ii} \supset\cdots\supset N_{ji}\supset\cdots
\]
Set $\bar{N}_{i}=\displaystyle\cap_{j\geq i}N_{ji}$. We will show that
$\bar{N}_{i}$ is a Banach manifold modelled on the Banach space $(\bar{\Delta
}_{i})_{x}$. \newline

Fix $x\in\bar{N}_{i}\subset N_{ji}$. Note that if $(\bar{\Delta}_{i}%
)_{x}=\{0\}$ then $\bar{N}_{i}=\{x\}$ and we have nothing to prove. From now on,
we assume that dim$(\Delta_{ji})_{x}>0$.

Consider a chart $(U=\underrightarrow{\lim}(U_{j} ),\phi=\underrightarrow
{\lim}(\phi_{i}))$ around $x$ in $M$ which satisfies Property (iii) of point
(1) of Definition \ref{D_DirectSequencePartialPoissonBanachManifolds}. Recall
that for each $j$, if $V_{j}=\phi_{j}(U_{j})$, we have a trivialization
$T^{*}\phi_{j}^{-1}:V_{j}\times\mathbb{F}_{j}\rightarrow T_{U_{j}}^{\prime
}M_{j}$. Since $U_{i}\subset U_{j}$ for $i\leq j$, we get a trivialization
$\Theta_{j}:U_{i}\times\mathbb{F}_{j}\rightarrow T_{U_{i}}^{\prime}M_{j}$.
Now, as $\widehat{\mathcal{M}}_{P_{j}}(M_{i})$ is a generating set for the
anchored bundle $(T^{\prime}_{M_{i}}M_{j},M_{i}, P_{j})$ and $\ker(P_{j})_{x}$
is supplemented, if we have $T^{\prime}_{x}M_{j}=\ker(P_{i})_{x}%
\oplus\mathbb{S}_{j}$ then the vector fields $X_{j}(\alpha)=P_{j}\circ
\Theta_{j}(\;, \alpha)$ belong to $\mathfrak{P}_{M_{i}}$ for
all $\alpha\in T^{\prime}_{x}M_{j}$. Now according to Proposition \ref{P_Slice}, there exists a ball $B_{j}(0,r_{j})$ in $\mathbb{S}_{j}$ such that
\begin{description}
\item[--]
the map $\Phi_{j}(\alpha)= \phi^{X_{j}(\alpha)}_{1}(x)$ is defined for
$\alpha\in B_{j}(0,r_{j})$;
\item[--]
there exists $0<\delta_{j}\leq r_{j}$ such that $\Phi_{j}:B_{j}(0,\delta_{j})
\rightarrow M_{i}$ is a weak injective immersion;
\item[--]
$\Phi_{j}(B_{j}(0,\delta_{j}))$ is an integral manifold of $\Delta_{ji}$
through $x$.
\end{description}

\noindent Note that, in particular, $\Phi_{j}(B(0_{j},\delta_{j}))$ is an open
set in $N_{ji}$. Now from the previous choice of $\{(U_{j},\phi_{j})\}_{j\in\mathbb{N}}$, on $U_{i}$, for all $\alpha\in\mathbb{S}_{j}$, we have:
\[
T^{*}\epsilon_{ji}\circ\Theta_{j}(\;,\alpha)=\Theta_{i}(\;,T^{*}_{x}%
\epsilon_{ji}(\alpha))
\] 
and so we get
\[
X_{j}(\alpha)=P_{j}\circ\Theta_{j}(\;,\alpha)=T\epsilon_{ji}\circ P_{i} \circ
T^{*}\epsilon_{ji}(\Theta_{j}(\;,\alpha))=T\epsilon_{ji}\circ X_{i}({T_{x}%
^{*}\epsilon_{ji}(\alpha)}).
\]
Therefore $X_{j}(\alpha)$ is tangent to $\Delta_{ji}$ and so we have:
\begin{align}
\label{phjphi}
\forall\alpha\in B_{j}(0,r_{j})\subset\mathbb{S}_{j},\;\;\Phi_{j}(\alpha)=\Phi_{i}(T_{x}^{*}\epsilon_{ji}(\alpha))
\end{align}
But we have $T^{*}_{x}\epsilon_{ji}(T^{\prime}_{x} M_{j})\subset
T^{\prime}_{x}M_{i}$ and $T_{0}\Phi_{j}$ is an isomorphism from $\mathbb{S}_{j}$ onto $(\Delta_{ji})_{x}$. According to (\ref{phjphi}), it follows that
${T_{x}^{*}\epsilon_{ji}}_{| \mathbb{S}_{j}}$ is a continuous injective linear
map into $\mathbb{S}_{i}$. Again according to (\ref{phjphi}), we deduce that
$r_{j}\geq r_{i}$ and $\delta_{j}\geq\delta_{i}$. For $j\geq i$, we set
$\mathbb{S}_{ji}=T^{*}\epsilon_{ji}(\mathbb{S}_{j})\subset\mathbb{S}_{i}$,
$\bar{B}_{ji}(0,\delta_{i})=T^{*}\epsilon_{ji}(B_{j}(0,\delta_{j}))\cap
B_{i}(0,\delta_{i})$, $W_{j}=\Phi_{j}(B(0,\delta_{j})$ and $W_{ji}=W_{j}\cap
W_{i}$. Then from the previous considerations, $\mathbb{S}_{ji}$ is isomorphic
to $(\Delta_{ji})_{x}$, $W_{ji}$ is an open set in $N_{ji}$ around $x$ and
$(W_{ji},({\Phi_{i}}_{| B_{ji}(0,\delta_{i})})^{-1}) $ is a chart for $N_{ji}$
around $x$. We equip $\mathbb{S}_{ji}$ with the structure of Banach space such
that ${T_{x}^{*}\epsilon_{ji}}_{| \mathbb{S}_{j}}$ is an isometry when we put
on $\mathbb{S}_{j}$ the norm $||\;||_{j}^{*}$ induced from the norm
$||\;||^{*}_{j}$ defined previously on $T_{x}^{*}M$. Then $\bar{\mathbb{S}%
}_{i}=\displaystyle\cap_{j\geq i}\mathbb{S}_{ji}$ is then provided with a
Banach structure. Then, from our previous construction, according to
\cite{Gal}, we obtain a Banach manifold structure on $\bar{N}_{i}$ modelled on
$\bar{\mathbb{S}}_{i}$.
\end{proof}


\begin{thebibliography}{9999999}                                                                                          

\bibitem[Ana]{Ana}
M. Anastasiei, 
\textit{Banach Lie algebroids},
arXiv:1003.1263 [math.DG].

\bibitem[BGT]{BGT}
D. Belti\c t\u a, T. Goli\'nski, A-B. Tumpach,  \textit{Queer Poisson brackets} arxiv.org/abs/1710.03057v1.

\bibitem[CabPel1]{CabPel1}
P. Cabau, F. Pelletier, 
\textit{Almost Lie
structures on an anchored Banach bundle}, Journal of Geometry and Physics 62
(2012) 2147--2169.

\bibitem[CabPel2]{CabPel2}
P. Cabau, F. Pelletier, 
\textit{Integrability on
Direct Limits of Banach Manifolds}, arXiv:1408.3715 [math.DG], \`{a} para\^{i}tre dans Annales de la Facult\'e des Sciences de Toulouse.

\bibitem[DGV]{DGV}
C.T.J. Dodson, G. Galanis, E. Vassiliou,  \textit{Geometry in
a Fr\'{e}chet Context: A projective Limit Approach}, London Mathematical
Society Lecture Note Series 428. Cambridge University Press, 2015.

\bibitem[DuZu]{DuZu}
J.-P. Dufour, N.T. Zung,
\textit{Linearization of Nambu structures}. Compositio Math. 117 (1999), no. 1, 77--98.

\bibitem[EgeWur]{EgeWur}
M. Egeileh, T. Wurzbacher, 
\textit{Infinite-Dimensional Manifolds as Ring Spaces}, Publications of the
Research Institute for Mathematical Sciences, Volume 53, Issue 1 (2017) 187--209.

\bibitem[FroKri]{FroKri}
A. Fr\"{o}licher, A. Kriegl, 
\textit{Linear Spaces and
Differentiation Theory}, Pure and Applied Mathematics, J. Wiley, Chichester 1988.

\bibitem[Kol]{Kol}
K. Kolev, 
\textit{Poisson brackets in Hydrodynamics},
Discrete and Continuous Dynamical Systems - Series A, American Institute of American Science (AIMS), 2007, 19 (3) 555--574.

\bibitem[Gal]{Gal}
G.-N. Galanis, 
\textit{Limits of Banach vector bundles}
Portugaliae Mathematica, vol 55, Fasc.1 (1998).

\bibitem[KriMic]{KriMic}
A. Kriegel, P.W. Michor, 
\textit{The convenient
Setting of Global Analysis }(AMS Mathematical Surveys and Monographs)
\textbf{53} 1997.

\bibitem[Marl]{Marl}
C.-M. Marle, 
\textit{Lie group actions on a canonical manifold}.
In:  \textit{Symplectic geometry}.
Res. Notes in Math., 80, Pitman, Boston, Mass.-London, 1983, pp.~144--166.

\bibitem[Mars]{Mars}
J.E. Marsden,
\textit{Darboux's theorem fails for weak symplectic forms}.
Proc. Amer. Math. Soc. 32 (1972), 590--592.

\bibitem[NeSaTh]{NeSaTh}
K.-H. Neeb, H. Sahlmann, T. Thiemann, 
\textit{Weak Poisson structures on infinite dimensional manifolds and hamiltonian actions},
arXiv:1402.6818v1 [math.DG].

\bibitem[OdzRat]{OdzRat}
A. Odzijewicza, T. S. Ratiu, 
\textit{Induction for
weak symplectic Banach manifolds}, Journal of Geometry and Physics, 58 (2008) 701--719.

\bibitem[Olv]{Olv}
P.J. Olver,
\textit{Applications of Lie groups to differential equations}.
Second edition. Graduate Texts in Mathematics, 107.
Springer-Verlag, New York, 1993.

\bibitem[Pel]{Pel}
F. Pelletier, 
\textit{Integrability of weak distributions on
Banach manifolds}, Indagationes Mathematicae 23 (2012) 214--242.

\bibitem[Rat]{Rat}
T. S. Ratiu, 
\textit{Coadjoint orbits and the beginnings of
a geometric representation theory} in Developments and trends in infinite-dimensional Lie theory, Pianzola, Neeb Eds., 417--457, Progr. Math.,
288, Birkha\"{u}ser Boston, Inc. Boston, MA, 2011.

\bibitem[Tum]{Tum}
A. B. Tumpach, 
\textit{Banach Poisson Lie Groups and Bruhat Poisson Structure of the Restricted Grassmannian}.
Communications in Mathematical Physics 373(4) (2020) 795--858.

\bibitem[Vai]{Vai}
I. Vaisman, 
\textit{Hamiltonian vector fields on almost
symplectic manifolds}, J.\ Math. Phys. 54, 092902 (2013).

\bibitem[Wei]{Wei}
A. Weinstein, 
\textit{The local structure of Poisson
manifolds}, J. Diff. Geom (1983) 523--557.
\end{thebibliography}
\end{document}